\setlist[enumerate]{topsep=1ex,itemsep=0pt,partopsep=1ex,parsep=1ex}
\setlist[itemize]{topsep=1ex, itemsep=0pt,partopsep=1ex,parsep=1ex}
\newtheorem{theorem}{Theorem}%[section]
\newtheorem{lemma}[theorem]{Lemma}
\newtheorem{corollary}[theorem]{Corollary}
\newtheorem{observation}[theorem]{Observation}
\newtheorem*{claim*}{Claim}
\newtheorem{problem}[theorem]{Problem}
\def\cqedsymbol{\ifmmode$\lrcorner$\else{\unskip\nobreak\hfil
\penalty50\hskip1em\null\nobreak\hfil$\lrcorner$
\parfillskip=0pt\finalhyphendemerits=0\endgraf}\fi}
\newtheorem*{rep@theorem}{\rep@title}
\newcommand{\newreptheorem}[2]{%
\newenvironment{rep#1}[1]{%
 \def\rep@title{#2 \ref{##1}}%
 \begin{rep@theorem}}%
 {\end{rep@theorem}}}
\newtheorem{conjecture}[theorem]{Conjecture}
\newtheorem*{conjecture*}{Conjecture}
\theoremstyle{definition}                    
\theoremstyle{remark}   
\newtheorem*{remark*}{Remark}
\numberwithin{equation}{section}
\newcommand{\mytilde}{\raise.17ex\hbox{$\scriptstyle\mathtt{\sim}$}}
\tikzstyle{vertex}=[circle, draw, fill=black, inner sep=0pt, minimum width=4pt]
\title{Infinite induced-saturated graphs}
\author[1]{Marthe Bonamy}
\author[2]{Carla Groenland\footnote{Research supported by the Dutch Research Council (NWO, VI.Veni.232.073)}}
\author[3]{Tom Johnston}
\author[4]{Natasha Morrison\footnote{Research supported by NSERC Discovery Grant RGPIN-2021-02511}}
\author[5]{Alex Scott\footnote{Research supported by EPSRC grant EP/X013642/1}}
\affil[1]{\small CNRS, LaBRI, Universit\'e de Bordeaux, France.}
\affil[2]{\small Delft Institute of Applied Mathematics, TU Delft, the Netherlands.}
\affil[3]{\small University of Bristol, United Kingdom and Heilbronn Institute of Mathematical Research, United Kingdom.}
\affil[4]{\small Department of Mathematics and Statistics, University of Victoria, Canada.}
\affil[5]{\small Mathematical Institute, University of Oxford, United Kingdom.}
\date{\today}
\begin{document}

\maketitle

\begin{abstract}
A graph $G$ is $H$-induced-saturated if $G$ is $H$-free but deleting any edge or adding any edge creates an induced copy of $H$.
There are non-trivial graphs $H$, such as $P_4$, for which no finite $H$-induced-saturated graph $G$ exists. We show that for every finite graph $H$ that is not a clique or an independent set, there always exists a countable $H$-induced-saturated graph. 
In fact, we show that a far stronger property can be achieved: there is a countably infinite $H$-free graph $G$ such that any graph $G'\ne G$ obtained by making a locally finite set of changes to $G$ contains a copy of $H$.
\end{abstract}

\section{Introduction}\label{sec:intro}
For a finite graph $H$, a (finite or countably infinite) graph $G$ is {\em $H$-free} if it does not contain an induced copy of $H$. 
In this paper, we study how ``unstable'' this property can be in the following sense: 
\begin{center}
    If $G$ is $H$-free, must there be other $H$-free graphs that are ``close to'' $G$? 
\end{center}
One natural definition of ``close'' is to say that two graphs $G$ and $G'$ are close if $G'$ can be obtained from $G$ by \textit{perturbing} a single pair of distinct vertices $u, v\in V(G)$. That is, if $uv$ is an edge in $G$, we delete it, and if it is not an edge, we add it.

A graph $G$ on at least two vertices is called \emph{$H$-induced-saturated} if $G$ is $H$-free but perturbing any pair (i.e.~deleting or adding an edge) creates an induced copy of $H$.
We will show that, provided $H$ is not a clique or an independent set, countably infinite $H$-induced-saturated graphs always exist.  In fact, we will show there are graphs that satisfy a vastly stronger property.
A \emph{locally finite perturbation} of $G$ is a graph obtained from $G$ by perturbing a nonempty set of pairs such that every vertex is in a finite number of perturbed pairs; equivalently, we can consider graphs $G'$ such that $E(G)\triangle E(G')$ is the edge set of a locally finite graph.

Our main result is the following.
\begin{theorem}\label{thm:main}
Let $H$ be a finite graph. If $H$ is not a clique or an independent set, then there exists a countably infinite $H$-free graph $G_H$ such that every locally finite perturbation of $G_H$ has an induced copy of $H$.
\end{theorem}
As a (very) special case of this, we obtain the following statement about induced-saturated graphs.
\begin{corollary}
\label{cor:main_basic}
For any finite graph $H$ which is not a clique or independent set, there is a countably infinite $H$-induced-saturated graph $G_H$.
\end{corollary}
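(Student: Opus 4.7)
The plan is to derive this directly from Theorem \ref{thm:main}. The content of the corollary is strictly weaker than that of the theorem: induced-saturation only asks that each \emph{single-pair} perturbation creates an induced copy of $H$, while Theorem \ref{thm:main} asserts this for the much larger class of locally finite disturbances.

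Concretely, I would let $G_H$ be the countably infinite $H$-free graph provided by Theorem \ref{thm:main}. For any distinct pair $u, v \in V(G_H)$, the graph $G_H'$ obtained from $G_H$ by disturbing only the pair $\{u,v\}$ (that is, deleting the edge $uv$ if it is present and adding it otherwise) is a locally finite disturbance of $G_H$: exactly one pair is disturbed, and every vertex is incident to at most one disturbed pair, which is finite. Theorem \ref{thm:main} therefore guarantees that $G_H'$ contains an induced copy of $H$. Since $G_H$ itself is $H$-free, this is exactly the definition of $H$-induced-saturation.

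There is no real obstacle here; the corollary is simply the specialization of Theorem \ref{thm:main} to perturbations affecting a single pair of vertices.
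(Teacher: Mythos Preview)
Your proposal is correct and matches the paper's treatment: the paper states that Theorem~\ref{thm:main} ``directly implies'' Corollary~\ref{cor:main_basic} without giving any further proof, which is exactly the specialization you describe. There is nothing to add.
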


Our work sits within the broader theme of research into the geometry of classes of countably infinite graphs.
Theorem \ref{thm:main} tells us that, for $H$ not a clique or independent set, the space of $H$-free graphs has points that are isolated in a very strong sense: there is no locally finite perturbation that gives another point in the space.  This tells us that the space is very poorly connected.

Two countably infinite graphs can also be compared by looking for embeddings between them.
A graph  $U$ is called \textit{strongly universal} (or a faithful universal graph) for a graph class $\mathcal{G}$ if $U\in \mathcal{G}$ and $U$ contains all graphs from $\mathcal{G}$ as an induced subgraph.  For the class of all countable graphs, such a strongly universal graph exists (often named after Rado~\cite{Rado1964}).
A question of particular interest here is for which $H$ there exists a strongly universal graph for the class of countably infinite graphs that do not contain $H$ as a subgraph
(see e.g. \cite{CherlineShelah2007,CherlinShelah2016,CherlinTallgren2007JGT,FurediKomjath97Combinatorica,FurediKomjath1997JGT,Henson1971,komjathMeklerPach1988IJM,KomjathPach84Mathematika}). The existence of universal or strongly universal graphs has also been studied for other graph classes, e.g. planar graphs~\cite{diestelKuhn1999JGT,Pach1981_Ulam} or graphs avoiding $K_n$-subdivisions~\cite{Diestel1985}.

Another important direction within infinite structural graph theory is the classification of countable ultrahomogeneous graphs by Lachlan and Woodrow~\cite{Lachlan1980}, which was followed by classifications of homogeneity in other countable discrete structures (see e.g.~\cite{cameron2006homomorphism,cherlin1998classification,habilthesis,classificationconnectedhomogeneousDigraphs,schmerl1979countable}). Similarly, Diestel~\cite{diestel1990survey} classified which countable graphs have a prime decomposition and Thomassen~\cite{Thomassen1982duality} proved various classifications for infinite graphs relating to the existence of dual graphs. For surveys on infinite graphs, see e.g.~\cite{Diestel2017survey,Komjath2011Survey,Stein2011survey}.

\paragraph{Finite induced-saturated graphs}
Finite induced-saturated graphs and variations on this have been studied extensively in recent years (see e.g. \cite{AxenovichCsikos19,BehrensErbesSantanaYagerYeager16, ChoChoiParkPaths,DvorakPaths,fan2025halfway,MartinSmith12,RatyP6,Tennenhouse16}). It is immediately clear that there is no (finite or infinite) $H$-induced-saturated graph when $H$ is a clique or independent set with more than two vertices.\footnote{If $H=K_t$ where $t\ge3$, then no $H$-induced-saturated graph can contain an edge (as $H$-free graphs remain $H$-free under edge deletion) while graphs with no edges also fail to be $H$-induced-saturated (as adding a single edge does not create a copy of $H$). The argument for independent sets is analogous.}
But the picture is more complicated for more general graphs.  For example,
one of the first results on induced-saturated graphs showed that there is no finite $P_4$-induced-saturated graph \cite{MartinSmith12}. 
This result can also be shown by noting that every finite $P_4$-free graph on at least two vertices contains vertices $u, v$ that are twins (that is, they have the same neighbours, except possibly each other). Perturbing the pair $uv$ does not create a copy of $P_4$ in this case, and so the resulting graph is still $P_4$-free.
However, it is straightforward to construct examples of $P_t$-induced-saturated graphs for $t=2,3$ and, after some partial progress by various authors \cite{ChoChoiParkPaths,RatyP6,Tennenhouse16}, Dvořák~\cite{DvorakPaths} showed that $P_t$-induced-saturated graphs exist for all $t\geq 6$.  Shortly afterwards, the authors found examples of $P_5$-induced-saturated graphs using a computer search \cite{bonamy2020induced} (one of which is shown in Figure~\ref{fig:finitesat}).
Thus, a finite $P_t$-induced-saturated graph exists if and only if $t \neq 1,4$.
\begin{figure}
    \centering
\begin{tikzpicture}

\node[vertex] (0) at (0:2) {};
\node[vertex] (1) at (-36:2) {};
\node[vertex] (2) at (36:2) {};
\node[vertex] (3) at (-72:2) {};
\node[vertex] (4) at (72:2) {};
\node[vertex] (5) at (48: 1) {};
\node[vertex] (6) at (180:2) {};
\node[vertex] (7) at (228:1) {};
\node[vertex] (8) at (-108:2) {};
\node[vertex] (9) at (144:2) {};
\node[vertex] (10) at (216:2) {};
\node[vertex] (11) at (108:2) {};

 \path[draw, thick]
(0) edge node {} (1) 
(0) edge node {} (2) 
(0) edge node {} (3) 
(0) edge node {} (4) 
(0) edge node {} (5) 
(0) edge node {} (6) 
(1) edge node {} (2) 
(1) edge node {} (3) 
(1) edge node {} (7) 
(1) edge node {} (8) 
(1) edge node {} (9) 
(2) edge node {} (4) 
(2) edge node {} (7) 
(2) edge node {} (10) 
(2) edge node {} (11) 
(3) edge node {} (5) 
(3) edge node {} (8) 
(3) edge node {} (10) 
(3) edge node {} (11) 
(4) edge node {} (5) 
(4) edge node {} (8) 
(4) edge node {} (9) 
(4) edge node {} (11) 
(5) edge node {} (7) 
(5) edge node {} (9) 
(5) edge node {} (10) 
(6) edge node {} (7) 
(6) edge node {} (8) 
(6) edge node {} (9) 
(6) edge node {} (10) 
(6) edge node {} (11) 
(7) edge node {} (8) 
(7) edge node {} (11) 
(8) edge node {} (10) 
(9) edge node {} (10) 
(9) edge node {} (11);

\end{tikzpicture}
\caption{The complement of the icosahedral graph is induced-saturated for $P_5$.}
    \label{fig:finitesat}
\end{figure}
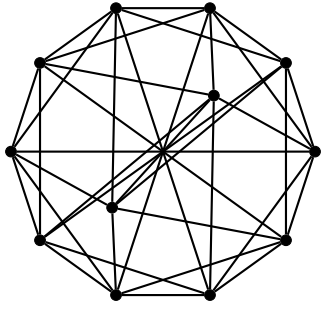

The motivation behind the study of induced-saturated graphs stems from a long history of research concerning ``saturated'' graphs, initiated by Erd\H{o}s, Hajnal and Moon~\cite{EHM}. A graph is said to be $H$-\emph{saturated} if it contains no subgraph isomorphic to $H$ and the addition of any edge creates such a subgraph. Note that, in contrast to the induced setting, removing an edge can never create a subgraph isomorphic to $H$ where one did not exist before. 
It is easy to construct an $H$-saturated graph: start from an empty graph and keep adding edges as long as the chosen edge does not create a subgraph isomorphic to $H$. If there are no more edges that can be added, the graph must be $H$-saturated.
While the question of existence is straightforward, determining the minimum and maximum number of edges in an $H$-saturated graph of given size is much more interesting, and it has been extensively studied (see the comprehensive survey of Currie, Faudree, Faudree and Schmitt~\cite{faudree2011survey} for a review of literature on saturation in graphs and related variants). 

\medskip

\paragraph{Paper overview}
The rest of the paper is organized as follows. In the next section, we discuss some of the key ingredients in the proof of Theorem~\ref{thm:main}.
We discuss notation and conventions in Section~\ref{notation}, and give the proof of Theorem~\ref{thm:main} in the following sections. We conclude with a discussion of directions for further work.

\section{Some proof ideas}\label{informal}

In this section we discuss some of the ideas from the proof of Theorem~\ref{thm:main}.
We first describe how one might hope to deal with individual perturbations in the context of Corollary~\ref{cor:main_basic} before outlining how this can be adapted to handle locally finite perturbations.  
The proof of Theorem~\ref{thm:main} is given in later sections, which do not rely on the informal discussion in this section.

\subsection{\texorpdfstring{$P_4$}{P\textunderscore4} and fixing operations}\label{subsec:P4}
The case of $P_4$-free graphs is particularly interesting as there is no finite $P_4$-induced-saturated graph. 

We find an infinite $P_4$-induced-saturated graph $G$ by considering a sequence $(G_i)_{i\in \mathbb{N}}$ of graphs and taking its limit. 
We define the sequence $(G_i)$ inductively.  Throughout the construction, we
keep a list of pairs which are ``bad'', in the sense that perturbing them does not create a copy of $P_4$. We obtain $G_{i+1}$ from $G_i$ by attaching a gadget that ``fixes'' the next bad pair in the list without creating a copy of $P_4$. We then update the list by adding the bad pairs involving a new vertex to the end of the list. 

When there is a simple fixing operation for $H$, this gives us a straightforward recipe for proving Corollary~\ref{cor:main_basic}. 
In the case of $P_4$, the construction is as follows.  Given a bad pair $\{x,y\}$, we create duplicates (or ``twins'') $x'$ and $y'$ of $x$ and $y$.   
Since $P_4$ has no twins, adding a twin can never create an induced copy of $P_4$. Moreover, it is not hard to see that in the resulting graph perturbing the pair $\{x,y\}$ creates a  copy of $P_4$. The sequence $(G_i)_{i\in \mathbb{N}}$ of graphs is obtained by considering an arbitrary $P_4$-free graph, computing a list of its bad pairs, then obtaining $G_{i+1}$ from $G_i$ by fixing its first bad pair and updating the list of bad pairs.

We give an example in Figure \ref{fig:p4}: we begin with a graph $G_1$ with a single bad pair $\{u_1,u_2\}$, and fix it to obtain $G_2$ with two bad pairs $\{\{u_1,v_1\},\{u_2,v_2\}\}$.  Fixing $\{u_1, v_1\}$ creates a graph $G_3$ with three bad pairs, with $\{u_2,v_2\}$ at the front of the list. 

\begin{figure}[ht!]
    \centering
 \begin{tikzpicture}[scale=1]
        \draw (0,0) node(a)[style=vertex]{};
        \draw (0,0) node[left]{$u_2$};
        \draw (0,1.5) node(b)[style=vertex]{};
        \draw (0,1.5) node[left]{$u_1$};

        \draw[red] (a) -- (b);

        \draw (3,0) node(c)[style=vertex]{};
        \draw (3,0) node[left]{$u_2$};
        \draw (3,1.5) node(d)[style=vertex]{};
        \draw (3,1.5) node[left]{$u_1$};
        \draw (4.5,0) node(e)[style=vertex]{};
        \draw (4.5,0) node[right]{$v_2$};
        \draw (4.5,1.5) node(f)[style=vertex]{};
        \draw (4.5,1.5) node[right]{$v_1$}; 

        \draw[dashed, red] (d) -- (f);
        \draw[dashed, red] (c) -- (e);
        \draw (d) -- (e);
        \draw (d) -- (c);
        \draw (c) -- (f);
        \draw (f) -- (e);

        \draw (7.5,0) node(u2)[style=vertex]{};
        \draw (7.5,0) node[left]{$u_2$};
        \draw (7.5,1.5) node(u1)[style=vertex]{};
        \draw (7.5,1.5) node[left]{$u_1$};
        \draw (9,0) node(v2)[style=vertex]{};
        \draw (9,0) node[right]{$v_2$};
        \draw (9,1.5) node(v1)[style=vertex]{};
        \draw (9,1.5) node[right]{$v_1$}; 
        \draw (9.5,0.75) node(x1)[style=vertex]{};
        \draw (9.5,0.75) node[right]{$x_1$}; 
        \draw (8.25,2) node(w1)[style=vertex]{};
        \draw (8.25,2) node[right]{$w_1$}; 

        \draw[dashed, red] (u2) -- (v2);
        \draw[red] (u1) -- (w1);
        \draw[red] (v1) -- (x1);
        \draw (u1) -- (u2);
        \draw (u1) -- (v2);
        \draw (w1) -- (v2);
        \draw (x1) -- (v2);
        \draw (x1) -- (u2);
        \draw (u2) -- (v1);
        \draw (w1) -- (u2);
        \draw (v1) -- (v2);

\end{tikzpicture}
        \caption{A sequence of graphs $G_1,G_2,G_3,\dots$ is obtained by repeatedly applying a fixing operation. The red edges and red non-edges are not fixed.}
    \label{fig:p4}
\end{figure}
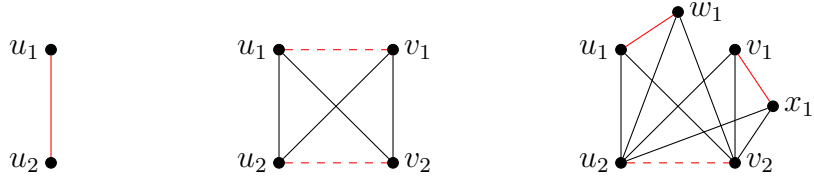

 We note that, for any $i$, and for any bad pair $\{x,y\}$ of a graph $G_i$, there is a (later) index $j$ such that perturbing $\{x,y\}$ in $G_j$ or any subsequent graph yields an induced $P_4$. In other words, any bad pair is ultimately fixed. Therefore, there is a countably infinite $P_4$-free graph $G_{P_4}$ such that perturbing any pair in $G_{P_4}$ results in an induced copy of $P_4$. That is, Corollary~\ref{cor:main_basic} is true for $H=P_4$ and, more generally, for any graph $H$ that admits fixing operations for bad pairs.

\subsection{\texorpdfstring{$C_5$}{C\textunderscore5} and gatekeepers}\label{subsec:C5}
The fixing operations for $P_4$ defined above rely heavily on the strong structural properties of $P_4$-free graphs. Since we need to prove Corollary~\ref{cor:main_basic} for every graph $H$ that is not a clique or independent set, we will need something much more general. The naïve way to define a fixing operation for $H$ is by gluing a copy of $H-e$ with the missing edge aligned with a bad non-edge (or a copy of $H+e$ with the extra edge aligned with a bad edge). While this guaranteea that perturbing the bad pair will result in an induced copy of $H$, the pitfall is that the gluing might create a copy of $H$. One may hope to avoid this by choosing carefully which edge to add or delete in $H$, but this may not always be possible (consider $H = P_4$). 

However, for $H=C_5$, this \emph{is} a good approach (see Figure \ref{fig:c5}). To fix a bad non-edge $xy$, we may glue $C_5-e$ by adding three vertices $u_1,u_2,u_3$ and four edges $xu_1,u_1u_2,u_2u_3,u_3y$. Note that the resulting graph is $C_5$-free, as every induced copy of $C_5$ involving some new edge would involve all of them. Similarly, to fix a bad edge $xy$, we may glue $C_5+e$ by adding three vertices $u_1,u_2,u_3$ and five edges $xu_1,u_1y,xu_2,u_2u_3,u_3y$. It is not hard to see that the resulting graph is also $C_5$-free. 
\begin{figure}[ht!]
    \centering
    \begin{tikzpicture}
        \draw [draw=black] (0,0) rectangle (2,2);
        \draw (1.5,0.5) node(a)[style=vertex]{};
        \draw (1.5, 0.5) node[left]{$y$};
        \draw (1.5,1.5) node(b)[style=vertex]{};
        \draw (1.5, 1.5) node[left]{$x$};
        \draw[blue] (2.5,0.5) node(c)[style=vertex, fill=blue]{};
        \draw (2.5, 0.5) node[below]{$u_3$};
        \draw[blue] (2.5,1.5) node(d)[style=vertex, fill=blue]{};
        \draw (2.5, 1.5) node[above]{$u_1$};
        \draw[blue] (3, 1) node(e)[style=vertex, fill=blue]{};
        \draw (3, 1) node[right]{$u_2$};
        
        \draw[dashed] (a)  -- (b);
        \draw[blue] (a)  -- (c);
        \draw[blue] (b)  -- (d);
        \draw[blue] (c)  -- (e);
        \draw[blue] (d)  -- (e);

        \draw [draw=black] (6,0) rectangle (8,2);
        \draw (7.5,0.5) node(a)[style=vertex]{};
        \draw (7.5, 0.5) node[left]{$y$};
        \draw (7.5,1.5) node(b)[style=vertex]{};
        \draw (7.5, 1.5) node[left]{$x$};
        \draw[blue] (8.5,0.5) node(c)[style=vertex, fill=blue]{};
        \draw (8.5, 0.5) node[below]{$u_3$};
        \draw[blue] (8.5,1.5) node(d)[style=vertex, fill=blue]{};
        \draw (8.5, 1.5) node[above]{$u_1$};
        \draw[blue] (9, 1) node(e)[style=vertex, fill=blue]{};
        \draw (9, 1) node[right]{$u_2$};

        \draw (a)  -- (b);
        \draw[blue] (a)  -- (c);
        \draw[blue] (a)  -- (d);
        \draw[blue] (b)  -- (e);
        \draw[blue] (b)  -- (d);
        \draw[blue] (c)  -- (e);

    \end{tikzpicture}
     \caption{The fixing operation for non-edges (left) and edges (right) is depicted. The blue vertices represent newly created vertices.}
     \label{fig:c5}
 \end{figure}
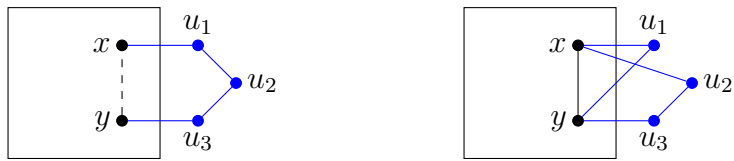
In fact, this idea works for many graphs, including every $3$-connected graph $H$ that is not a clique. Indeed, if we ``fix'' the pair $\{x,y\}$ in a similar fashion to Figure~\ref{fig:c5}, then $\{x,y\}$ forms a 2-cut in the resulting graph between the ``old'' and ``new'' vertices. If $H$ is 3-connected, then it cannot contain vertices of both types so no new copies of $H$ can be created.
This proves Corollary~\ref{cor:main_basic} for any $3$-connected graph $H$.

We now formalise what makes $C_5$ and 3-connected graphs (but not $P_4$) behave well.
An edge $uv$ in a graph $H$ is a \emph{gatekeeper} if for any $H$-free graph $G$, gluing a copy of $H - uv$ on a non-edge of $G$ results in an $H$-free graph. A similar definition holds for non-edges.
We say $H$ admits a gatekeeper \emph{of each type} if it contains both an edge and a non-edge that are gatekeepers. 

A graph $H$ that admits a gatekeeper of each type admits a fixing operating for bad edges as well as for bad non-edges, using the naïve approach described above.
Note that a path contains no edge that is a gatekeeper. 

\subsection{\texorpdfstring{$C_5$}{C\textunderscore5} with a leaf and cores}\label{subsec:cores}
Unfortunately, even small modifications to a graph can make a graph which is much harder to handle. For example, consider the graph obtained from $C_5$ by adding a pendant edge to one of the vertices. 
While we easily found fixing operations for $C_5$ using gatekeepers, this new graph does not admit gatekeepers of either type. 
Instead, we focus on the $C_5$ and work within the class of $C_5$-free graphs.
If we work within this class, we can use the gatekeepers for $C_5$ and not create a copy of $C_5$ or of our target graph. 
These gatekeepers are not quite enough to guarantee a perturbation creates a copy of $C_5$ plus a leaf, but this is easily solved by adding a leaf to every vertex we add.
Note that the leaves cannot be in a $C_5$, so this is a valid modification.
We now formalise and extend this idea using the notion of cores.

The $2$-core $H^*$ of a finite graph $H$ is obtained by iteratively removing vertices of degree at most 1. Crucially, if we take a graph $G$ which is $H^*$-free, then adding a leaf to each vertex of $G$ does not create a copy of $H^*$ let alone $H$, nor does adding an isolated vertex. Given a fixing operation for $H^*$, we can create a fixing operation for $H$ as follows. Throughout the process, we maintain that the graph created so far is $H^*$-free. We interchange fixing operations for $H^*$ with steps that either add a leaf to each vertex or add an isolated vertex. Since every bad pair is eventually fixed by the fixing operation for $H^*$, if we perturb a pair then we can embed a copy of $H^*$. We can extend our copy of $H^*$ into a copy of $H$ as we have added infinitely many leaves and isolated vertices. 

Therefore, if a graph $H^*$ admits fixing operations, any graph $H$ whose $2$-core is $H^*$ admits fixing operations. We can generalise this notion by iteratively deleting any vertex with at most $k$ neighbours or at most $\ell$ non-neighbours. 

In particular, the 3-core $H^*$ of a graph $H$ is obtained by iteratively removing vertices of degree at most 2. By the discussion above, Corollary~\ref{cor:main_basic} holds for a graph $H$ if its 3-core is 3-connected.  
We are also done when the complement $\overline{H}$ of $H$ is 3-connected: the complement of an $\overline{H}$-induced-saturated graph is $H$-induced-saturated. When $H$ has a 2-cut and both ``sides'' contain at least three vertices, then $\overline{H}$ is close to having a 3-connected 3-core. However, there is one ``bad'' 2-cut to take into account, depicted in Figure~\ref{fig:badcut}. 
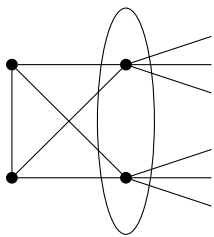
\begin{figure}[ht!]
    \centering
    \begin{tikzpicture}[scale=1.5]
        \draw (1.5,0.5) node(a)[style=vertex]{};
        \draw (1.5,1.5) node(b)[style=vertex]{};
        \draw (0.5,0.5) node(c)[style=vertex]{};
        \draw (0.5,1.5) node(d)[style=vertex]{};
        
        \draw (c)  -- (d);
        \draw (a)  -- (c);
        \draw (b)  -- (d);
        \draw (a)  -- (d);
        \draw (b)  -- (c);

        \draw (1.5,1) ellipse (0.25cm and 1cm);
        \draw (1.5,1.5)   -- (2.25,1.25);
        \draw (1.5,1.5)   -- (2.25,1.5);
        \draw (1.5,1.5)   -- (2.25,1.75);
        \draw (1.5,0.5)   -- (2.25,0.25);
        \draw (1.5,0.5)   -- (2.25,0.5);
        \draw (1.5,0.5)   -- (2.25,0.75);

    \end{tikzpicture}
    \caption{A ``bad'' 2-cut is shown. No such cuts are present in the $3^*$-core.}
    \label{fig:badcut}
\end{figure}

We define the \textit{$3^*$-core} of a graph $H$ as the graph obtained from $H$ by iteratively removing vertices of degree at most $2$, as well as true twins of degree $3$ (``bad'' 2-cuts).
As before, fixing operations for a graph $H^*$ can be extended to any graph $H$ whose $3^*$-core is $H^*$. 
If a graph $H$ has a non-empty $3^*$-core, then either the $3^*$-core of $H$ or the $3^*$-core of $\overline{H}$ is 3-connected. There is a small catch: the $3^*$-core could be a clique and this will give a few more cases to handle.

\subsection{Outline of the proof}\label{sec:combining}
\noindent We can now give a broad overview of the steps in our proof. The proof will be given formally in Section~\ref{subsec:proof}.
\begin{itemize}
    \item We give a fixing operation for all $H$ whose $3^*$-core is 3-connected and not a clique. The reader should already be able to verify this for the weaker Corollary~\ref{cor:main_basic} from the discussion above. We provide the stronger statements needed for Theorem~\ref{thm:main} in Section~\ref{sec:3connected}.
    \item We give a fixing operation for all $H$ whose 2-core is a $K_{1,1,p}$ or $K_{2,p}$ and a direct construction for all forests with a unique vertex of maximum degree in Section \ref{sec:large}.
    \item We prove a structure theorem (Theorem \ref{thm:cases}) that shows that for all finite graphs $H$ on at least 12 vertices, either $H$ or its complement falls into one of the above categories. This is done in Section \ref{sec:charact}.
    \item We provide direct constructions for specific graphs $H$ on at most 7 vertices in Section \ref{sec:small}. 
    \item We use a computer to check that for all graphs $H$ on at most 11 vertices, either $H$ or its complement falls into one of the above categories. This is detailed in Section \ref{sec:computermagic}.
\end{itemize}

\subsection{From individual perturbations to locally finite perturbations}\label{subsec:strongly}
For Corollary~\ref{cor:main_basic}, we only need to generate a copy of $H$ when we make a single perturbation: we either turn an edge into a non-edge or vice versa. For Theorem \ref{thm:main}, we allow many pairs to be perturbed at once, and the arguments above no longer directly apply. 

We describe one way in which we show that a pair has been fixed. After a locally finite perturbation, we want to embed $H$ vertex-by-vertex. Suppose that we already embedded $h_1,\dots,h_s$. If there are infinitely many options from which to choose the next vertex $h_{s+1}$, then there is always an option for which the adjacencies to $h_1,\dots,h_s$ have not been altered by the locally finite perturbation. Indeed, for each $h_i$, there are only finitely many $v$ so that $\{h_i,v\}$ has been altered. One way to generate infinitely many options is to blow up vertices into infinite cliques. This does not always work, since it may create a copy of $H$. Moreover, it also creates new edges to be ``fixed''. Nevertheless, the difference between allowing a single perturbation or allowing any locally finite perturbation is reasonably small in most of our proofs. In some places, it does require additional insights, such as for 3-connected graphs (handled by Lemma~\ref{lem:blowup}). 

There is one small additional point that comes up when we prove our stronger variant. Assume that a graph $H$ admits fixing operations as in Section~\ref{subsec:P4}. For Corollary~\ref{cor:main_basic}, we start from a finite graph, and fixing a pair adds finitely many vertices, hence finitely many new bad pairs. Each graph of the sequence is finite, and new bad pairs can simply be added to the end of the list of existing bad pairs. 
However, in the proof of  Theorem~\ref{thm:main}, we add countably infinitely many vertices to fix a single bad pair, thus possibly infinitely many new bad pairs. Since we add countably many bad pairs for a countable number of times, it is possible to schedule the fixes in a careful order by a standard enumeration argument; this is described in the proof of Lemma~\ref{lem:fix_to_strong}.

\section{Notation and definitions}\label{notation}
The word ``graph'' in this paper allows for infinite graphs, but the vertex set is always required to be countable. When we search for a  (strongly) $H$-induced-saturated graph, $H$ will always refer to a finite graph. We only consider simple graphs (without self-loops, directed edges or parallel edges).

A \textit{clique} is a set of vertices that are pairwise adjacent and a \textit{independent set} is a set of vertices that are pairwise non-adjacent. A connected graph $G=(V,E)$ is \emph{$k$-connected} if the graph has at least $k+1$ vertices
and remains connected after removing any $k-1$ vertices. For graphs $G$ and $H$, we say that $G$ is \textit{$H$-free} if $G$ does not contain $H$ as induced subgraph. 

Write $N(v)$ for the neighbourhood of $v$, that is, the set of vertices adjacent to $v$, and write $N[v]=N(v)\cup \{v\}$ for the closed neighbourhood. 
We say two vertices $u,v$ are \emph{true twins} if $N[u]=N[v]$ and \emph{false twins} if $N(u)=N(v)$ (and so $u$ and $v$ are not adjacent). Write $uv$ to denote a pair of vertices $\{u,v\}$ with $u\neq v$. Write $P_t$ for the path on $t$ vertices and $K_t$ for the complete graph on $t$ vertices.

The definitions below are similar to those in the introduction, but now formally stated in full generality.

\paragraph{Locally finite perturbation}
For a pair of distinct vertices $u,v\in V(G)$, \emph{perturbing} the pair $uv$ is the operation which removes $uv$ from $E(G)$ if it is present and adds it if it is not present. A \emph{locally finite perturbation} of $G$ is a graph obtained from $G$ by perturbing arbitrarily many pairs (at least one, possibly infinitely many) under the constraint that for any vertex $v$, the number of perturbed pairs involving $v$ is finite. 

\paragraph{Strongly saturating}
For a finite graph $H$, say a graph $G$ is \textit{strongly $H$-induced-saturated} if $G$ does not contain $H$ as an induced subgraph, but any locally finite perturbation of $G$ does. Note that the very notion of strongly $H$-induced-saturated requires $G$ to be infinite. We say that $G$ is \textit{$H$-induced-saturated} if $G$ does not contain $H$ as an induced subgraph, but a copy of $H$ is created when an edge is added to $G$ or removed from $G$.

\paragraph{Fixed and unfixed edges}
Say a pair $xy$ in a graph $G$ is \emph{unfixed} (for $H$) if there is a locally finite perturbation which perturbs the pair $xy$ and does not result in an induced copy of $H$. Otherwise we call the pair \textit{fixed} (for $H$). Note that a graph $G$ is strongly $H$-induced-saturated if and only if all pairs of $G$ are fixed for $H$.

\paragraph{Gluing} 
Given two graphs $G$ and $G'$ on disjoint vertex sets with $A\subseteq V(G)$ and $A' \subseteq V(G')$, and a bijection $f:A'\to A$, the graph $G''$ obtained from \textit{gluing} $G'$ on $G$ along $f$ has $V(G'')=V(G)\cup V(G')\setminus A'$ and $uv\in E(G'')$ for distinct $u,v\in V(G'')$ if and only if 
\begin{enumerate}
    \item $uv\in E(G)$,
    \item $uv\in E(G')$,
    \item $u\in A$ and $f^{-1}(u)v\in E(G')$, or
    \item $u,v\in A$ and $f^{-1}(u)f^{-1}(v)\in E(G')$.
\end{enumerate}
We will most commonly apply this operation for $A=\{u,v\}$, $A'=\{u',v'\}$ and $f(u)=u',f(v)=v'$. An example of this is given in Figure~\ref{fig:glue}.
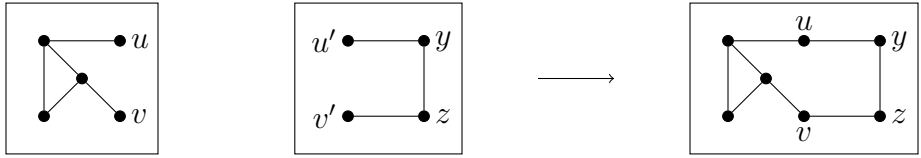
\begin{figure}
   \centering
    \begin{tikzpicture}
        \draw [draw=black] (0,0) rectangle (2,2);
        \draw (1.5,0.5) node(a)[style=vertex]{};
        \draw (1.5, 0.5) node[right]{$v$};
        \draw (1.5,1.5) node(b)[style=vertex]{};
        \draw (1.5, 1.5) node[right]{$u$};
        \draw (0.5,0.5) node(c)[style=vertex]{};
        \draw (0.5,1.5) node(d)[style=vertex]{};
        \draw (1, 1) node(e)[style=vertex]{};
        
        \draw (c)  -- (d);
        \draw (b)  -- (d);
        \draw (c)  -- (e);
        \draw (d)  -- (e);
        \draw (a)  -- (e);

        \draw [draw=black] (3.8,0) rectangle (6,2);
        \draw (5.5,0.5) node(z)[style=vertex]{};
        \draw (5.5, 0.5) node[right]{$z$};
        \draw (5.5,1.5) node(y)[style=vertex]{};
        \draw (5.5, 1.5) node[right]{$y$};
        \draw (4.5,0.5) node(v)[style=vertex]{};
        \draw (4.5, 0.5) node[left]{$v'$};        
        \draw (4.5,1.5) node(u)[style=vertex]{};
        \draw (4.5, 1.5) node[left]{$u'$};        

        \draw (y)  -- (z);
        \draw (v)  -- (z);
        \draw (u)  -- (y);
        \draw[->]        (7,1)   -- (8,1);

        \draw [draw=black] (9,0) rectangle (12,2);
        \draw (9.5,0.5) node(1)[style=vertex]{};
        \draw (9.5,1.5) node(2)[style=vertex]{};
        \draw (10,1) node(3)[style=vertex]{};
        \draw (10.5,1.5) node(4)[style=vertex]{};
        \draw (10.5, 0.5) node[below]{$v$};        
        \draw (10.5, 1.5) node[above]{$u$};        
        \draw (10.5,0.5) node(5)[style=vertex]{};
        \draw (11.5,0.5) node(6)[style=vertex]{};
        \draw (11.5, 0.5) node[right]{$z$};        

        \draw (11.5,1.5) node(7)[style=vertex]{};
        \draw (11.5, 1.5) node[right]{$y$};        

        \draw (1)  -- (2);
        \draw (1)  -- (3);
        \draw (2)  -- (3);
        \draw (3)  -- (5);
        \draw (2)  -- (4);
        \draw (4)  -- (7);
        \draw (5)  -- (6);
        \draw (6)  -- (7);
        
    \end{tikzpicture}
    \caption{An example is given on how two graphs can be glued on non-edges $\{u,v\}$ and $\{u',v'\}$.}
    \label{fig:glue}
\end{figure}

\paragraph{Fixing operations}
We will sometimes need to maintain the stronger property that the graph is not just $H$-free but that it also does not contain a copy of some core of $H$. For this reason, we define a fixing operation relative to a class of graphs, as follows.

Given a class of graphs $\mathcal{F}$, we say the finite graph $H$ admits an \emph{edge fixing operation} (resp. \emph{non-edge fixing operation}) for $\mathcal{F}$ if for every $G\in \mathcal{F}$ and every edge (resp. non-edge) $xy$ of $G$, there exists a graph $G'$ obtained from gluing a graph onto $G$  such that
\begin{itemize}
\item $G'\in \mathcal{F}$, and
\item any locally finite perturbation of $G'$ which perturbs $xy$ contains a copy of $H$.
\end{itemize}
Crucially, $G$ is an induced subgraph of $G'$.
We say $H$ admits a \textit{fixing operation} for $\mathcal{F}$ if it admits both an edge fixing operation and a non-edge fixing operation.
We show in Lemma \ref{lem:fix_to_strong} that if $H$ admits a fixing operation for a non-empty class $\mathcal{F}$ of $H$-free graphs, then there exists a strongly $H$-induced-saturated graph.

\paragraph{Gatekeepers} An edge (resp. non-edge) $uv$ in a graph $H$ is a \emph{gatekeeper} if for any $H$-free graph $G$, gluing a copy of $H - uv$ on a non-edge (resp. edge) of $G$ results in an $H$-free graph.

\paragraph{Cores} Given a graph $H$, the \emph{$(k, \ell)$-core} is the graph obtained by iteratively removing vertices which have fewer than $k$ neighbours or less than $\ell$ non-neighbours. 
We write \textit{$k$-core} to refer to the $(k,0)$-core. 

The \textit{$3^*$-core} of a graph $H$ is obtained by iteratively removing vertices which have at most $2$ neighbours and removing pairs $uv$ of vertices with $|N[u]\cup N[v]|\leq 4$. 

\section{Graphs with a 3-connected core}
\label{sec:3connected}
In this section we prove Theorem~\ref{thm:main} in the case where $H$ has a 3-connected $3^*$-core which is not a clique.

In order to handle locally finite perturbations (rather than single edge perturbations), we will need the following result. 
Given a graph $H$ with a pair of distinct vertices $uv$, we define the infinite graph $H_{uv\to c}$ (resp. $H_{uv\to s}$) as the graph obtained from $H$ by perturbing the pair $uv$ and then blowing up every vertex other than $u$ and $v$ into an infinite clique (resp. independent set). When we blow up a vertex $v$ into an infinite clique (or independent set), we replace the vertex $v$ by new vertices inducing an infinite clique (or independent set) and add edges from all of new vertices to all of the vertices that used to be adjacent to $v$.
\begin{lemma}
\label{lem:blowup}
Let $H$ be a finite graph which is not a clique or independent set. Then $H$ has an edge $uv$ such that one of $H_{uv\to c}$ and $H_{uv\to s}$ contains no induced copy of $H$. 
\end{lemma}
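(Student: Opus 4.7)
The plan is to characterize induced copies of $H$ in $H_{uv\to c}$ (resp.\ $H_{uv\to s}$) by a ``cloud-projection'' map $\phi\colon V(H)\to V(H)$ that records the cloud (or singleton $u$ or $v$) containing each vertex of the copy. Such a $\phi$ must satisfy (i) $|\phi^{-1}(u)|,|\phi^{-1}(v)|\le 1$, (ii) for every $w\neq u,v$ the fiber $\phi^{-1}(w)$ is a set of pairwise true twins (resp.\ false twins) in $H$, and (iii) the cross-fiber adjacencies match those of $H$ with the pair $uv$ flipped. A bijective $\phi$ would induce an isomorphism between $H$ and $H$ with $uv$ flipped, impossible since these two graphs differ by exactly one edge; so any valid $\phi$ must have a fiber of size $\ge 2$, which in turn requires $H$ to contain a non-trivial true-twin (resp.\ false-twin) class.

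If $H$ has no non-trivial true-twin class, then $H_{uv\to c}$ is $H$-free for every edge $uv$ of $H$ (at least one edge exists, as $H$ is not a stable set); symmetrically, if $H$ has no non-trivial false-twin class, then $H_{uv\to s}$ works for every edge. The main case is when $H$ has both a non-trivial true-twin class $T$ and a non-trivial false-twin class $F$. There I would pick $uv$ to be any edge inside $T$ (such an edge exists because $T$ is a clique of size $\ge 2$) and argue that $H_{uv\to s}$ is $H$-free. For a putative induced embedding $\iota\colon V(H)\to V(H_{uv\to s})$, the key observation is that $u$ and $v$ are non-adjacent in $H_{uv\to s}$ with identical external neighbourhoods; hence if both $u,v\in\iota(V(H))$, their preimages $h_u,h_v\in V(H)$ must be a false-twin pair in $H$. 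Comparing edge counts then yields $2\deg(h_u)$ edges of $H$ incident to $\{h_u,h_v\}$ (even, since $h_u\not\sim h_v$) against $2\deg(u)-1$ edges incident to $\{u,v\}$ (odd, since $u\sim v$), and when the restricted map $\iota|_{V(H)\setminus\{h_u,h_v\}}$ is bijective this forces an isomorphism $H-\{h_u,h_v\}\cong H-\{u,v\}$, contradicting the parity computation.

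The hard part will be the non-bijective restricted sub-case, which can occur when $H$ carries further non-trivial false-twin structure beyond $\{h_u,h_v\}$ (for instance a false-twin class of size $\ge 3$ or a second false-twin class). There the parity argument no longer closes on its own, and the plan is to exploit the rigidity coming from the true-twin class $T$: for any pair of true twins $a,b\in T\setminus\{u,v\}$, the adjacency $ab\in E(H)$ forces $\phi(a)\phi(b)$ to be an edge of $H$ with the pair $uv$ removed, and propagating such constraints through the fibers should rule out all remaining non-bijective $\phi$. Finally, the sub-cases in which only one or neither of $u,v$ lies in $\iota(V(H))$ should be straightforward: the copy then embeds in the stable-set blow-up of $H-\{u\}$ or of $H-\{u,v\}$, which is too sparse to host $H$ by a direct degree count.
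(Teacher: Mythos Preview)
Your framework is sound: the cloud-projection $\phi$ with properties (i)--(iii) is the right object, and your handling of the two easy cases (no true twins anywhere, respectively no false twins anywhere) is correct. In Case~3 you also pick the right edge---taking $u,v$ to be true twins and aiming for $H_{uv\to s}$ is exactly what works---and the conclusion you are after is true. But the argument you give for it has real gaps. The parity computation only treats the sub-case in which $\phi$ is bijective off $\{h_u,h_v\}$; the ``propagation'' plan for the non-bijective sub-case is not an argument (nothing concrete is being propagated, and there is no reason true-twin constraints should bound the sizes of false-twin fibers); and the ``direct degree count'' claim for the sub-cases where $u$ or $v$ is missed is wrong as written, since vertices in the stable-set blow-up of $H-u$ can have arbitrarily large degree.

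The idea you are missing is short. First observe that no vertex can have both a true twin and a false twin: if $x$ is a true twin and $y$ a false twin of $w$, then $x\in N(w)=N(y)$ gives $x\sim y$, while $y\notin N[w]=N[x]$ gives $x\not\sim y$. Hence your chosen $u,v$, being true twins, have \emph{no} false twins. Now let $H'$ be the quotient of $H$ by the false-twin relation; $u$ and $v$ survive in $H'$, and one checks $H_{uv\to s}\cong (H')_{uv\to s}$ (blowing up a false-twin class into several infinite stable sets with identical external adjacencies is the same as blowing up the single quotient vertex into one). Since $H'$ has no false twins, your own Case-2 argument shows $(H')_{uv\to s}$ is $H'$-free, and as $H'\hookrightarrow H$ it is $H$-free too. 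This single reduction replaces all of your Case-3 sub-case analysis.

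The paper packages the same idea slightly differently: it proves directly that $H_{uv\to s}$ is $H$-free whenever \emph{neither endpoint} of $uv$ has a false twin (and dually for $H_{uv\to c}$ and true twins), via exactly this quotient trick. It then finishes with a two-line structural argument: if no edge had this property, every edge would run between the set $A$ of vertices with a true twin and the set $B$ of vertices with a false twin, making $A$ a stable set---impossible, since each vertex of $A$ is adjacent to its true twin in $A$.
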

\begin{proof} 
First, suppose that $uv$ is an edge in $H$ such that 
neither $u$ nor $v$ has a true twin. We will show that $H_{uv\to c}$ contains no induced copy of $H$. Note that the property of being true twins is an equivalence relation and contracting the equivalence classes gives a graph $H'$ which has no true twins. This leaves the vertices $u$ and $v$ unchanged as they have no true twins. Note that $H'$ is an induced subgraph of $H$ so if there is an induced copy of $H$ in $H_{uv\to c}$, then there must be a copy of $H'$ as well. Since $H'$ has no true twins, every vertex of $H'$ must be in the blow-up of a different vertex in $H_{uv\to c}$. But $H_{uv\to c}$ is the same as the graph obtained by blowing up each vertex of $H' - uv$ (instead of $H-uv$) except for $u$ and $v$ into an infinite clique. So a copy of $H'$ in $H_{uv\to c}$ gives an induced copy of $H'$ in $H'-uv$, a contradiction. 

A similar argument shows that if neither $u$ and $v$ contain a false twin, the graph $H_{uv\to s}$ contains no induced copy of $H$. 

Next, we argue that a vertex $v$ cannot have both a true twin $x$ and a false twin $y$. Indeed, $x$ is adjacent to $v$, so $y$ should also be adjacent to $x$ (since $v,y$ are false twins). But $y$ is not adjacent to $v$ so $x$ should not be adjacent to $y$. This gives a contradiction. 

So we may and will assume that for each edge $ab$ of $H$, either $a$ has a false twin and $b$ a true twin, or vice versa (as otherwise we are done). This means we can partition the vertices of $H$ into three sets, a set $A$ of vertices with a true twin, a set $B$ of vertices with a false twin $B$ and a set $C$ of vertices with neither a true twin nor a false twin. Since for each edge $ab$ of $H$, either $a$ has a false twin and $b$ a true twin, or vice versa, edges can only go between $A$ and $B$. In particular, there are no edges in $A$. But any vertex in $A$ must have an edge to its true twin in $A$ so $A$ is empty. But then there are no edges at all in our graph, contradicting our assumption that $H$ is not an independent set.
\end{proof}

Since the complement of an independent set or clique is again an independent set or clique, we immediately get the following corollary by taking complements.

\begin{corollary}
\label{cor:blowup}
For every non-trivial finite graph $H$, there is a non-edge $uv$ such that one of $H_{uv\to c}$ and $H_{uv\to s}$ contains no induced copy of $H$. 
\end{corollary}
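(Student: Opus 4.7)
The plan is to deduce the corollary directly from Lemma~\ref{lem:blowup} by taking complements, as hinted in the sentence preceding the statement. First I would apply Lemma~\ref{lem:blowup} to $\overline{H}$. Since complementation swaps cliques and stable sets, $H$ being neither a clique nor a stable set (which is what ``non-trivial'' refers to) is equivalent to $\overline{H}$ having the same property, so the hypothesis of the lemma applies. This produces an edge $uv$ in $\overline{H}$, which is the same as a non-edge $uv$ in $H$, such that one of $\overline{H}_{uv\to c}$ or $\overline{H}_{uv\to s}$ contains no induced copy of $\overline{H}$.

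The remaining task is to translate ``no induced $\overline{H}$'' in the blow-up of the complement into ``no induced $H$'' in the corresponding blow-up of $H$. For this I would verify two routine compatibility facts. First, disturbing a pair $uv$ commutes with taking the complement: flipping the single pair $uv$ and flipping all pairs are commuting involutions on the edge set. Second, taking the complement of a blow-up turns clique blow-ups into stable set blow-ups (and vice versa), because for two distinct original vertices $a,b$ the induced bipartite graph between their blow-up classes is either complete or empty, and this gets reversed under complementation, while within a single class the same reversal swaps cliques and stable sets.

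Combining these two observations I expect to obtain the identities
\[
\overline{\overline{H}_{uv\to c}} \;=\; H_{uv\to s} \qquad \text{and} \qquad \overline{\overline{H}_{uv\to s}} \;=\; H_{uv\to c}.
\]
Since a graph $G$ contains an induced copy of $\overline{H}$ if and only if $\overline{G}$ contains an induced copy of $H$, applying complementation to the conclusion supplied by Lemma~\ref{lem:blowup} gives precisely that one of $H_{uv\to s}$ or $H_{uv\to c}$ contains no induced copy of $H$, as required.

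There is no real obstacle here; the only thing to be careful about is bookkeeping around which of the clique and stable set blow-ups is paired with which, and the order in which the disturb step and the blow-up step are applied (the disturb step must be performed before blow-up in both $H$ and $\overline{H}$, and the two commute with complementation separately, so the composed operation does as well). I would keep the write-up short, essentially one paragraph verifying the two compatibility statements and one paragraph invoking Lemma~\ref{lem:blowup} on $\overline{H}$.
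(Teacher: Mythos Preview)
Your proposal is correct and takes exactly the same approach as the paper, which simply notes that the result follows immediately from Lemma~\ref{lem:blowup} by taking complements. You have spelled out in more detail the bookkeeping (that disturbing commutes with complementation and that complementation swaps clique blow-ups with stable-set blow-ups) which the paper leaves implicit.
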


We next repeat two simple, but important, observations.
\begin{observation}
\label{obs:3connglue}
    Let $H$ be a finite 3-connected  graph and let $G$ and $G'$ be two graphs which are $H$-free. Let $G''$ be obtained by gluing the graphs $G$ and $G'$ together along either an edge or a non-edge $xy$. Then $G''$ is $H$-free.
\end{observation}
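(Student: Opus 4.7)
The plan is to argue by contradiction, using $\{x,y\}$ as a separator of size at most two in $G''$. Concretely, suppose that $G''$ contains an induced copy $H^*$ of $H$. Let $A := V(G) \setminus \{x,y\}$ and $B := V(G') \setminus \{x,y\}$, so that $V(G'')$ is the disjoint union $A \sqcup B \sqcup \{x,y\}$. By definition of the gluing along $\{x,y\}$, the graph $G''$ has no edges between $A$ and $B$; thus every path in $G''$ connecting $A$ to $B$ must pass through a vertex of $\{x,y\}$.

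Next, I would split on how $V(H^*)$ intersects $A$ and $B$. If $V(H^*) \subseteq A \cup \{x,y\} = V(G)$, then the adjacencies inside $V(H^*)$ in $G''$ coincide with those in $G$, so $G$ already contains an induced copy of $H$, contradicting $H$-freeness of $G$; the symmetric conclusion handles $V(H^*) \subseteq V(G')$. The remaining case is that $V(H^*)$ meets both $A$ and $B$. Removing from $H^*$ the set $S := V(H^*) \cap \{x,y\}$ (which has size at most $2$) leaves an induced subgraph of $G''$ that still contains vertices in both $A$ and $B$ but has no edges between them, and is therefore disconnected.

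Transporting this back to $H$ via the isomorphism $H \cong H^*$, we obtain a vertex set $S' \subseteq V(H)$ of size at most $2$ such that $H - S'$ is disconnected. Since $H$ is $3$-connected, it has at least $4$ vertices and $H - S'$ must be connected for every such $S'$, which is a contradiction. Hence no induced copy of $H$ can exist in $G''$, so $G''$ is $H$-free.

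The only mildly delicate point is the case analysis on $|V(H^*) \cap \{x,y\}| \in \{0,1,2\}$, but each possibility produces a separator of $H$ of size at most $2$, so $3$-connectedness of $H$ closes all cases uniformly; no additional structural input about $G$ or $G'$ beyond $H$-freeness is needed.
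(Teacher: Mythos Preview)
Your proof is correct and follows exactly the same approach as the paper's: use $\{x,y\}$ as a separator of size at most two in $G''$, argue that a copy of $H$ cannot lie entirely on one side by $H$-freeness of $G$ and $G'$, and then derive a cut of size at most two in $H$, contradicting $3$-connectedness. Your write-up is simply more explicit about the case split on $|V(H^*)\cap\{x,y\}|$, whereas the paper compresses this into a one-line remark.
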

Indeed, any copy of $H$ would need to contain both a vertex $u \in V(G) \setminus \{x,y\}$ and a vertex $v \in V(G') \setminus \{x,y\}$, but we can disconnect $u$ and $v$ in $G''$ by removing the vertices $x$ and $y$. This gives a vertex cut of $H$ of size 2, contradicting the assumption that $H$ is 3-connected. In the terminology introduced above, the observation says that any two vertices in $H$ are gatekeepers.
\begin{observation}
\label{lem:blow_up_property}
Let $H$ be a finite graph with a pair of distinct vertices $uv$. Then, for $G'\in \{H_{uv\to c},H_{uv\to s}\}$, any locally finite perturbation of $G'$ which perturbs $uv$ contains a copy of $H$.
\end{observation}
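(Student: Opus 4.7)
The plan is to embed a copy of $H$ greedily into the graph $G''$ obtained from $G'$ by the locally finite disturbance, using $u$ and $v$ as the images of themselves and choosing the image of each remaining vertex of $H$ from its corresponding blow-up class. Fix an enumeration $h_1,\dots,h_k$ of $V(H)$ with $h_1=u$ and $h_2=v$. For each $i\ge 3$, let $B_i\subseteq V(G')$ denote the infinite clique (if $G'=H_{uv\to c}$) or infinite stable set (if $G'=H_{uv\to s}$) that has replaced $h_i$ in the blow-up. I will choose $v_i\in B_i$ for $i\ge 3$ so that the induced subgraph of $G''$ on $\{u,v,v_3,\dots,v_k\}$ is isomorphic to $H$ via $h_i\mapsto v_i$.

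The first observation is that in the undisturbed graph $G'$, for any choice of representatives $v_i\in B_i$, the induced subgraph on $\{u,v,v_3,\dots,v_k\}$ already agrees with $H$ on every pair except $\{u,v\}$. Indeed, edges across distinct blow-up classes match the corresponding edges of $H$, adjacencies between $\{u,v\}$ and other classes are inherited unchanged from $H$, and by construction the pair $\{u,v\}$ is the unique disturbed pair in passing from $H$ to $G'$. After the locally finite disturbance that flips $\{u,v\}$, this single discrepancy is repaired: in $G''$, the adjacency of $u$ and $v$ once again matches $H$. So we will be done if we can choose the $v_i$'s such that none of the remaining pairs $\{v_i,v_j\}$, $\{v_i,u\}$ or $\{v_i,v\}$ is disturbed by the locally finite disturbance.

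This is where local finiteness is used. Having chosen $v_1=u,v_2=v,v_3,\dots,v_{i-1}$, I want to pick $v_i\in B_i$ avoiding any pair involving a previously chosen vertex that the disturbance flipped. For each fixed $j<i$, the definition of a locally finite disturbance guarantees that only finitely many pairs containing $v_j$ are disturbed, so only finitely many vertices $w$ satisfy ``$\{v_j,w\}$ is disturbed''. Taking the union over $j=1,\dots,i-1$ still yields a finite set of forbidden candidates, while $B_i$ is infinite; hence a valid $v_i$ exists. Iterating this choice for $i=3,\dots,k$ produces the desired embedding, and since the $B_i$'s are pairwise disjoint, the chosen vertices are automatically distinct.

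The only thing to watch out for is that the blow-up direction (clique versus stable set) is irrelevant to the argument: the internal structure of each $B_i$ is never used, because we pick at most one representative from each class. The potential obstacle one might worry about is that a single disturbance could simultaneously corrupt many pairs incident to $v_i$, but the local finiteness hypothesis is precisely what prevents this from happening for any fixed already-embedded vertex, and finitely many such vertices remove only finitely many candidates from the infinite pool $B_i$.
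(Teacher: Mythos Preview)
Your proposal is correct and follows essentially the same greedy embedding argument as the paper: map $u,v$ to themselves, then iterate through the remaining vertices of $H$, at each step choosing a representative from the corresponding infinite blow-up class that avoids the finitely many disturbed pairs incident to already-embedded vertices. Your write-up is somewhat more detailed (in particular, you explicitly note why the clique-versus-stable-set distinction is irrelevant), but the underlying idea is identical.
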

We repeat the argument here for completeness. The perturbation brings the pair $uv$ to the same status it had in $H$; since the perturbation is locally finite, we can embed a copy of $H$ vertex-by-vertex, as follows. We send $u,v$ in $H$ to $u,v$ in $G'$ respectively. We then iterate through the remaining vertices of $H$ and map each vertex to one of the vertices in the blow-up of this vertex in $G'$. There are infinitely many options in each blow-up, so at least one of these vertices is not incident with the finite number of perturbations incident to the finite number of vertices we have embedded so far, and we map $h$ to any such vertex.

We can now define our fixing operation and will then extend this to handle cores.
\begin{lemma}
\label{lem:3-conn}
If $H$ is a finite graph which is 3-connected and not a clique, then $H$ admits a fixing operation for the class of $H$-free graphs.
\end{lemma}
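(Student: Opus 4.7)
The plan is to read off the two fixing operations directly from the blow-up gadgets supplied by Lemma~\ref{lem:blowup} and Corollary~\ref{cor:blowup}, using Observations~\ref{obs:3connglue} and~\ref{lem:blow_up_property} to verify the two bullets in the definition of a fixing operation. Both the lemma and the corollary apply because $H$ is 3-connected (hence connected, so not a stable set) and by assumption not a clique.

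For the non-edge fixing operation, given an $H$-free graph $G$ and a non-edge $xy$ of $G$, I will invoke Lemma~\ref{lem:blowup} to obtain an edge $uv$ of $H$ together with a graph $H^\ast \in \{H_{uv\to c}, H_{uv\to s}\}$ that contains no induced copy of $H$. Note that $uv$ is a non-edge in $H^\ast$, since the construction of $H^\ast$ first disturbs the pair $uv$ and then blows up the remaining vertices. I will form $G'$ by gluing $H^\ast$ onto $G$ along the non-edge $xy$, identifying $u$ with $x$ and $v$ with $y$. Since both $G$ and $H^\ast$ are $H$-free and $H$ is 3-connected, Observation~\ref{obs:3connglue} ensures $G'$ is $H$-free. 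For the second bullet, any locally finite disturbance $G''$ of $G'$ that disturbs $xy$ restricts on $V(H^\ast)$ to a locally finite disturbance of $H^\ast$ disturbing $uv$, and by Observation~\ref{lem:blow_up_property} that restriction contains an induced copy of $H$, which is therefore also present in $G''$. The edge fixing operation is constructed symmetrically, replacing Lemma~\ref{lem:blowup} by Corollary~\ref{cor:blowup}: now $uv$ is a non-edge of $H$, which becomes an edge in $H^\ast$, and we glue along the edge $xy$ of $G$; the verification is identical.

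The proof is essentially a matter of wiring up the pieces already built, so I do not expect a hard step. The one subtlety to spell out is that restricting a locally finite disturbance of $G'$ to $V(H^\ast)$ really does give a locally finite disturbance of $H^\ast$ that still disturbs the pair $uv$. This is immediate because gluing along $\{x,y\}$ only identifies that pair, so the induced subgraph of $G'$ on $V(H^\ast)$ is a relabelled copy of $H^\ast$ (with $u=x$ and $v=y$), and restricting a locally finite disturbance to a subset of vertices trivially preserves local finiteness while keeping the status of pairs inside that subset unchanged.
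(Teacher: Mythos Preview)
Your proof is correct and follows exactly the paper's approach: glue an $H$-free blow-up gadget $H^\ast$ onto $G$ at the pair $xy$, invoke Observation~\ref{obs:3connglue} for $H$-freeness of $G'$, and invoke Observation~\ref{lem:blow_up_property} on the restricted disturbance to produce the induced copy of $H$. The only cosmetic difference is that you pair Lemma~\ref{lem:blowup} with the non-edge case and Corollary~\ref{cor:blowup} with the edge case, while the paper's write-up assigns them the other way around; your assignment is the one that makes the edge/non-edge status match at the glue point, so your wiring is in fact the cleaner one.
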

\begin{proof}
Suppose that $H$ is a finite 3-connected graph and we are given an $H$-free graph $G$ and an unfixed edge $xy$ of $G$.
By Lemma \ref{lem:blowup}, we may assume that $H$ has an edge $uv$ such that there is a graph $G'\in \{H_{uv\to c},H_{uv\to s}\}$ that is $H$-free. 
Let $G''$ be the graph formed by gluing together $G$ and $G'$ along the edge $xy$ of $G$ and the edge $uv$ of $G'$.  By Observation \ref{obs:3connglue}, the graph $G''$ is $H$-free. Any locally finite perturbation of $G''$ which perturbs $xy$ contains a copy of $H$ using vertices from $G'$ by Observation \ref{lem:blow_up_property}. So indeed $xy$ is fixed in $G''$.  

The fixing operation for non-edges $xy$ is analogous but relies on Corollary \ref{cor:blowup} instead of Lemma \ref{lem:blowup}.
\end{proof}

Next, we provide a technical lemma which allows us to extend the fixing operations to cores. Operations 3 and 4 will only be needed for graphs $H$ on at most 11 vertices in Section~\ref{sec:small}. We write $\delta(G)$ for the minimum degree of the graph $G$ and $\deg_G(v)$ for the degree of vertex $v\in V(G)$ in $G$.
\begin{lemma}\label{lem:handle_core}
Let $H$ and $C$ be finite graphs such that $H$  admits a fixing operation for the class of $C$-free graphs. Let $H'$ be obtained from $H$ via one of the following operations:
\begin{enumerate}
\item adding a new vertex with at most $k<\delta(C)$ neighbours, or
\item adding a new vertex with at most $\ell<\delta(\overline{C})$ non-neighbours, or
\item if $\delta(C) \geq 2$ and $C$ has no vertex $v$ with $\deg_C(v)=2$ and two non-adjacent neighbours, adding a new vertex adjacent precisely to two non-adjacent vertices of $H$,
\item if $\delta(C) \geq 2$ and $C$ has no vertex $v$ with $\deg_C(v)=2$ and two adjacent neighbours, adding a new vertex adjacent precisely to two adjacent vertices of $H$, or
\item if $C$ is 3-connected and not a clique, adding a pair of adjacent new vertices onto $H$ that are both adjacent to the same two vertices of $H$.
\end{enumerate}
Then $H'$  also admits a fixing operation for the class of $C$-free graphs. 
\end{lemma}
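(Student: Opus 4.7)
Fix a $C$-free graph $G$ and an unfixed pair $xy$. The plan is to apply the given fixing operation for $H$ to obtain a $C$-free graph $G_1\supseteq G$ in which $xy$ is fixed for $H$, and then to glue additional vertices onto $G_1$ to obtain a $C$-free graph $G_2$ in which $xy$ is fixed for $H'$. Writing $z$ for the new vertex (or $z_1,z_2$ for operation 5) added to form $H'$, the broad strategy is: any locally finite disturbance $\widetilde{G}$ of $G_2$ disturbing $xy$ restricts to a locally finite disturbance of $G_1$ disturbing $xy$, so the fixing property for $H$ yields an induced copy $h_1,\ldots,h_n$ of $H$ in $V(G_1)$; the gadgets are then designed so that local finiteness at each of the finitely many $h_i$'s lets us find gadget vertices playing the role of $z$ (or $z_1,z_2$) in $\widetilde{G}$.

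For operations 1 and 2 I would add, for every subset $T\subseteq V(G_1)$ of size at most $k$, infinitely many new vertices $w$ with $N_{G_2}(w)=T$, and symmetrically for operation 2 with prescribed non-neighbourhood of size at most $\ell$. Each such $w$ has degree below $\delta(C)$ (respectively codegree below $\delta(\overline{C})$), so $w$ cannot lie in any induced copy of $C$; any such copy must therefore lie inside $G_1$, contradicting its $C$-freeness. To complete an embedding of $H$ to one of $H'$, I take $T$ to be the image of the $V(H)$-neighbourhood of $z$: among the infinitely many candidates $w$ with $N_{G_2}(w)=T$, local finiteness at the $h_i$'s gives one whose adjacencies in $\widetilde{G}$ match those prescribed by $z$.

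For operations 3 and 4 I add, for each non-adjacent (resp.\ adjacent) pair $\{p,q\}\subseteq V(G_1)$, infinitely many new vertices with neighbourhood exactly $\{p,q\}$; the hypothesis $\delta(C)\ge 2$ together with the restriction on degree-two vertices of $C$ ensures such vertices cannot lie in any induced copy of $C$. For operation 5, since $C$ is $3$-connected and not a clique we have $|V(C)|\ge 5$, and Observation \ref{obs:3connglue} lets me safely glue on each pair $\{p,q\}$ a $4$-vertex gadget consisting of two adjacent new vertices both adjacent to $p,q$; using many such gadgets per pair, local finiteness produces the required adjacent pair $(z_1,z_2)$ with correct adjacencies in the disturbance. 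The main obstacle is $C$-freeness of $G_2$: the hypotheses in operations 1--5 are tailored precisely so that the corresponding gadget vertex (or pair) cannot be embedded into $C$. A secondary subtlety is ensuring that the image pair $(h_a,h_b)$ of the $H$-neighbours of the new vertex has in $G_1$ the adjacency prescribed by $H$ (non-adjacent in operations 3 and 5, adjacent in operation 4); this is forced because the copy of $H$ must be induced and the fixing operation for $H$ preserves $H$'s adjacency pattern on the representatives it produces.
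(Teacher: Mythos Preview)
Your handling of operations 1--4 matches the paper's argument: for each suitable tuple of $V(G_1)$ you attach an infinite supply of new vertices with that prescribed neighbourhood, observe that the degree bound (or the restriction on the neighbours of degree-$2$ vertices in $C$) keeps every such vertex out of any induced copy of $C$, and then use local finiteness at the finitely many $h_i$ to pick a gadget vertex whose adjacencies to $\{h_1,\dots,h_n\}$ are undisturbed.

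The gap is in operation 5. Attaching many \emph{disjoint} adjacent pairs $(z_1^{(i)},z_2^{(i)})$ to each $\{p,q\}$ does keep $G_2$ $C$-free (your $3$-connectivity argument together with $|V(C)|\ge 5$ is sound), but the embedding step can fail. Consider the disturbance that flips $xy$ and additionally deletes every internal edge $z_1^{(i)}z_2^{(i)}$ across all gadgets and all pairs: each gadget vertex is touched exactly once, so this is locally finite, yet afterwards no gadget supplies an adjacent pair, and vertices from distinct gadgets on the same $\{p,q\}$ were non-adjacent to begin with. Local finiteness at the $h_j$'s only controls pairs between gadget vertices and $V(G_1)$; it says nothing about edges internal to the gadgets. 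The paper avoids this by gluing an \emph{infinite clique} onto each pair (with a bipartite variant when $C$ is a clique minus an edge and the pair is a non-edge): once a first vertex $x'$ of the clique is chosen with correct adjacencies to the $h_j$'s, local finiteness \emph{at $x'$} leaves infinitely many $y'$ in the clique still adjacent to $x'$, so the internal edge is guaranteed to survive. The point you missed is that the edge between the two new vertices must itself be protected by an infinite pool of alternatives, not merely the edges from the new vertices to $V(G_1)$.
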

\begin{proof}
Throughout this proof, we start with a $C$-free graph $G_0$ and we assume that the pair $xy$ is unfixed. Moreover, we assume $H$ admits a fixing operation, so in particular there is a $C$-free graph $G$ obtained by gluing a graph onto $G_0$, such that any locally finite perturbation of $G$ which perturbs the pair $xy$ contains a copy of $H$. 

Suppose first that $H'$ is obtained from $H$ by adding a vertex $w$ to $H$ with neighbours $v_1,\dots,v_k\in H$, with $k<\delta(C)$.  
For each choice of $k$ vertices $(u_1, \dots, u_k)$ from $G$, we add an infinite independent set fully connected to the vertices $(u_1, \dots, u_k)$. Let $G'$ be the resulting graph, which is by definition obtained by gluing a graph onto $G$, and so can also be obtained by gluing a graph onto $G_0$. Crucially, all vertices in $V(G')\setminus V(G)$ have degree $k< \delta(C)$ and so none of them can be present in a copy of $C$. Since $G$ is $C$-free, $G'$ must also be $C$-free. It remains to show that the pair $xy$ is now fixed. Any locally finite perturbation in $G'$ which perturbs $xy$ must create a copy of $H$ using solely vertices of $G$. Let $u_1,\dots,u_k$ be the vertices in this copy that perform the roles of $v_1,\dots,v_k$ in $H$. In $G'$, there is an infinite independent set $S$ which is adjacent to exactly $u_1,\dots,u_k$, and for each vertex in the copy of $H$, only a finite number of edges to $S$ have been modified. Hence there is a vertex $s\in S$ for which none of the edges to this copy have been modified and this provides us with a copy of $H'$ in $G'$. Hence $H'$ admits a fixing operation for the class of $C$-free graphs.

A similar argument applies when $H'$ is obtained by adding $\ell<\delta(\overline{C})$ non-neighbours $v_1, \dots, v_\ell$ to $H$: we repeat the ``complement'' of the argument above, adding an infinite clique which is adjacent to all vertices except for $u_1,\dots,u_\ell$ (for each collection of $\ell$ distinct vertices). 

If $\delta(C)\geq 2$ and every degree 2 vertex has adjacent neighbours, then we also follow a similar construction. Let $H'$ be the graph obtained from $H$ by adding a new vertex $w$ adjacent to non-adjacent vertices $v_1,v_2$ in $H$. Let $G'$ be obtained from $G$ by adding, for each pair $uu'$ of non-adjacent vertices in $G$, an infinite independent set adjacent to $u$ and $u'$. Again, $G'$ can be obtained by gluing a graph onto $G_0$. No copy of $C$ can use the vertices from $V(G')\setminus V(G)$, since each vertex in $C$ must have degree 2, and if it has degree 2, the neighbours must be adjacent. A copy of $H'$ is created after a locally finite perturbation of the edge $xy$ in $G'$ for the same reason as above. For the fourth operation (when $\delta(C)\geq 2$ and the neighbours of degree 2 vertices are always are non-adjacent) we analogously glue an infinite independent set to pairs of adjacent vertices of $G$ instead.

Finally, suppose that $C$ is 3-connected and not a clique.
Suppose moreover that $H'$ is obtained by adding a pair $u,v$ of adjacent new vertices onto $H$ that are both adjacent to the same two vertices $a,b$ of $H$, that is, $N_H'[u]=N_H'[v]=\{a,b,u,v\}$. 

We first handle the case in which $ab\not\in E(H)$ and $C$ is a clique minus an edge, as this requires a different construction. Since $C$ is 3-connected, we must have $|V(C)|\geq 5$. For each choice of $a',b'$ from $G$ for which $a'b'\not \in E(G)$, we add two infinite independent sets to $G$ which are fully connected to each other and to $a',b'$, and for $a'b'\in E(G)$, we add an infinite clique connected to $a',b'$. 
If $C$ is any other 3-connected graph which is not a clique, then for each choice of $a',b'$ from $G$, we add an infinite clique connected to $a',b'$. Let $G'$ be the resulting graph. 

We argue again that $G'$ is $C$-free, which follows the ``gatekeeper'' idea from the introduction. 
Suppose towards a contradiction that $G'$ contains a copy $C'$ of $C$. We first argue that $C'$ must be contained in $\{a',b'\}$ union the new vertices we just glued onto $G$. 
Since $G$ is $C$-free and $C$ is non-empty, $C'$ needs to use at least one vertex $x'\in V(G')\setminus V(G)$.
Let $a',b'$ denote the pair of vertices in $G$ that $x'$ is adjacent to, and suppose that $C'$ also contains $x\in V(G)\setminus \{a',b'\}$.
Then $\{a',b'\}$ forms a 2-cut in $G'$ separating $x'$ from $x$, and hence separating $x$ from $x'$ in $C'$. 
This contradicts the assumption that $C$ is 3-connected. 
Hence, $C'\cap V(G)\subseteq \{a',b'\}$. 

When $a'b'\in E(G)$, then $C'$ is contained in a clique, a contradiction as $C$ is not a clique. When $a'b'\not \in E(G)$ and $C$ is a clique minus an edge, the graph $C'$ is contained in a tri-partite graph, which is not possible either as $C$ contains a copy of $K_4$. 
Hence, $G'$ is $C$-free as desired.

The graph $G'$ is again obtained by gluing a graph onto $G_0$ and we also repeat a similar argument to show the pair $xy$ has been fixed. Any locally finite perturbation in $G'$ which perturbs $xy$ must create a copy of $H$ using solely vertices of $G$. Let $a',b'$ be the vertices in this copy that perform the role of $a,b$ in $H$. 
Suppose first that in $G'$, there is an infinite clique $S$ which is adjacent to exactly $a',b'$. We first pick a vertex $x'\in S$ for which none of the edges to $a',b'$ have been modified. Next, we pick another vertex $y'\in S$ for which none of the edges to $a',b',x'$ have been modified. Together with $x',y'$, the copy of $H$ becomes a copy of $H'$ in the locally finite perturbation. 
The other case is when $C$ is a clique minus an edge and $a'b'\notin E(G')$, in which case we glued on two infinite independent sets fully adjacent to $a',b'$ and each other instead. We choose $x'$ from the first independent set for which none of the edges to $a',b'$ have been modified and then $y'$ from the second such that none of the edges to $x',a',b'$ have been modified.

Hence $H'$ admits a fixing operation for the class of $C$-free graphs.
\end{proof}

If the $3^*$-core $C$ of $H$ is 3-connected and not a clique, then in particular it has minimum degree at least 3. This means that we can obtain $H$ from $C$ by repeatedly adding vertices of degree at most $2<\delta(C)$ or adding pairs of adjacent vertices as in the statement of the lemma above. So starting from Lemma \ref{lem:3-conn} and then repeatedly applying Lemma \ref{lem:handle_core}, we obtain the following corollary.
\begin{corollary}\label{cor:3starcore}
Let $H$ be a finite graph with $3^*$-core (or 3-core) $H'$ which is 3-connected and not a clique. Then $H$ admits a fixing operation for the class of $H'$-free graphs. In particular, there exists a strongly $H$-induced-saturated graph.
\end{corollary}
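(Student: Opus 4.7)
The plan is to apply Lemma~\ref{lem:3-conn} to the core $H'$, then lift the resulting fixing operation up to $H$ by iterated use of Lemma~\ref{lem:handle_core}, and finally invoke Lemma~\ref{lem:fix_to_strong} for the ``in particular'' statement. Since $H'$ is 3-connected and not a clique, Lemma~\ref{lem:3-conn} (applied with $H'$ in the role of $H$) gives that $H'$ admits a fixing operation for the class of $H'$-free graphs. Note also that 3-connectedness forces $\delta(H')\geq 3$, a fact I will use to check the degree hypotheses of Lemma~\ref{lem:handle_core}.

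The main step is to reverse the core-reduction process. By the definition of the $3^*$-core, there is a sequence $H'=H_0,H_1,\dots,H_t=H$ in which each $H_{i+1}$ is obtained from $H_i$ by either (a) adding a single new vertex with at most $2$ neighbours in $H_i$, or (b) adding a pair of adjacent new vertices both adjacent to the same two vertices of $H_i$ and to no others (this is the reverse of removing a pair $\{u,v\}$ with $|N[u]\cup N[v]|\leq 4$; at a point where no vertex of degree $\leq 2$ remains, such a pair must consist of true twins of degree~$3$, forcing the two added vertices to be adjacent and share exactly two further neighbours). In the $3$-core case only type~(a) occurs. Assuming inductively that $H_i$ admits a fixing operation for the class of $H'$-free graphs, a type-(a) step is handled by Lemma~\ref{lem:handle_core}(1) with $C=H'$ and $k=2<3\leq \delta(H')$, and a type-(b) step by Lemma~\ref{lem:handle_core}(5) with $C=H'$, which applies precisely because $H'$ is 3-connected and not a clique. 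After $t$ iterations, $H=H_t$ itself admits a fixing operation for the class of $H'$-free graphs.

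For the ``in particular'' clause, the core $H'$ is an induced subgraph of $H$ (the core is produced by iterated vertex deletions), so every $H'$-free graph is $H$-free; in particular the empty graph is such a graph, so this class is non-empty. Lemma~\ref{lem:fix_to_strong} then supplies a strongly $H$-induced-saturated graph, completing the argument. The whole proof is essentially bookkeeping on top of the two preceding lemmas; the only mildly subtle point is the identification of the two types of $3^*$-core reductions with operations~(1) and~(5) of Lemma~\ref{lem:handle_core}, together with keeping track that the hypothesis ``$H'$ is not a clique'' is what simultaneously unlocks the base case and legitimises operation~(5).
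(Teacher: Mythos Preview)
Your proof is correct and follows essentially the same approach as the paper: start from Lemma~\ref{lem:3-conn} applied to the core $H'$, then repeatedly apply Lemma~\ref{lem:handle_core} (operations (1) and (5)) to rebuild $H$ from $H'$, and finish with Lemma~\ref{lem:fix_to_strong}. Your justification that the reversal of the $3^*$-core reduction uses only operations (1) and (5) is in fact more detailed than the paper's, which simply asserts this; your added explanation for the ``in particular'' clause (that $H'$-free implies $H$-free, so the class is non-empty) is also a point the paper leaves implicit.
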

We will also use Lemma \ref{lem:handle_core} below in Section \ref{sec:large}.

\section{A characterisation of the remaining graphs}
\label{sec:charact}
The goal of this section is to prove the following result.
\begin{theorem}
\label{thm:cases}
For any finite graph $H$ on at least $12$ vertices, either the graph $H$ or its complement $\overline{H}$ satisfies one of the following statements.
\begin{itemize}
    \item $H$ is a clique.
    \item $H$ is a forest with a unique vertex of maximum degree.
    \item The $2$-core of $H$ is $K_{2,p}$ or $K_{1,1,p}$ for some $p\geq 3$.
    \item The $3^*$-core of $H$ is 3-connected and not a clique. 
\end{itemize}
\end{theorem}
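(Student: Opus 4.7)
The plan is to proceed by a structural case analysis on $C := \mathrm{3^*\text{-}core}(H)$. Throughout, I pass freely between $H$ and $\overline{H}$, since complementing swaps ``clique'' with ``stable set'' and swaps the two types of 2-core appearing in bullet (3) of the theorem. The fourth bullet is satisfied immediately when $C$ is 3-connected and not a clique, so the analysis concentrates on three remaining cases: case (a), where $C$ is empty; case (b), where $C$ is a non-empty clique $K_n$; and case (c), where $C$ is non-empty, not 3-connected, and not a clique.

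For case (a), where the $3^*$-core of $H$ is empty, I examine the 2-core $K$ of $H$. If $K$ is empty then $H$ is a forest, and the key sub-claim is that for $|V(H)|\geq 12$ either $H$ or $\overline{H}$ has a unique vertex of maximum degree; if the maximum-degree vertex of $H$ is not unique, a short structural argument using $|V(H)|\geq 12$ will place $\overline{H}$ into bullet (3) or (4). If $K$ is non-empty, then $K$ itself admits a reduction to the empty graph by $3^*$-core operations; since those operations only remove vertices of degree at most 2 or pairs of true twins of degree 3, a direct classification shows $K$ must be $K_{2,p}$, $K_{1,1,p}$, or a graph whose complement has a $3^*$-core that is 3-connected and not a clique. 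For case (b), I reassemble $H$ from $C=K_n$ by reinserting, in reverse order, vertices of degree at most $2$ or true-twin pairs of degree $3$. Since each reinserted vertex has bounded degree, the complement $\overline{H}$ is essentially a stable set with a small number of vertices of high non-degree attached; the hypothesis $|V(H)|\geq 12$ will then force the $3^*$-core of $\overline{H}$ to be 3-connected and not a clique, or else the 2-core to fit the $K_{2,p}$ or $K_{1,1,p}$ form.

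The main obstacle is case (c). Since $C$ has minimum degree at least $3$ and no pair of true twins of degree $3$, every cut of size at most $2$ in $C$ forces substantial structure on both sides: both parts must contain at least three vertices interacting with the cut in a nontrivial way. I intend to enumerate the possible cut-vertex and 2-cut configurations in $C$ and show that each one either contradicts the minimality built into the definition of the $3^*$-core, produces a 3-connected non-clique $3^*$-core after complementing (bullet (4) for $\overline{H}$), or pins down the 2-core of $H$ or $\overline{H}$ so tightly that it must be $K_{2,p}$ or $K_{1,1,p}$. The bound $|V(H)|\geq 12$ will be used precisely to discard a finite list of small sporadic obstructions, which are exactly the graphs handled by the computer-assisted verification of Section \ref{sec:computermagic}. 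The hardest part of the proof will be carrying out this enumeration cleanly without accidentally missing a sub-case where one side of a 2-cut is ``almost but not quite'' a bad twin-pair.
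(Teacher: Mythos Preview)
Your decomposition by the $3^*$-core $C$ differs from the paper's route, which works instead with the largest 3-connected subgraph $H_0$ of $H$. The paper first proves a Ramsey-type lemma (Lemma~\ref{lem:3conRamsey}: on $\geq 12$ vertices, $H$ or $\overline{H}$ contains a 3-connected subgraph on $\geq 5$ vertices) and a structural lemma (Lemma~\ref{lem:3consubgraphgood}: any 3-connected non-clique induced subgraph on $\geq 5$ vertices forces the $3^*$-core of $H$ or of $\overline{H}$ into bullet (4)). When $H_0$ is a clique, every outside vertex has at most two neighbours in it, and a short direct analysis on $|V(H)\setminus V(H_0)|\in\{0,1,2\}$ together with a small computer check on bipartite graphs for $|V(H)\setminus V(H_0)|\geq 3$ finishes. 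Your case (c) is essentially asking you to reprove Lemma~\ref{lem:3consubgraphgood}: your observation that in $C$ every cut of size $\leq 2$ leaves at least three vertices on each side is exactly the paper's ``butterfly cut'', after which the paper argues directly---no enumeration of cut configurations---that $\overline{H}$ restricted to the two sides is 3-connected and not a clique, and that reinserting the deleted vertices in the complement preserves this.

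There is a concrete misconception in your plan: you locate the use of the bound $12$ in case (c), to be discharged via Section~\ref{sec:computermagic}. In fact case (c) needs no lower bound on $|V(H)|$ once the butterfly-cut argument is in hand; in the paper the bound $12$ enters through Lemma~\ref{lem:3conRamsey} and through the bipartite computer check inside the proof of Theorem~\ref{thm:cases}, which in your decomposition corresponds to cases (a) and (b). Section~\ref{sec:computermagic} handles graphs on $\leq 11$ vertices wholesale and plays no role in Theorem~\ref{thm:cases}. Your cases (a) and (b) are also underspecified: the ``direct classification'' of minimum-degree-$2$ graphs with empty $3^*$-core includes far more than $K_{2,p}$ and $K_{1,1,p}$ (cycles, theta graphs, $K_4$, and many others), so you genuinely need an argument that pushes $\overline{H}$ into bullet (4) rather than a finite list; and the forest sub-case (e.g.\ $H$ a long path or a matching) likewise needs a real argument to land $\overline{H}$ there.
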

Each of these cases will be handled separately in the proof of Theorem~\ref{thm:main}.

Before we present the proof of Theorem \ref{thm:cases}, we give a few auxiliary results.
We say a graph $H$ has a \emph{butterfly cut} if we can partition its vertex set as $V(H) = A\sqcup U \sqcup B$ such that there are no edges between $A$ and $B$, $|U|\leq 2$ and $|A|,|B|\geq 3$.
\begin{observation}
\label{obs:butterfly_cut}
If $H$ has a butterfly cut, then $\overline{H}$ contains $K_{3,3}$ as a subgraph.
\end{observation}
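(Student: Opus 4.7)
The plan is to unpack the definition of a butterfly cut and read it off directly in the complement. Given a butterfly cut $V(H) = A \sqcup U \sqcup B$ with $|A|,|B| \geq 3$ and no $H$-edges between $A$ and $B$, I would first pick arbitrary three-element subsets $A' \subseteq A$ and $B' \subseteq B$. The hypothesis tells me that for every $a \in A'$ and $b \in B'$, the pair $ab$ is a non-edge of $H$, hence an edge of $\overline{H}$.

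Next I would observe that the bipartite graph on $A' \cup B'$ obtained by keeping exactly these $9$ cross-edges is isomorphic to $K_{3,3}$, which by construction is a subgraph of $\overline{H}$. This finishes the proof.

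I don't anticipate any obstacle: the statement is essentially a reformulation of the butterfly cut condition in complementary language. The only care needed is to emphasize that $K_{3,3}$ is found as a \emph{subgraph} (not necessarily induced), since edges inside $A$ or inside $B$ in $\overline{H}$ are irrelevant to the $K_{3,3}$ witness, and the vertices of $U$ are simply discarded. Since the lemma statement only asks for a subgraph, this is exactly what is required.
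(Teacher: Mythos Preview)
Your proposal is correct and follows exactly the paper's approach: the paper simply remarks that the two parts of the $K_{3,3}$ can be taken inside $A$ and $B$ respectively, which is precisely your argument of selecting three vertices from each side and noting that all cross-pairs become edges in $\overline{H}$.
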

Indeed, we can find the copy of $K_{3,3}$ with the two parts of the bipartition contained in $A$ and $B$ respectively.
\begin{lemma}\label{lem:3conRamsey}
If a finite graph $H$ has at least $12$ vertices, then either $H$ or $\overline{H}$ has a 3-connected subgraph on at least 5 vertices. 
\end{lemma}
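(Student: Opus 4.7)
The plan is to locate an induced $3$-connected subgraph on $5$ vertices in $H$ or $\overline{H}$: this suffices, since a $5$-vertex subgraph meets the ``at least $5$ vertices'' requirement, and the induced subgraph on a given vertex set has at least the connectivity of any subgraph on that set. A graph on $5$ vertices is $3$-connected iff its complement is a matching of size at most $2$---so the $3$-connected $5$-vertex graphs are exactly $K_5$, $K_5-e$, and $K_{2,2,1}$. The task therefore reduces to finding in $H$ an induced copy of one of the six graphs
\[
K_5,\; K_5-e,\; K_{2,2,1},\; \overline{K_5},\; K_2+3K_1,\; 2K_2+K_1,
\]
where the last three (the complements of the first three) give the desired subgraph inside $\overline{H}$.

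The key steps are as follows. Since $\deg_H(v) + \deg_{\overline{H}}(v) = 11$ for every vertex, some vertex has degree at least $6$ in $H$ or $\overline{H}$; passing to $\overline{H}$ if necessary, fix $v$ with $|N_H(v)| \geq 6$ and set $A := N_H(v)$. By Ramsey's theorem $R(3,3) = 6$, $H[A]$ contains either a triangle $\{a,b,c\}$ or an independent triple. In the triangle case $\{v,a,b,c\}$ induces a $K_4$ in $H$; if some other vertex has at least $3$ $H$-neighbors in this $K_4$ we immediately find an induced $K_5$ or $K_5-e$. Otherwise, each of the $\geq 8$ outside vertices has at most $2$ $H$-neighbors in $\{v,a,b,c\}$ and hence at least $2$ $\overline{H}$-neighbors in this $\overline{H}$-independent $4$-set. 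Pigeonholing over the $\binom{4}{2}=6$ pairs of $\{v,a,b,c\}$ produces a pair $\{x,y\}$ with several common $\overline{H}$-neighbors among the outside vertices; depending on the $H$-structure on these common neighbors, one extracts either an induced $K_2+3K_1$ (when the common neighbors are $H$-independent, giving $K_5-e$ in $\overline{H}$) or an induced $2K_2+K_1$ (when exactly one $H$-edge is present among them, giving $K_{2,2,1}$ in $\overline{H}$). The independent-triple case is handled symmetrically in $\overline{H}$.

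The main obstacle is the final extraction: the pigeonhole guarantees common $\overline{H}$-neighbors of $\{x,y\}$, but to realise the $5$-vertex configuration in $H$ we must control the $H$-edges among these common neighbors, and the problematic configurations are those in which these neighbors contain many $H$-edges. Handling these requires either switching to a different pair from the pigeonhole, an auxiliary Ramsey argument inside the outside vertices, or an edge-counting fall-back: any graph with no $3$-connected induced subgraph on $\geq 5$ vertices has a ``$K_4$-tree-like'' $3$-block decomposition (all blocks of size $\leq 4$) and hence at most $3n-6$ edges, so both $H$ and $\overline{H}$ being such graphs would give $|E(H)|+|E(\overline{H})| \leq 2(3 \cdot 12 - 6) = 60 < 66$, a contradiction. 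The lower bound $n \geq 12$ enters precisely to guarantee the $\geq 8$ outside vertices needed to make these pigeonhole and counting arguments go through.
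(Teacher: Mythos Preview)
Your Ramsey/pigeonhole line is not completed: you flag ``the main obstacle'' yourself and list three possible resolutions without carrying any of them out. In fact two of the three are dead ends as stated (switching pigeonhole pairs or a further Ramsey step inside the outside vertices does not obviously terminate), and only the third --- the edge-counting fallback --- is an actual argument. That fallback does work and in fact makes the Ramsey portion entirely superfluous: if neither $H$ nor $\overline{H}$ has a $3$-connected subgraph on $\ge 5$ vertices then $|E(H)|+|E(\overline{H})|\le 2(3\cdot 12-6)=60<66$, a contradiction. But you assert the key inequality $|E(G)|\le 3n-6$ without proof. The phrase ``$K_4$-tree-like $3$-block decomposition'' is suggestive but not a proof: Tutte $3$-blocks carry virtual edges and are not subgraphs, so the bookkeeping needs care. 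The bound is true and can be shown by a short strong induction on $n$: for $n\ge 5$ the graph itself cannot be $3$-connected, so take a separation along a cut of size $\le 2$ into $G_1,G_2$ and apply the inductive bound to each side; the arithmetic gives $|E(G)|\le 3(n+2)-12=3n-6$ in the worst case (a $2$-cut with $uv\notin E$). Once you supply this, your proof is complete --- but as written there is a gap.

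The paper takes a different and more direct route. Rather than counting edges, it observes that a $2$-cut with both sides of size $\ge 3$ (a ``butterfly cut'') immediately yields a $K_{3,3}$ in $\overline{H}$; so if $H$ is not $3$-connected and has no butterfly cut, every $2$-cut has a side of size $\le 2$. Iterating this on the large side three times produces pairwise non-adjacent sets $B_1,B_2,B_3,A_3$ in $H$, and the complete multipartite graph they span in $\overline{H}$ contains a $3$-connected graph on $\ge 5$ vertices (specifically $K_{2,1,1,1}$, or $K_{4,2,2}$ in a boundary case). This is constructive and self-contained. Amusingly, the inductive proof of your $3n-6$ bound is driven by exactly the same $2$-cut iteration the paper performs explicitly --- so your approach and the paper's are closer than they look, with the paper's version simply avoiding the detour through a global edge count.
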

\begin{proof}
Note that $K_{3,3}$ is a 3-connected subgraph on at least 5 vertices. So if $H$ has a butterfly cut, then we are done by Observation \ref{obs:butterfly_cut}. 
We are also done if $H$ is 3-connected.

So we may assume that there is a cut $U_1$ of size at most $2$ in $H$ and that we can partition $V(H)=A_1\sqcup U_1\sqcup B_1$ with $ 1 \leq |B_1|\leq 2$ such that there are no edges between $A_1$ and $B_1$. Since $|A_1|\geq 8$, we may assume $H[A_1]$ is not 3-connected. 
Again $H[A_1]$ must contain a cut $U_2$ of size at most 2 and we can assume it contains no butterfly cut. Hence, we can find a partition $A_1=A_2\sqcup U_2\sqcup B_2$ with $1 \leq |B_2|\leq 2$ such that there are no edges between $A_2$ and $B_2$ in $H$. Then $|A_2|\geq 4$. If $|A_2|=4$, then $|B_1|=|B_2|=2$ and $\overline{H}[A_2\cup B_1\cup B_2]$ contains the complete 3-partite graph $K_{4,2,2}$, which is 3-connected. So we may assume $|A_2|\geq 5$. We are again done if $H[A_2]$ is 3-connected, so we can split once more into $A_2= A_3\sqcup U_3\sqcup B_3$ with $|A_3|\geq 2$ and $|B_3| \geq 1$. Now $\overline{H}[A_3\cup B_1\cup B_2\cup B_3]$ contains the 3-connected subgraph $K_{2,1,1,1}$ (the complete 4-partite graph with part sizes 2, 1, 1 and 1).
\end{proof}

\begin{lemma}\label{lem:3consubgraphgood}
If a finite graph $H$ contains an induced subgraph on at least 5 vertices that is $3$-connected and not a clique, then the $3^*$-core of either $H$ or $\overline{H}$ is 3-connected and not a clique. 
\end{lemma}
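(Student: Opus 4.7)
The plan is to show that the given 3-connected induced subgraph $C$ is preserved by the $3^*$-core reduction, and then either that $H^*$ itself is 3-connected, or else to use a bipartite-complement argument to locate the required structure inside $(\overline{H})^*$.

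First I would show $C\subseteq H^*$. Every vertex of $C$ has at least three neighbours in $C$, so no vertex of $C$ is ever removed by the degree-$\le 2$ rule while $C$ is still intact. For the twin-rule, every pair $u,v\in V(C)$ satisfies $|N_C[u]\cup N_C[v]|\ge 5$: for non-adjacent $u,v$ this follows from $|N_C(u)|\ge 3$; for adjacent $u,v$, $|N_C(u)\cup N_C(v)|\le 4$ would force $(N_C(u)\cup N_C(v))\setminus\{u,v\}$ to be a 2-cut of $C$ separating $\{u,v\}$ from the (non-empty, since $|V(C)|\ge 5$) remainder, contradicting 3-connectivity. These bounds are preserved in every induced subgraph of $H$ still containing $C$, so indeed $C\subseteq H^*$. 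If $H^*$ is itself 3-connected then we are done, since $H^*\supseteq C$ is not a clique.

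Otherwise $H^*$ has a cut $U$ with $|U|\le 2$, and we may write $V(H^*)=A\sqcup U\sqcup B$ with no $H$-edges between $A$ and $B$; by 3-connectivity of $C$ we may take $C\subseteq A\cup U$, which forces $|A|\ge 3$. The minimum-degree-$\ge 3$ and no-bad-pair conditions of $H^*$ would then give $|B|\ge 3$ and force both $A$ and $B$ to contain an edge of $H$: for instance, if $|B|=2$ the required min-degree $3$ forces $B$ to be a pair of true twins of degree $3$, and if $A$ were $H$-independent, a vertex of $A$ would have all its $\le 2$ neighbours in $U$. Consequently $\overline{H}[A\cup B]$ contains $K_{|A|,|B|}$ as a spanning subgraph and is therefore 3-connected on at least $6$ vertices, and it is not a clique because $A$ has an $H$-edge. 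Applying the first step to $\overline{H}$ with this subgraph yields $\overline{H}[A\cup B]\subseteq(\overline{H})^*$, so in particular $(\overline{H})^*$ is not a clique.

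The main obstacle is upgrading this to: $(\overline{H})^*$ is itself 3-connected, not merely that it contains a 3-connected subgraph. The plan is a case analysis on the set of extra vertices $X:=V((\overline{H})^*)\setminus(A\cup B)$, which can come from $U$ or from $V(H)\setminus V(H^*)$. Since every $v\in X$ has $\deg_{(\overline{H})^*}(v)\ge 3$, adding $v$ to the 3-connected graph $\overline{H}[A\cup B]$ preserves 3-connectivity whenever $v$ has $\ge 3$ neighbours in $A\cup B$. When instead $v$ has $\le 2$ such neighbours, $v$ must have neighbours inside $X$; any potential 2-cut of $(\overline{H})^*$ would then isolate a sub-cluster of $X$, but the no-bad-pair condition forces any two vertices of $X$ with identical $\le 2$-element neighbourhoods in $A\cup B$ (and no other external neighbours) to be true twins of degree $3$ and hence removed. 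Carefully ruling out such configurations and tracking how the vertices of $X$ attach to $A\cup B$ in $\overline{H}$ is the technical heart of the proof.
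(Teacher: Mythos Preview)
Your first two steps, showing $C\subseteq H^*$ and then setting up the partition $V(H^*)=A\sqcup U\sqcup B$ with $C\subseteq A\cup U$, $|A|,|B|\ge 3$, and $\overline{H}[A\cup B]$ 3-connected and not a clique, are correct and match the paper's argument.

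The gap is in your third step. You correctly identify that the remaining task is to show $(\overline{H})^*$ itself is 3-connected, and you propose a case analysis on how vertices of $X=V((\overline{H})^*)\setminus(A\cup B)$ attach to $A\cup B$, worrying about the case where some $v\in X$ has at most two $\overline{H}$-neighbours in $A\cup B$. You then leave this as an unspecified ``technical heart'' to be carried out. But this worry is almost entirely unfounded, and the paper's key observation dissolves it: set $R=V(H)\setminus V(H^*)$. Every vertex (or bad pair) in $R$ was removed during the $3^*$-reduction of $H$ at a stage when the current graph still contained $H^*\supseteq A\cup B$, and at that moment it had at most two $H$-neighbours in the current graph. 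Hence each such vertex has at most two $H$-neighbours in $A\cup B$, so at least $|A\cup B|-2\ge 4$ neighbours in $A\cup B$ in $\overline{H}$. Adding the vertices of $R$ back to $\overline{H}[A\cup B]$ (in reverse removal order, in clusters of size $\le 2$) therefore preserves 3-connectivity at every step, giving that $\overline{H}[A\cup B\cup R]=\overline{H}[V(H)\setminus U]$ is 3-connected and lies inside $(\overline{H})^*$.

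This leaves only the at most two vertices of $U$ to worry about, and now the argument is short: if $(\overline{H})^*$ had a 2-cut, one side would have to lie entirely inside $U$ (the other side contains the 3-connected block on $\ge 6$ vertices), and then either a single vertex of $U$ would have degree $\le 2$ in $(\overline{H})^*$, or the two vertices of $U$ would form a bad pair with $|N[u_1]\cup N[u_2]|\le 4$ there; either way it would have been deleted in forming $(\overline{H})^*$. So no elaborate tracking of ``sub-clusters of $X$'' is needed; the point you are missing is that the removal rule in $H$ translates directly into a high-degree guarantee in $\overline{H}$.
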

\begin{proof}
Let $H$ be a finite graph with an induced subgraph $S$ on at least 5 vertices that is 3-connected and not a clique. The $3^*$-core $H_{3^*}$ of $H$ contains $S$, and is hence not a clique.
Suppose that $H_{3^*}$ is not 3-connected. Then there is a partition $V(H_{3^*})=A\sqcup U\sqcup B$ with $|U|\leq 2$ and no edges between $A$ and $B$. Since $S$ is 3-connected, we may assume that $V(S)\subseteq A\cup U$. This implies that $\overline{H}[A]$ contains at least one non-edge and that $|A|\geq 3$. We will show that the $3^*$-core of $\overline{H}$ is 3-connected.

Since $B$ is part of the $3^*$-core of $H$, it must be the case that $|B|\geq 3$. Thus $\overline{H_{3^*}}[A\cup B]$ is 3-connected and, due to the non-edge in $\overline{H}[A]$, it is not a clique. Consider the set of vertices $R=V(H)\setminus V(H_{3^*})$ that we removed from $H$ to form the $3^*$-core of $H$. We can iteratively add these back in (in clusters of size at most 2) to $\overline{H}[A\cup B]$. When a cluster is added, it has edges to all but at most two of the existing vertices (and in particular degree at least 4 since $|A|+|B|\geq 6$). Hence, $\overline{H}[A\cup B\cup R]$, which is not a clique, is 3-connected and contained in the $3^*$-core of $\overline{H}$. Since $|U|\leq 2$, the last two vertices in $U$ cannot form a separate component in the $3^*$-core of $\overline{H}$. It follows that the $3^*$-core of $\overline{H}$ is 3-connected and not a clique. 
\end{proof}

\begin{proof}[Proof of Theorem \ref{thm:cases}]
Let $H$ be a finite graph on $n\geq 12$ vertices. Let $H_0$ be the largest 3-connected subgraph of $H$. By Lemma \ref{lem:3conRamsey}, we may assume the size of $H_0$ is at least $5$ (switching from $H$ to $\overline{H}$ if necessary).
If $H_0$ is not a clique, then we are done by Lemma \ref{lem:3consubgraphgood}.
So we assume that $H_0$ is a clique. By the maximality of $H_0$, any $u\not\in H_0$ is adjacent to at most 2 vertices in $H_0$. 
We first handle the special case where $H$ consists of at most two vertices plus the clique $H_0$, in which case $|H_0| \geq 10$.
\begin{itemize}
    \item If there are no additional vertices, then $H$ is a clique, which is one of our possible conclusions.
    \item Suppose there is one additional vertex $u_1$. Note that $d(u_1)\leq 2$, since $V(H)=\{u_1\}\sqcup V(H_0)$. Thus $\overline{H}$ is a star (centred at $u_1$) with at most two isolated vertices. Since the centre of the star is a vertex of unique maximum degree, this is one of the possible conclusions.
    \item Suppose that there are two additional vertices $u_1$ and $u_2$. Both have at most two edges to $H_0$. The 2-core of $\overline{H}$ equals $K_{2,p}$ or $K_{1,1,p}$ for some $p\geq |H_0| - 4 \geq 6$, depending on whether there is an edge between $u_1$ and $u_2$ or not.
\end{itemize}
We may now assume that $|V(H)\setminus V(H_0)|\geq 3$. Let $A=V(H_0)$ and $B=V(H)\setminus V(H_0)$, and note that $|A|\geq 5$ and $|B|\geq 3$.
We will show that $\overline{H}$ has a 3-connected induced subgraph with at least 5 vertices which is not a clique so, by \Cref{lem:3consubgraphgood}, the $3^*$-core of either $H$ or $\overline{H}$ is 3-connected and not a clique.

Since each $b\in B$ has at most $2$ edges to $A$ in $H$, it has at least $|A|-2$ edges to $A$ in $\overline{H}$. This means that $\overline{H}$ contains a (not necessarily induced) bipartite subgraph $H'=(A,B,E)$ with $|A|+|B|=12$, with $|A|\geq 5$ and $|B|\geq 3$, and such that each element of $B$ has exactly $|A|-2$ neighbours in $A$.
A computer search shows that there is indeed a 3-connected subgraph on at least 5 vertices in every such bipartite graph.
Clearly, the corresponding vertices induce a 3-connected subgraph of $\overline{H}$ (as this only has more edges than the subgraph of the $H'$) and the subgraph is not a clique as it must contain at least 3 vertices of $A$ and these form an independent set in $\overline{H}$.
\end{proof}

\section{The remaining large graphs}
\label{sec:large}
We now turn our attention to the graphs $H$ satisfying the second or third property of Theorem \ref{thm:cases}, after which we will have proved Theorem~\ref{thm:main} for all graphs $H$ on at least 12 vertices.
\subsection{Fixing operations for \texorpdfstring{$K_{2,p}$}{K\textunderscore\{2,p\}}}\label{sec:K2p}
The fixing operation for $K_{2,p}$ is relatively simple.
\begin{lemma}\label{lem:K2p}
Suppose the $2$-core of $H$ is a copy of $K_{2,p}$ where $p \geq 3$. Then $H$ admits a fixing operation for the class of $K_{2,p}$-free graphs.
\end{lemma}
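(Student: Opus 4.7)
I would first establish the lemma in the base case $H=K_{2,p}$, and then bootstrap to arbitrary $H$ via Lemma~\ref{lem:handle_core}. Label the vertices of $K_{2,p}$ as $\{a,b\}\sqcup\{c_1,\dots,c_p\}$, where $\{a,b\}$ is the $2$-side. For the base case I would follow the blueprint of Lemma~\ref{lem:3-conn}: glue an appropriate infinite blow-up of $K_{2,p}$ onto $G$ along the pair $xy$. For an edge fixing operation at an edge $xy$, apply Corollary~\ref{cor:blowup} with the non-edge $uv=ab$: the blow-up $K_{2,p}^{ab\to c}$ has $ab$ flipped to an edge (matching the edge $xy$ of $G$) and replaces each $c_j$ by an infinite clique $C_j$. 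For a non-edge fixing operation at a non-edge $xy$, apply Lemma~\ref{lem:blowup} with the edge $uv=ac_1$: the blow-up $K_{2,p}^{ac_1\to c}$ has $ac_1$ flipped to a non-edge (matching the non-edge $xy$ of $G$) and replaces $b,c_2,\dots,c_p$ by infinite cliques $B,C_1,\dots,C_{p-1}$. A direct check in the style of the proof of Lemma~\ref{lem:blowup} confirms that each of these blow-ups is $K_{2,p}$-free: any non-adjacent pair meets each of the infinite cliques in its common neighbourhood in a sub-clique, so the maximum stable set among common neighbours is at most $p-1$.

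The main task is to verify that the glued graph $G''$ is itself $K_{2,p}$-free. Since $K_{2,p}$ is only $2$-connected, Observation~\ref{obs:3connglue} does not apply. Instead I would use the fact that for $p\geq 3$ the unique $2$-separator of $K_{2,p}$ is $\{a,b\}$. Hence any induced copy of $K_{2,p}$ in $G''$ straddling the cut $\{x,y\}$ between $V(G)$ and the new vertices must play $\{x,y\}$ in the role of $\{a,b\}$; in particular, $xy$ must be a non-edge and $\{x,y\}$ must admit $p$ pairwise non-adjacent common neighbours in $G''$. In the edge-fixing case this is impossible because $xy$ is an edge. In the non-edge-fixing case, every new vertex of $K_{2,p}^{ac_1\to c}$ inherits a unique glue-adjacency: each vertex of $B$ is adjacent to $y$ but not $x$, and each vertex of each $C_j$ is adjacent to $x$ but not $y$. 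Thus the common neighbours of $\{x,y\}$ in $G''$ coincide with those in $G$, which contain no stable set of size $p$ as $G$ is $K_{2,p}$-free. Combined with the $K_{2,p}$-freeness of both $G$ and the blow-up, this rules out all induced copies of $K_{2,p}$ in $G''$.

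The fixing property itself follows by direct construction. In the edge-fixing case, after a locally finite disturbance turning $xy$ into a non-edge, we greedily pick one $v_j\in C_j$ per clique (for $j=1,\dots,p$) with the edges $xv_j, yv_j$ unaltered and with the pairs $v_jv_{j'}$ unaltered non-edges; this is possible because each vertex has finitely many disturbed pairs while each $C_j$ is infinite, so $\{x,y\}\cup\{v_1,\dots,v_p\}$ induces a copy of $K_{2,p}$. In the non-edge-fixing case, after the disturbance turns $xy$ into an edge, we first pick $b'\in B$ with $xb'$ and $yb'$ unaltered (again possible since $x,y$ have only finitely many disturbed pairs), then pick $v_j\in C_j$ ($j=1,\dots,p-1$) compatibly unaltered with $x, y, b'$ and pairwise unaltered non-edges, so that $\{x,b'\}\cup\{y,v_1,\dots,v_{p-1}\}$ induces a $K_{2,p}$ with $2$-side $\{x,b'\}$.

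To pass from $K_{2,p}$ to a general $H$ whose $2$-core is $K_{2,p}$, observe that $H$ is obtained from $K_{2,p}$ by iteratively adding vertices of degree at most $1<2=\delta(K_{2,p})$, so iterating Lemma~\ref{lem:handle_core}(1) extends the fixing operation from $K_{2,p}$ to $H$ within the class of $K_{2,p}$-free graphs. The main obstacle throughout is the case analysis needed to verify $K_{2,p}$-freeness of both the blow-up and of $G''$, but the $2$-separator observation cleanly reduces the latter to a statement about common neighbours in $G$.
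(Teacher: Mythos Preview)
Your proof is correct and follows essentially the same approach as the paper: reduce to $H=K_{2,p}$ via Lemma~\ref{lem:handle_core}, and for each pair glue an infinite blow-up of $K_{2,p}\pm e$. Your non-edge gadget coincides (up to swapping $x,y$) with the paper's, and your $K_{2,p}$-freeness verification via the unique $2$-cut $\{a,b\}$ is the same idea as the paper's case analysis. The only difference is the edge gadget: the paper simply glues an infinite stable set adjacent to $x,y$ (i.e.\ $K_{2,p}^{ab\to s}$) and rules out a new $K_{2,p}$ by the triangle $xys$, whereas you glue $K_{2,p}^{ab\to c}$ and use the $2$-cut argument; both work for $p\geq 3$.
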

\begin{proof}
By Lemma \ref{lem:handle_core}, we only need to show that there is a fixing operation for $K_{2,p}$: indeed, $H$ can be obtained from $K_{2,p}$ by repeatedly adding vertices of degree at most $1<\delta(K_{2,p})$. 
    
Let $G$ be a $K_{2,p}$-free graph. Suppose first that $xy$ is an unfixed edge. To fix the edge $xy$, add an infinite independent set where every vertex is connected to both $x$ and $y$. This is equivalent to gluing on a copy of $K_{1,1,\infty}$ and any locally finite perturbation which removes the edge $xy$ gives a copy of $K_{2,p}$. It remains to check that we did not create a copy of $K_{2,p}$. Suppose that a newly added vertex $v$ is in a copy of $K_{2,p}$. Any newly added vertex is only adjacent to $x$ and $y$ and so both $x$ and $y$ must be in the copy of $K_{2,p}$. However, then the copy of $K_{2,p}$ contains a triangle, giving a contradiction.
    
Suppose next that $xy$ is an unfixed non-edge. Add a vertex $w$ which is adjacent to $x$ but not to $y$, and then add $p-1$ vertices $w_1, \dots, w_{p-1}$ adjacent to both $w$ and $y$. Finally, we blow up the (new) vertices $w, w_1, \dots, w_{p-1}$ into infinite cliques $W, W_1, \dots, W_{p-1}$. All vertices in $W$ and $y$ are adjacent to all vertices in $W_i$ for $i\in [p-1]$. It is not hard to see that any locally finite perturbation which adds the edge $xy$ creates a copy of $K_{2,p}$. In fact, there is some $w' \in W$ and $w'_i \in W_i$ for $i \in \{1, \dots, p-1\}$ such that $\{y, w'\} \cup \{x, w_1', \dots, w_{p-1}'\}$ forms a copy of $K_{2,p}$. 
    It remains to show that we have not introduced a copy of $K_{2,p}$. Let $u$ and $v$ be the two vertices of $K_{2,p}$ with $p$ neighbours, and consider which vertices could be $u$ in a copy of $K_{2,p}$. Since the $p$ neighbours of $u$ form an independent set, $u \not \in W_i$ for every $i$ as the neighbours of $w_i' \in W_i$ can be decomposed into 2 cliques and $p\geq 3$. 
    A vertex $w' \in W$ only has enough neighbours that form an independent set if $x$ is among the neighbours and there is a  vertex from each $W_i$, but then we cannot choose a suitable $v$. The only other way a copy of $K_{2,p}$ could contain one of the added vertices is for $u$ and $v$ to be $x$ and $y$, but we have not added any common neighbours of $x$ and $y$. So we did not create any copies of $K_{2,p}$ and indeed we fixed the non-edge.
\end{proof}

\subsection{Fixing operation for \texorpdfstring{$K_{1,1,p}$}{K\textunderscore\{1,1,p\}}}\label{sec:K11p}
The edge fixing operation for $K_{1,1,p}$ is more involved and requires an additional assumption about the graphs it is applied to.
\begin{lemma}\label{lem:K11p}
Suppose the $2$-core of $H$ is a copy of $K_{1,1,p}$ where $p \geq 2$. Then $H$ admits a fixing operation for the class of $K_{1,1,p}$-free graphs where the common neighbourhood of any unfixed edge can be decomposed into at most $p-2$ cliques. 
\end{lemma}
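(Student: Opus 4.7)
My plan is to first apply Lemma \ref{lem:handle_core}: since $\delta(K_{1,1,p}) = 2$ and $H$ has $2$-core $K_{1,1,p}$, we obtain $H$ from $K_{1,1,p}$ by iteratively attaching vertices of degree at most $1 < \delta(K_{1,1,p})$. Hence it suffices to exhibit a fixing operation for $K_{1,1,p}$ itself within the stated class.

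For non-edge fixing of an unfixed non-edge $xy$, I will glue on $p$ pairwise non-adjacent infinite stable sets $W_1, \ldots, W_p$, each fully adjacent to $x$ and $y$. I will verify that $G'$ remains $K_{1,1,p}$-free (no new edge has enough independent common neighbours, and $xy$ remains a non-edge so cannot serve as a central edge), and that any disturbance making $xy$ an edge creates $K_{1,1,p}$ via $\{x,y\}$ together with one $w_i \in W_i$ from each $i$, selected greedily to avoid the finitely many disturbed pairs incident to already-chosen vertices. The only new edges are of the form $xw$ or $yw$, which have empty common neighbourhoods since each $w$ has neighbourhood $\{x,y\}$ and $xy$ is a non-edge; the class condition is thus trivially preserved.

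For edge fixing of an unfixed edge $xy$, let $q$ denote the maximum size of an independent set in $N_G(x) \cap N_G(y)$, so $q \leq p-2$ by the class assumption on $G$. The gadget I propose consists of:
\begin{enumerate}
    \item an infinite clique $Y$ fully adjacent to $x$ and $y$, playing the role of a blown-up central edge of the target $K_{1,1,p}$;
    \item for $i = 3, \ldots, p$, an infinite clique $W_i$ fully adjacent to $Y$, pairwise non-adjacent across different $i$, and non-adjacent to $x,y$;
    \item for $j = 1, \ldots, p-2-q$, a ``filler'' infinite clique $F_j$ fully adjacent to $x,y$ and non-adjacent to $Y$, to the $W_i$'s, and to the other $F_k$'s.
\end{enumerate}
A case analysis on potential central edges will show $G'$ is $K_{1,1,p}$-free: the common neighbourhoods of both $xy$ and of any edge internal to $Y$ have independence number exactly $p-1$ by construction, and all other potential central edges have common neighbourhoods contained in a single clique. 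Any locally finite disturbance that removes $xy$ then yields $K_{1,1,p}$ centred at some $ab \in Y$ with independent common neighbours $\{x,y,w_3,\ldots,w_p\}$, where $a, b, w_i$ are chosen sequentially in the infinite sets $Y, W_3, \ldots, W_p$ to avoid the finitely many disturbed pairs incident to already-chosen vertices.

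The main technical hurdle will be verifying the class condition for $G'$. Edges internal to $Y$ have common neighbourhoods of independence number $p-1$, so they do not decompose into $p-2$ cliques and must be shown \emph{fixed}. Removing any such $ab \in Y$ creates $K_{1,1,p}$ centred at $xy$ via the independent common-neighbour set $\{a,b\} \cup I \cup \{f_1, \ldots, f_{p-2-q}\}$, where $I \subseteq N_G(x) \cap N_G(y)$ is a maximum independent set of size $q$ and $f_j \in F_j$; this set has size exactly $2 + q + (p-2-q) = p$, which is precisely why the number of filler cliques is taken to be $p-2-q$. All remaining new edges (of types $xa$, $aw$, $xf$, edges internal to some $W_i$ or $F_j$, and so on) have common neighbourhoods contained in a single clique, so they decompose into $1 \leq p-2$ cliques whenever $p \geq 3$. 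The case $p = 2$ requires a separate, easier argument: the gadget degenerates to just $Y$ (since $p-2 = 0$ and necessarily $q = 0$), and every new edge must be explicitly shown fixed by using an auxiliary vertex of $Y$ as the apex of the resulting $K_4 - e$.
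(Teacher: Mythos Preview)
Your edge-fixing gadget has a genuine gap: the argument that edges inside $Y$ are fixed does not survive locally finite disturbances. To certify that $ab\in Y$ is fixed you produce the copy of $K_{1,1,p}$ centred at $xy$ with independent common neighbours $\{a,b\}\cup I\cup\{f_1,\dots,f_{p-2-q}\}$, but the set $I\subseteq N_G(x)\cap N_G(y)$ is a \emph{fixed} set of $q$ vertices of $G$, not drawn from an infinite reservoir. A locally finite disturbance may simultaneously remove $ab$ and all edges from $x$ to $I$ (this touches $x$ only $|I|+?$ many times), after which your designated copy disappears and no alternative is available. Concretely, take $p=3$ and $G$ the triangle $xyz$; then $q=1$, there are no filler cliques, and your gadget is $G'=\{x,y,z\}\cup Y\cup W_3$. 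The disturbance that removes some $a_0b_0\in Y$ together with $xz$ leaves $z$ of degree~$1$, and in the remaining graph every vertex has a neighbourhood covered by two cliques, so there is no $K_{1,1,3}$ at all. Hence $a_0b_0$ is unfixed, its common neighbourhood needs $p-1$ cliques, and $G'$ is not in the class.

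The paper avoids this by never relying on $N_G(x)\cap N_G(y)$. Its edge-fixing gadget is an infinite $(p-1)$-regular tree rooted at an edge (glued onto $xy$), with every non-root vertex blown up to an infinite clique. Edges inside a blow-up are then fixed using only \emph{new} vertices from neighbouring blow-ups, all of which are infinite, so the greedy avoidance of disturbed pairs always succeeds. Your $Y$ and $W_3,\dots,W_p$ are exactly the first two levels of this tree; what is missing is the recursion that provides, for every blow-up, further infinite cliques one step away. A secondary point: Lemma~\ref{lem:handle_core} is stated only for the full class of $C$-free graphs, so your reduction to $H=K_{1,1,p}$ needs a line checking that gluing degree-$\le 1$ stable sets also preserves the common-neighbourhood condition (it does, since such vertices add no common neighbours to any edge); the paper makes this remark explicitly.
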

\begin{proof}

We maintain throughout that the common neighbourhood of any unfixed edge can be decomposed into at most $p-2$ cliques. For $p=2$, this means that if the edge $uv$ is unfixed, then $N(u)\cap N(v)=\emptyset$. We discuss how to define a fixing operation for $K_{1,1,p}$ then explain how to extend to graphs with this as a $2$-core.

We first handle the case $p=2$, as it is simpler and serves as a warm-up to the case $p \geq 3$. First note that any edge in an infinite clique is fixed, as perturbing it will result in an induced copy of $K_{1,1,2}$ even if this is part of an arbitrary locally finite perturbation. Given an unfixed edge $uv \in E$, we fix it by adding a countably infinite clique completely adjacent to both $u$ and $v$. Since $u$ and $v$ have no common neighbours before the fixing operation, this does not create an induced copy of $K_{1,1,2}$. Given an unfixed non-edge $uv \not\in E$, we fix it by adding a countably infinite independent set completely adjacent to both $u$ and $v$. Note that every new edge, though unfixed, is incident to a vertex of degree $2$ which is not in a triangle, so the endpoints have no common neighbour.

Assume now $p\geq 3$. We first explain how to fix an edge $uv
\in E$. We fix an infinite tree $T$ in which a particular vertex $r$ has degree 1 and all other vertices have degree $p-1$. We blow up all vertices except for $r$ into an infinite clique. We blow up $r$ into an edge and glue this onto $uv$. 

We first show that this fix keeps the graph $K_{1,1,p}$-free.
For an edge $xy$, when $x$ and $y$ correspond to different vertices of $T$, their common neighbourhood can in be decomposed into a single clique. In particular, they cannot be used as the pair of vertices of degree $p+1$ of any copy of $K_{1,1,p}$. 

When $x$ and $y$ correspond to the same vertex of $T$, their common neighbourhood can be decomposed into $p-1$ cliques; this holds for $x=u,y=v$ using our assumption that the common neighbourhood of $u,v$ could originally be covered by at most $p-2$ cliques, and for other pairs using the degree of $T$.  This shows that we did not create any copy of $K_{1,1,p}$ since no pair among the new vertices, $u$ and $v$ can be used as the vertices of degree $p + 1$. 

Next, we show that all edges $x$ and $y$ with $x,y$ corresponding to the same vertex of $T$ have been fixed (this includes the edge $uv$). 
Consider a locally finite perturbation in which $xy$ gets removed. 
Let $s_{p}$ be the vertex in $T$ that $x,y$ correspond to and let $t\neq r$ be a neighbour of $s_p$ in $T$. Let $s_1,\dots,s_{p-2}$ denote the other neighbours of $t$ in $T$.
There must be infinitely many vertices in the blow-up of $t$ that are still adjacent to $x$ and $y$ after the perturbation. We may pick any two of them, say $a$ and $b$. 
Next, we select for each $i\in [p-2]$, a vertex $w_i$ resulting from the blow-up of $s_i$, such that $\{a\}\cup\{b\}\cup\{w_1,\dots,w_{p-2},x,y\}$ forms the desired copy of $K_{1,1,p}$. Note that we can indeed do so since each of $a,b,x,y$ prohibits only finitely many options.

The new unfixed edges are the ones which correspond to different vertices of $T$, and indeed have the common neighbourhood property that we wish to maintain, since their common neighbourhood can be decomposed into a single clique (so at most $p-2$ since $p\geq 3$).

The fixing operation for a non-edge is far simpler. Indeed, let $u,v$ be an unfixed non-edge. We add an infinite independent set with neighbourhood $\{u,v\}$. This ensures that any locally finite perturbation containing the pair $u,v$ results in an induced $K_{1,1,p}$. Note that the graph resulting from the fixing operation is still $K_{1,1,p}$-free and the common neighbourhood of every edge is either unaffected by the fixing or is empty, so the fixing operation for non-edges maintains all the desired properties. 

To extend the fixing operations above to graphs for which the $2$-core is $K_{1,1,p}$ for $p\geq 2$, we unfortunately cannot directly apply Lemma~\ref{lem:handle_core} (due to the property about the common neighbourhoods). However, we may repeat the proof and note that by gluing infinite independent sets fully connected to at most one vertex, we not only maintain the property of being $K_{1,1,p}$-free, but also the property that unfixed edges have no independent set of size $p-1$ in their common neighbourhood.
\end{proof}

\subsection{Forests with a vertex of unique maximum degree}\label{sec:forest}
Let $F$ be a finite forest which has a unique vertex of maximum degree $d>1$ that we denote by $v$. One easy way of preventing a graph $G$ from containing a copy of $F$ is to ensure it has maximum degree strictly less than $d$. Unfortunately, removing an edge from $G$ is not going to increase the maximum degree and create a copy of $F$, so we need to be slightly smarter. Since $F$ is a forest, the neighbourhood of any vertex is an independent set and, instead of simply restricting the maximum degree, we can ensure that no vertex in $G$ has a neighbourhood containing an independent set of size $d$. 

We define the graph $K_\infty^p$ as the infinite graph with vertex set $\mathbb{Z}^p$ and an edge between a vertex $u = (u_1, \dots, u_p)$ and vertex $v = (v_1, \dots, v_p)$ if and only if they disagree in  exactly one coordinate i.e. there is a unique $i$ such that $u_i \neq v_i$. The maximum degree is unbounded, but the neighbourhood of any vertex can be decomposed into $p$ cliques, one for each coordinate, and the largest independent set in the neighbourhood of any vertex is of size $p$. We claim that $K_\infty^{d-1}$ is strongly $F$-induced-saturated.
\begin{lemma}\label{lem:forest}
Let $F$ be a finite forest with a unique vertex of maximum degree $d$. The graph $K_\infty^{d-1}$ is strongly $F$-induced-saturated.
\end{lemma}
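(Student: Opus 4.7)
My plan is to verify the two requirements of strong $F$-induced saturation separately. For $F$-freeness, consider any induced embedding $\phi : F \hookrightarrow K_\infty^{d-1}$. Since $F$ is a forest, $N_F(v)$ is a stable set of size $d$, so its image under $\phi$ is a stable set of size $d$ inside $N_{K_\infty^{d-1}}(\phi(v))$. But every neighbourhood of $K_\infty^{d-1}$ decomposes as the disjoint union of the $d-1$ infinite cliques $C_1(\phi(v)), \dots, C_{d-1}(\phi(v))$ with no edges between them, so its largest stable set has size only $d-1$, a contradiction.

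For the saturation direction, fix a locally finite disturbance $G'$ of $K_\infty^{d-1}$ and a disturbed pair $xy$. The main task is to locate a vertex $w$ whose $G'$-neighbourhood contains a stable set $S$ of size $d$; I will then set $\phi(v) = w$ and map $N_F(v)$ bijectively onto $S$. If $xy \notin E(K_\infty^{d-1})$ is a newly added edge, then $x$ and $y$ differ in at least two coordinates; I take $w = x$ and, in each clique $C_i(x)$, pick a vertex $z_i$ that is non-adjacent to $y$ in $K_\infty^{d-1}$ (at most one vertex of $C_i(x)$ is adjacent to $y$) and such that the pairs $xz_i$, $yz_i$ and $z_iz_j$ are undisturbed in $G'$; each constraint forbids only finitely many choices by local finiteness, so $\{y, z_1, \dots, z_{d-1}\}$ is a stable set of size $d$ in $N_{G'}(w)$. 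If instead $xy \in E(K_\infty^{d-1})$ has been removed, then $x$ and $y$ differ in a unique coordinate $i$; I pick a common $K_\infty^{d-1}$-neighbour $w$ of $x$ and $y$ whose adjacencies to both are undisturbed (infinitely many such $w$ lie on the line through $x, y$). Then $x, y \in C_i(w)$ are non-adjacent in $G'$, and picking $t_j \in C_j(w)$ for each $j \neq i$ with undisturbed adjacencies to $w$, $x$, $y$ and to each other yields a stable set $\{x, y\} \cup \{t_j : j \neq i\}$ of size $d$ in $N_{G'}(w)$.

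I then extend $\phi$ to all of $F$ greedily by BFS inside the component of $v$, and separately from an arbitrary root in each remaining component. When embedding a new vertex $h$ whose BFS parent $p$ already lies at $p'$, I need $\phi(h) \in N_{G'}(p')$ and $\phi(h) \notin N_{G'}(\phi(q))$ for every other already-embedded $q$; I choose a clique $C_k(p')$ and then take $\phi(h) \in C_k(p')$. The delicate bookkeeping is to show such a clique is always available: the BFS parent of $p$ lies in at most one clique of $p'$, and since $\deg_F(p) - 1 \le d - 2$ children of $p$ remain to be embedded against $d - 1$ cliques, a fresh clique is always free. Inside that infinite clique, only finitely many vertices are adjacent in $K_\infty^{d-1}$ to some already-embedded image (at most one per image per clique) and only finitely many share a disturbed pair with one, so a valid $\phi(h)$ exists. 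For a component root $h$ I place $\phi(h)$ at a point of $\mathbb{Z}^{d-1}$ whose coordinates are sufficiently far from those of any previously placed image, forcing non-adjacency in $K_\infty^{d-1}$, and then avoid the finitely many disturbed pairs with prior images. The fundamental obstacle throughout is that at the root $v$ the $d$ required neighbours do not fit in $N_{K_\infty^{d-1}}(\phi(v))$ as a stable set (whose maximum size is $d-1$); the disturbance $xy$ supplies the missing ``virtual slot'' in exactly the two ways above, making $y$ a new $G'$-neighbour of $x$ in Case A, or packing $x, y$ non-adjacently into a single clique in Case B.
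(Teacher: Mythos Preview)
Your proof is correct and follows essentially the same strategy as the paper: you rule out $F$ via the clique decomposition of neighbourhoods, embed $v$ at an endpoint of the disturbance (added-edge case) or at a common neighbour on the line through $x,y$ (removed-edge case) to obtain a stable set of size $d$ in its $G'$-neighbourhood, and then greedily extend along the tree coordinate by coordinate, handling extra components separately. Your added-edge case is in fact spelled out more carefully than in the paper, which only sketches it. One small remark: the claim ``at most one per image per clique'' is not literally true for the special neighbours set up in Case~B (e.g.\ $y \in C_i(x)$, so every vertex of $C_i(x)$ is adjacent to $y$), but since you correctly avoid that clique when embedding children of $x$, the argument goes through unchanged.
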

\begin{proof}
As argued above, $K_\infty^{d-1}$ does not contain $F$ since the largest independent set in any neighbourhood is smaller than the maximum degree in $F$. It remains to show that making any locally finite edit creates an induced copy of $F$. 

We first consider the case in which $F$ is a tree.
Let $v$ be the unique vertex of maximum degree and suppose we remove an edge. Without loss of generality, let the edge be from $(1, 0, \dots, 0)$ to $(2, 0, \dots, 0)$. 
There must exist an integer $i_v$ such that $(i_v,0,\dots,0)$ is still connected to both $(1,0,\dots,0)$ and $(2,0,\dots,0)$. Here we will embed our vertex $v$. Similarly, there are
integers $j_3, \dots,j_d$ such that 
after the locally finite edit, there are still edges from $(i_v,0,\dots,0)$ to 
\[
(i_v,j_3,0,\dots,0),(i_v,0,j_4,\dots,0),\dots,(i_v,0,0,\dots,j_d)
\]
and the vertices displayed above together with $(1,0,\dots,0),(2,0,\dots,0)$ form an independent set. We embed the $d$ neighbours of $v$ into this independent set of size $d$.

We now explore the tree, heading out from $v$ and assigning each vertex to a vertex in $\mathbb{Z}^{d-1}$ as we go. Suppose we have reached the vertex $u = (u_1, \dots, u_{d-1})$ which disagrees with the previous vertex in coordinate $i$. There are at most $d' \leq d-1$ neighbours of $u$, say $w_0, w_1, \dots, w_{d'-1}$, and we have already seen one of them, say $w_0$. We must assign to each $w_j$ with $j\in [d]$ a vertex in $\mathbb{Z}^{d-1}$. First choose $d'-1$  distinct coordinates $p_1, \dots, p_{d'-1}$, none of which are equal to $i$, and set $w_j$ to be equal to $u$ except in position $p_j$. For position $p_j$, we choose an integer which ensures that $w_j$ is not adjacent to any of the vertices we have embedded except for $u$. Since at each step, there are only finitely many ``bad integers'' to choose out of infinitely many, we can always select an appropriate one. 

Adding an edge can be handled similarly: we embed $v$ into an endpoint $x$ of the edge and note that the added edge again makes it possible to embed the $d$ neighbours of $v$ as an independent set in the neighbourhood of $x$.

For arbitrary forests $F$, let $T$ be the connected component of $F$ which contains the unique vertex of maximum degree. We showed above that any locally finite edit creates a copy of $T$. All other connected components of $F$ have maximum degree at most $d-1$. After embedding $T$, it is straightforward to embed the other components in $K_\infty^{d-1}$ in a way that there are no edges to the vertices used for $T$.
\end{proof}

\subsection{Scheduling the fixes}\label{subsec:scheduling}
The following lemma is straightforward but important. A key part of the proof is that any bad pair is fixed in a finite number of steps. This ensures every bad pair is fixed in the limit. 
\begin{lemma}
\label{lem:fix_to_strong}
Suppose that a finite graph $H$ admits a fixing operation for a non-empty class of $H$-free graphs. Then there exists a strongly $H$-induced-saturated graph.
\end{lemma}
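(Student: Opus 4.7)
The plan is to iteratively build a sequence $G_0\subseteq G_1\subseteq G_2\subseteq\cdots$ of graphs in $\mathcal{F}$, each obtained from its predecessor by a single application of the (edge or non-edge) fixing operation to some pair, and take $G_\infty=\bigcup_{i\geq 0}G_i$ as the candidate graph. Starting from any $G_0\in\mathcal{F}$ (which exists by hypothesis), the union $G_\infty$ is automatically $H$-free: any would-be induced copy of $H$ would use only finitely many vertices, all lying in some $G_k\in\mathcal{F}$, contradicting the $H$-freeness of $G_k$.

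The key observation, which I would call monotonicity of fixedness, is that if a pair $\{u,v\}$ with $u,v\in V(G_k)$ is fixed in $G_k$, then it is still fixed in $G_\infty$. Indeed, given any locally finite disturbance $G'$ of $G_\infty$ that disturbs $\{u,v\}$, its restriction to $V(G_k)$ is a locally finite disturbance of $G_k$ that disturbs $\{u,v\}$ (local finiteness is inherited, and $\{u,v\}$ lies inside $V(G_k)$, so at least one pair is disturbed), and hence it already contains an induced copy of $H$, which persists in $G'$. It therefore suffices to schedule the fixing operations so that every pair of $V(G_\infty)$ is processed (and hence fixed) at some finite stage.

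The main obstacle is precisely this scheduling: a single fixing operation may add countably many new vertices and thus countably many new pairs, so a naive FIFO queue could starve pairs that appear late. To circumvent this, I would fix a bijection $\rho:\mathbb{N}_{\geq 0}\times\mathbb{N}_{\geq 1}\to\mathbb{N}_{\geq 1}$ (for instance, a Cantor pairing) and assign distinct priorities as follows: the pairs of $V(G_0)$ are enumerated as $P_{0,1},P_{0,2},\dots$ with $P_{0,j}$ receiving priority $\rho(0,j)$, and the new pairs created at stage $k\geq 1$ (those involving at least one newly added vertex) are enumerated as $P_{k,1},P_{k,2},\dots$ with $P_{k,j}$ receiving priority $\rho(k,j)$. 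At each stage of the construction, I apply the fixing operation of the appropriate type to the already-created unprocessed pair of smallest priority.

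To verify every pair is processed in finite time, fix a priority $p\geq 1$: since $\rho$ is injective, at most $p$ pairs ever receive priority $\leq p$. Let $c^{\ast}$ be the maximum creation stage among these finitely many pairs; from stage $c^{\ast}$ onwards all pairs of priority $\leq p$ already exist, and each stage processes one unprocessed pair of priority $\leq p$ (since the minimum unprocessed priority is $\leq p$ until all such pairs are done), so the pair of priority $p$ is processed by stage $c^{\ast}+p$. Processing either confirms the pair is already fixed or fixes it via the fixing operation; either way, the pair becomes fixed in the resulting graph, and by the monotonicity observation it remains fixed in $G_\infty$. Thus every pair of $V(G_\infty)$ is fixed, so $G_\infty$ is strongly $H$-induced-saturated.
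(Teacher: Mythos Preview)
Your proof is correct and follows essentially the same approach as the paper: build an increasing chain in $\mathcal{F}$ by repeatedly applying the fixing operation, use a diagonal-type priority scheme so that every pair (including those born at arbitrarily late stages) is eventually processed, and observe that fixedness is monotone under passing to induced supergraphs so that the union is strongly $H$-induced-saturated. The paper's priority order is ``sort by $i+j$, then by $j$'' on the bad pairs appearing at stage $i$, which is just another incarnation of your Cantor pairing; your explicit monotonicity argument (restricting a locally finite disturbance of $G_\infty$ to $V(G_k)$) is exactly what the paper is invoking when it says a pair fixed in $G_i$ stays fixed in $G$.
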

\begin{proof}
We define a sequence of graphs $G_1\subseteq G_2 \subseteq \dots$ in $\mathcal{F}$ and we define the graph $G$ as $\cup_{i=1}^\infty G_i$.

Let $G_1\in \mathcal{F}$. We keep track of a set $F$ of pairs that need to be fixed with a priority order (described later in this proof), which is initialised as the (countable) set of unfixed pairs in $G_1$. 
Suppose we have defined $G_i\in \mathcal{F}$ for some integer  $i\geq1$. If all pairs $xy\in G_i$ are fixed, we may set $G_t=G_i$ for all $t\geq i$ and $G=G_i$ is our desired graph.
Otherwise, let $xy\in G_i$ be an unfixed pair of highest priority in $F$. We let $G_{i+1}$ be the graph obtained from applying the fixing operation for this pair. 

Next, we describe how to define the priority order in such way that every bad pair is eventually fixed. We first fix an enumeration $<$ on $\mathbb{N}\times \mathbb{N}$, e.g. 
\[
(1,1)<(1,2)<(2,1)<(2,2)<(3,1)<(3,2)<(3,3)<\dots
\]
At any point of our sequence, $G_i$ is a countable graph and when created, we may enumerate the vertex pairs of $G_i$. We prioritise the $j$th vertex pair of $G_i$ above the $j'$th vertex pair of $G_{i'}$ if and only if $(i,j)<(i',j')$. At any point, we perform next the fix of highest priority among those that involve a pair of vertices both existing in the current graph and for which the vertex pair is currently unfixed. (If desired, we can also allow to fix vertex pairs that are already fixed.)

By definition of a fixing operation for $xy$, if $G_i\in \mathcal{F}$, then $G_{i+1}\in \mathcal{F}$ and the pair $xy$ is fixed in $G_{i+1}$. Moreover, any pair which was fixed in $G_i$ remains fixed in $G_{i+1}$ since $G_i$ is an induced subgraph of $G_{i+1}$.
By putting the priority order on $F$, we ensure that for any pair $xy$ in $G$, there is a finite $i$ such that $xy$ is a fixed pair in $G_i$, so in particular $xy$ is a fixed pair in $G$. 

Since no copy of $H$ can be created in $G_i$ for all $i$, $G$ is also $H$-free. This proves that $G$ has the desired properties.
\end{proof}

\section{The remaining small graphs}
\label{sec:small}
In this section we consider graphs on at most 11 vertices. While many of these graphs can be handled using the lemmas and theorems above, there are still several small graphs which we still need to consider.
To this end, we introduce two new constructions to handle families of graphs (in \Cref{sec:upandright,sec:torero}), and a method for checking for fixing operations with the help of a computer (in \Cref{sec:computermagic}). This still leaves a total of eight graphs, four graphs and their complements, which we handle individually with two more constructions at the end of this section.

Before we give the new constructions and the method to find fixing operations, let us briefly summarise the steps in the computer search. For each graph $G$ on at most 11 vertices, we check if any of the following hold.
\begin{enumerate}
    \item The graph $H$ is a non-empty forest with a unique vertex of maximum degree (\Cref{lem:forest}).
    \item The 2-core of $H$ is a copy $K_{2,p}$ with $p \geq 3$ (\Cref{lem:K2p}).
    \item The 2-core of $H$ is a copy of $K_{1,1,p}$ where $p \geq 2$ (\Cref{lem:K11p}).
    \item The $3$-core of $H$ is 3-connected and not a clique (\Cref{cor:3starcore}).
    \item The $(1,1)$-core of $H$ is a copy of $P_4$ or the bull graph (\Cref{thm:bull}).
    \item The graph $H$ is ``close to'' a permutation graph (\Cref{thm:upandright}).
    \item \label{it:7} The $2$-core, $3$-core, $2$-edge-core or $2$-non-edge-core have fixing operations (see \Cref{sec:computermagic}).
\end{enumerate}
    If none of these hold for $H$, we check if any of them hold for its complement $\overline{H}$. As mentioned earlier, these checks leave just eight graphs, \texttt{E?qw}, \texttt{F?S|w}, \texttt{F?q|w} and \texttt{F?q~w}, and their complements, which we resolve in \Cref{sec:circ,sec:z3}. 

    The code used for this computer check is attached to the arXiv submission. 

\subsection{The up and right graph}
\label{sec:upandright}
We start with a construction which we call the \emph{up and right graph}, which handles all graphs which are ``close to" a permutation graph. A graph $G$ on the vertices $\{v_1, \dots,v_n\}$ is a \emph{permutation graph} if there is a permutation $\sigma \in S_n$ such that, for every $i < j$, the edge $v_iv_j$ is present if and only if $\sigma(i) < \sigma(j)$. A graph is \emph{close to} a permutation graph if it is not a permutation graph, but adding or removing any edge creates a permutation graph. 

The \emph{up and right graph} is the graph on $\mathbb{Q} \times \mathbb{Q}$ where the vertex $(p,q)$  is connected to  $(s,t)$ if and only if one of the following holds

\begin{itemize}
    \item $(q + r \sqrt2) < (s + t \sqrt2)$ and $(q - r \sqrt2) < (s - t \sqrt2)$
    \item $(q + r \sqrt2) > (s + t \sqrt2)$ and $(q - r \sqrt2) > (s - t \sqrt2)$
\end{itemize}

That is, two vertices are connected if one of the vertices is up and right of the other after the linear transformation $(q,r) \mapsto (q + r \sqrt2, q - r \sqrt2)$ has been applied. Our motivation for this is to ensure that no two vertices can differ in exactly one coordinate. Indeed, if \[q \pm r \sqrt2 = s \pm t \sqrt2 \] where $q,r,s,t \in \mathbb{Q}$, then we must have $(q,r) = (s,t)$. This means that given $n$ distinct vertices $v_i = (q_i, r_i)$ for $i \in [n]$, we can order them such that \[q_i + r_i \sqrt2 < q_j + r_j \sqrt2 \] whenever $i < j$.

Similarly, there is also an order based on the other coordinate, and there is a unique permutation $\sigma \in S_n$ such that \[ q_{i} - r_{i} \sqrt2 < q_{j} - r_{j} \sqrt2 \]
whenever $\sigma(i) < \sigma(j)$. With this notation, there is an edge between $v_i$ and $v_j$, where $i < j$, exactly when $\sigma(i) < \sigma(j)$. This means that any induced subgraph is a permutation graph.

The following lemma shows that every permutation graph can be found as an induced subgraph of the up and right graph, giving us a characterisation of exactly which finite graphs appear as induced subgraphs.

\begin{lemma}\label{lem:perm}
A graph $H$ is an induced subgraph of the up and right graph if and only if it is a permutation graph.
\end{lemma}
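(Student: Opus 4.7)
The forward direction (every finite induced subgraph of the up and right graph is a permutation graph) was already established in the paragraph preceding the lemma. So the plan is to prove the converse: every finite permutation graph embeds as an induced subgraph.

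First I would recast the adjacency rule geometrically. Define the $\mathbb{R}$-linear map $T \colon \mathbb{Q}^2 \to \mathbb{R}^2$ by $T(q,r) = (q + r\sqrt{2},\, q - r\sqrt{2})$. Under this map, two vertices $(q,r)$ and $(s,t)$ of the up and right graph are adjacent if and only if $T(q,r)$ and $T(s,t)$ are comparable in the strict coordinatewise (dominance) partial order on $\mathbb{R}^2$. The next step is to observe that the image $T(\mathbb{Q}^2) = \{(a + b\sqrt{2},\, a - b\sqrt{2}) : a,b \in \mathbb{Q}\}$ is dense in $\mathbb{R}^2$. Indeed, given any target $(u,v) \in \mathbb{R}^2$ and any $\delta > 0$, we can pick rationals $a$ close to $(u+v)/2$ and $b$ close to $(u-v)/(2\sqrt{2})$ to make $T(a,b)$ within distance $\delta$ of $(u,v)$.

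Next, given a permutation graph on $n$ vertices associated with $\sigma \in S_n$, I would consider the canonical placement $p_i = (i,\, \sigma(i)) \in \mathbb{R}^2$ for $i \in [n]$. For $i \neq j$, the two points $p_i$ and $p_j$ are coordinatewise comparable precisely when ($i<j$ and $\sigma(i)<\sigma(j)$) or ($i>j$ and $\sigma(i)>\sigma(j)$), which is exactly the edge condition of the permutation graph. So if I could use the $p_i$ themselves as vertices, I would be done — the only issue is that they need not lie in $T(\mathbb{Q}^2)$.

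Finally, I would fix this via a perturbation argument. Since the coordinates of the $p_i$ are all distinct, there is some $\epsilon > 0$ strictly less than one third of the minimum nonzero difference of $x$- or $y$-coordinates. By density of $T(\mathbb{Q}^2)$, I can choose for each $i$ a point $p_i' \in T(\mathbb{Q}^2)$ with $\|p_i' - p_i\|_\infty < \epsilon$. By the triangle inequality, every strict coordinatewise comparison among the $p_i$ is preserved in the $p_i'$, so setting $v_i = T^{-1}(p_i') \in \mathbb{Q}^2$ yields $n$ vertices of the up and right graph whose induced subgraph is precisely the given permutation graph. The argument is essentially routine; the only mild point is recognising that only the density of $T(\mathbb{Q}^2)$ in $\mathbb{R}^2$ is needed (the specific algebraic form of the $\sqrt{2}$-coordinates plays no further role) and that strict inequalities are preserved under small perturbations.
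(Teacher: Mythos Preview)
Your proof is correct and follows essentially the same approach as the paper: both place the vertices at (perturbations of) the preimages of the points $(i,\sigma(i))$ under the linear map $(q,r)\mapsto(q+r\sqrt{2},\,q-r\sqrt{2})$, and then invoke density to land on rational points while preserving all strict coordinate comparisons. The only cosmetic difference is that you perturb in the image using density of $T(\mathbb{Q}^2)$ in $\mathbb{R}^2$, whereas the paper perturbs in the domain using density of $\mathbb{Q}^2$; since $T$ is a linear isomorphism these are equivalent.
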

\begin{proof}
We already argued that any induced subgraph is a permutation graph, so it only remains to show how to find a copy of any given permutation graph. Let $\sigma$ be a permutation that defines the permutation graph $H$ and consider the $n$ points \[\tilde{v}_i = \left( \frac{i + \sigma(i)}{2} , \frac{i - \sigma(i)}{2 \sqrt{2}} \right) \text{ for }i\in [n].\] 
The coordinates are chosen so that
\[
\frac{i + \sigma(i)}{2}+\frac{i - \sigma(i)}{2 \sqrt{2}} \sqrt{2} = i
\]
and 
\[
\frac{i + \sigma(i)}{2}-\frac{i - \sigma(i)}{2 \sqrt{2}} \sqrt{2}=\sigma(i).
\]
Ideally, we would take the $\tilde{v}_i$ as the vertices in the up and right graph, but the $\tilde{v}_i$ are not necessarily rational. However, the rational points are dense in $\mathbb{R}^2$ and we can choose vertices of the up and right graph which are sufficiently close to the points $\tilde{v}_i$ to keep the expected adjacencies. This provides a subgraph isomorphic to~$H$.
\end{proof}

Given this lemma is it not hard to prove the following theorem classifying the graphs which are strongly saturating in the up and right graph.

\begin{theorem}\label{thm:upandright}
The up and right graph is strongly $H$-induced-saturated if and only if the following hold.
\begin{enumerate}
    \item $H$ is not a permutation graph.
    \item There exists an edge $e$ such that $H-e$ is a permutation graph.
    \item There exists a non-edge $\overline{e}$ such that $H + \overline{e}$ is a permutation graph.
\end{enumerate}
\end{theorem}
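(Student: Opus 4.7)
The plan is to prove the two directions separately, with the backward direction containing the substantive work.

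For the forward (necessity) direction, suppose the up and right graph $G$ is strongly $H$-induced-saturated. By Lemma~\ref{lem:perm}, every induced subgraph of $G$ is a permutation graph, so $H$-freeness of $G$ immediately gives condition~(1). For conditions~(2) and~(3) I specialise to single-pair disturbances, which are locally finite. Fix any edge $uv$ of $G$ and consider an induced copy of $H$ in $G - uv$ guaranteed by strong saturation. Any such copy must contain both $u$ and $v$ and place $uv$ as a non-edge of the copy, since otherwise the induced subgraph on that vertex set in $G$ would coincide with the copy in $G - uv$ and would already give an induced copy of $H$ in $G$. Hence the same vertex set in $G$ induces a graph isomorphic to $H + \bar{e}$ for the non-edge $\bar{e}$ of $H$ corresponding to $uv$, and this is a permutation graph by Lemma~\ref{lem:perm}. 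This gives~(3); an analogous argument applied to a single added non-edge of $G$ gives~(2).

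For the backward direction, the central technical step is the following embedding claim: for any two vertices $u, v$ of $G$ and any finite permutation graph $P$ with two distinguished vertices $x, y$ such that the pair $xy$ in $P$ has the same type (edge or non-edge) as $uv$ in $G$, there is an induced embedding of $P$ into $G$ sending $\{x, y\}$ to $\{u, v\}$. To prove this, take a realisation of $P$ as points $\tilde{p}_1,\dots,\tilde{p}_n \in \mathbb{R}^2$ with all first coordinates distinct and all second coordinates distinct (as in the proof of Lemma~\ref{lem:perm}). The matching-type hypothesis ensures that, after possibly swapping the labels of $u$ and $v$, the pair $(\tilde{p}_x, \tilde{p}_y)$ and the transformed coordinates of $(u, v)$ agree in their relative orderings on each axis. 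Because permutation-graph adjacency depends only on these pairwise orderings, we can apply strictly increasing rescalings to each axis to send $\tilde{p}_x$ and $\tilde{p}_y$ exactly to the transformed coordinates of $u$ and $v$; density of $\mathbb{Q}$ then lets us perturb the remaining points to rational ones while preserving every pointwise comparison, yielding the required embedding into $G$.

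With this in hand, assume (1)--(3) hold. $H$-freeness of $G$ follows from (1) and Lemma~\ref{lem:perm}. Given any locally finite disturbance $G'$ of $G$, fix a disturbed pair $uv$. If $uv$ was an edge in $G$, use~(3) to pick a non-edge $\bar{e} = xy$ of $H$ with $P := H + \bar{e}$ a permutation graph. By the embedding claim, $P$ embeds in $G$ with $\{x, y\}$ going to $\{u, v\}$. We strengthen this to an embedding whose remaining $n-2$ vertices also avoid every disturbed pair of the previously chosen vertices. This is done greedily: the correct position for the next image lies in an open region of the transformed plane determined by the partial order against the already-placed images, and we need only avoid the finitely many disturbed neighbours of each previously placed vertex; since each such region contains infinitely many rational points, an allowable choice always exists. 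The resulting induced copy of $H + \bar{e}$ in $G$ then restricts to an induced copy of $H$ in $G'$, since the only altered pair among the chosen vertices is $uv$. The symmetric case in which $uv$ was a non-edge in $G$ uses condition~(2).

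The main obstacle is the embedding claim itself: establishing that any permutation graph can be realised in $G$ with two vertices sent to prescribed targets of matching type. Once that flexibility is in place, the greedy avoidance of a locally finite set of disturbed pairs is routine using density of $\mathbb{Q}$ in $\mathbb{R}$.
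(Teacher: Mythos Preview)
Your proof is correct and follows essentially the same approach as the paper's: both directions match, and for sufficiency you realise the required permutation graph with two vertices pinned to the disturbed pair via a monotone affine rescaling of each coordinate (the paper does this with an explicit scaling formula and a sign argument rather than your ``swap labels then apply increasing maps'' phrasing), followed by a density-based greedy choice of the remaining vertices to avoid the finitely many disturbed pairs. The only cosmetic difference is that you isolate the two-point embedding as a standalone claim, whereas the paper carries out the construction inline.
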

\begin{proof}
It is easy to see these conditions are necessary. Indeed, the up and right graph cannot contain a copy of $H$ so, by Lemma \ref{lem:perm}, $H$ is a not a permutation graph. Adding an edge $e$ to the up and right graph must create a copy of $H$ and $e$ corresponds to some edge $e'$ of this copy of $H$. This means there is an induced copy of $H - e'$ in the up and right graph and $H - e'$ is a permutation graph. Similarly, there must be a non-edge $\overline{e}$ of $H$ such that $H + \overline{e}$ is a permutation graph.

We now show that the conditions are sufficient. Since $H$ is not a permutation graph there is no copy of $H$ in the up and right graph, and it remains to show that making any locally finite edit creates a copy of $H$. Let us first consider adding a single edge. By assumption there is an edge $e = v_i v_j$ of $H$ such that $H-e$ is a permutation graph, and we claim that this is enough to guarantee the existence of a copy of $H$ after adding an edge.

Suppose $H-e$ is a permutation graph and let $f$ be a non-edge from $(q,r)$ to $(s,t)$ in the up and right graph.
The up and right graph is vertex transitive so we can assume that $(q,r) = (0,0)$. We now repeat the construction from Lemma \ref{lem:perm}, but with the addition of extra scaling factors. Define

\[ k' = (k -i) \frac{s + t\sqrt2}{j - i} \text{ and } \sigma'(k) = (\sigma(k) - \sigma(i)) \frac{s - t\sqrt2}{\sigma(j) - \sigma(i)},  \]
and let 
\[ \tilde{v}_k = \left( \frac{k' + \sigma'(k)}{2}, \frac{k' - \sigma'(k)}{2 \sqrt{2}}\right).\] Note that by our choice of the scaling factors, we have $\tilde{v}_i = (0,0)$ and $\tilde{v}_j = (s,t)$, so that $e$ corresponds to $f$. Since $e$ is not an edge in the permutation graph, $j - i$ and $\sigma(j) - \sigma(i)$ have opposite signs. Likewise, since $f$ is not an edge in the up and right graph, ${s + t\sqrt2}$  and ${s - t\sqrt{2}}$ also have opposite signs, and the scaling factors both have the same sign. Hence, if the $\tilde{v}_k$ were vertices in the up and right graph, we would have found an induced subgraph isomorphic to $H - e$ and we would be done. Instead, we may need to perturb the $\tilde{v}_k$ by a small amount for $k \neq i, j$ to make sure that they are rational.

Using an almost identical argument one can show that removing an edge creates a copy of $H$ provided that there is a non-edge $\overline{e}$ of $H$ such that $H + \overline{e}$ is a permutation graph.

So far we have only shown that the graph is induced-saturated for $H$ and contains a copy of $H$ when a single perturbation is made, but it is not hard to adapt the proof to handle locally finite perturbations. We begin as before, setting $v_i$ and $v_j$ as $\tilde{v}_i$ and $\tilde{v}_j$ respectively. We then choose each $v_k$ in turn. At each step, there are infinitely many choices for $v_k$ that are sufficiently close to $\tilde{v}_k$. Since the edit is locally finite and we have only picked finitely many vertices so far, we can choose such a vertex (in fact, all but finitely many work) for which none of the adjacencies to the vertices we have already chosen have been altered. 
\end{proof}

\subsection{The torero graph}
\label{sec:torero}

In this section, we give a construction which is strongly-$H$-induced-saturated whenever the $(1,1)$-core is a copy of the bull graph (see \Cref{fig:bull}) or $P_4$. 

\begin{figure}[ht!]
    \centering
    \begin{tikzpicture}
        \draw (-0.5, 0.866) node(0)[style=vertex]{};
        \draw (-0.5, 0.866) node[above]{$x$};
        \draw (0.5, 0.866) node(1)[style=vertex]{};
        \draw (0.5, 0.866) node[above]{$y$};
        \draw (0, 0) node(2)[style=vertex]{};
        \draw (0, 0) node[below]{$z$};
        \draw (-0.5,0.866) ++ (160:1) node(3)[style=vertex]{};
        \draw (-0.5,0.866) ++ (160:1) node[left]{$a$};
        \draw (0.5, 0.866) ++ (20:1) node(4)[style=vertex]{};
        \draw (0.5, 0.866) ++ (20:1) node[right]{$b$};
        
        \draw (0) -- (1);
        \draw (0) -- (2);
        \draw (1) -- (2);
        \draw (0) -- (3);
        \draw (1) -- (4);
    \end{tikzpicture}
    \caption{The bull graph.}
    \label{fig:bull}
\end{figure}
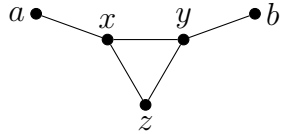

The \emph{torero graph} has vertex set $\mathbb{Q} \cap (0,1)$ and there is an edge from $x$ to $y$ if and only if $x + y  > 1$. 
\begin{theorem}\label{thm:bull}
The torero graph is strongly $H$-induced-saturated whenever the $(1,1)$-core of $H$ is a copy of the bull graph or $P_4$.
\end{theorem}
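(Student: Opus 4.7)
The proof will follow the two-part template used throughout the paper. First, I would show that the torero graph $T$ is $H$-free. Since $T$ is threshold-like — for $x<x'$ in $\mathbb{Q}\cap(0,1)$, any $w$ with $w+x>1$ also satisfies $w+x'>1$, so neighbourhoods in $T$ are nested by inclusion — $T$ is $P_4$-free. The bull graph contains the induced $P_4$ given by $a$--$x$--$y$--$b$, so $P_4$ is an induced subgraph of the $(1,1)$-core of $H$ and hence of $H$ itself. Therefore $T$ is $H$-free.

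For the main direction, given a locally finite disturbance $D$ of $T$, let $T' := T\triangle D$. The plan is to embed an induced copy of $H$ in $T'$ in two phases. In the first phase I find an induced copy of the $(1,1)$-core $C$ of $H$ inside $T'$, and in the second phase I extend this copy to a full copy of $H$ by successively re-attaching the vertices that were removed when forming the core. The second phase should be routine: since $C$ is the $(1,1)$-core of $H$, we may build $H$ from $C$ by iteratively attaching vertices which, at the moment of attachment, are isolated or universal in the current graph. When an isolated attachment is required and $S$ is the finite set of vertices already embedded, any rational $w<\min_{v\in S}(1-v)$ satisfies $w+v\le 1$ for every $v\in S$, so $w$ is non-adjacent to $S$ in $T$; by local finiteness of $D$, only finitely many $w$ in this range lie in a disturbed pair with some element of $S$, so a valid $w$ exists. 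A symmetric argument using rationals close to $1$ handles the universal case.

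The main difficulty is the first phase. Since $T$ is $P_4$-free (hence also bull-free), any induced copy of $C$ in $T'$ must use both endpoints of at least one disturbed pair $\{u,v\}\in D$, with $\{u,v\}$ playing the role of a specific edge or non-edge of $C$. The plan is to fix such a pair, assign it to a prescribed location in $C$, and then select the remaining core vertices from open rational subintervals of $(0,1)$ dictated by the threshold condition and the required adjacencies in $C$. For $C=P_4$ and a disturbance adding the edge $uv$ with $u<v$ and $u+v<1$, the $P_4$ given by $u$--$v$--$a$--$b$ with $a\in(1-v,1-u]\cap\mathbb{Q}$ and $b\in(1-a,1-v]\cap\mathbb{Q}$ works; the edge-deletion case reduces to this by the involution $x\mapsto 1-x$, which is an isomorphism $T\to\overline{T}$. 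When $C$ is the bull, I would place $\{u,v\}$ onto the pendant edge of the bull and pick $y,z\in(1-u,1-v]\cap\mathbb{Q}$ (distinct, with $y>u$) and a second pendant $b\in(1-y,\min(1-u,1-z)]\cap\mathbb{Q}$. The hard part will be the case analysis over the possible positions of $\{u,v\}$ inside $C$ and the sign of the disturbance, combined with repeated use of density of $\mathbb{Q}$ and local finiteness to pick each successive auxiliary vertex outside the finitely many pairs disturbed at already-chosen vertices.
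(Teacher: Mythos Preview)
Your approach is essentially the paper's: the torero graph is $P_4$-free (the paper argues by a direct contradiction on the threshold inequalities, equivalent to your nested-neighbourhood observation), the core is embedded after a disturbance by picking successive vertices from rational subintervals determined by the threshold condition, and the extension from the core to $H$ uses rationals near $0$ and $1$ exactly as you describe. Two minor differences are worth noting. First, the paper only constructs the bull: since $P_4$ is induced in the bull this covers both cores at once and halves your case analysis. Second, rather than your involution $x\mapsto 1-x$, the paper handles edge-removal and edge-addition by parallel direct constructions, placing the disturbed pair at the non-edge $bx$ (respectively the edge $by$) of the bull; your involution is a clean shortcut that works precisely because both $P_4$ and the bull are self-complementary. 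One small slip: in your $P_4$ embedding the interval for $a$ should be $(\max(v,1-v),\,1-u]$ rather than $(1-v,1-u]$, since you also need $a>v$ to make the interval for $b$ nonempty.
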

\begin{proof}
We prove this theorem in three parts. First, we show that there is no induced copy of $P_4$ in the torero graph (and hence no copy of the bull graph). Then we show that any locally finite modification of the torero graph contains an induced copy of the bull graph (and hence an induced copy of $P_4$). Finally, we observe that the torero graph is strongly-$H$-induced-saturated when the $(1,1)$-core is a copy of the bull graph or $P_4$.

Label the vertices of $P_4$ by $v_1, \dots, v_4$ so that the edges are $v_1v_2$, $v_2v_3$ and $v_3v_4$. Since $v_1v_2$ is an edge and $v_1v_3$ is not an edge, $v_1 + v_2 > 1 > v_1 + v_3$. In particular, $v_2 > v_3$. However,  since $v_3v_4$ is an edge and $v_2v_4$ is not an edge, we have $v_2 < v_3$, a contradiction.

Suppose that we remove an edge $bx$ from the torero graph where $b < x$. Since this is normally an edge, we have that $b + x >  1$. There is therefore some $y \in \mathbb{Q}$ such that  $b < y < x$ and $b + y > 1$. Now choose $a$ and $z$ such that $a + y < 1 < a + x $ and $z + b < 1 < z + y$, which is possible since $b < y < x$. This gives an induced copy of the bull when we remove a single edge from the torero graph. Since at each step we are choosing any rational in an interval, of which there are infinitely many, we can always choose a vertex such that no edge/non-edge to any of the preceding vertices has been perturbed. This means that we can also manage all locally finite edits.
Adding an edge follows a similar argument where the new edge forms the edge $by$ in the bull. Diagrams of both cases can be seen in Figure \ref{fig:bull-constructions}.

\begin{figure}
    \centering
    \begin{subfigure}{0.45 \textwidth}
        \centering
        \begin{tikzpicture}[xscale=6]
            \draw (0.85, 0.5) node(0)[style=vertex]{};
            \draw (0.85, 1) node[](l0){$x$};
            \draw (0.6, -0.5) node(1)[style=vertex]{};
            \draw (0.6,1) node[](l1){$y$};
            \draw (0.45, 0) node(2)[style=vertex]{};
            \draw (0.45, 1) node[](l2){$z$};
            \draw (0.2,0.4) node(3)[style=vertex]{};
            \draw (0.2, 1) node[](l3){$a$};
            \draw (0.35, -0.5) node(4)[style=vertex]{};
            \draw (0.35, 1) node[](l4){$b$};
        
            \draw (0) -- (1);
            \draw (0) -- (2);
            \draw (1) -- (2);
            \draw (0) -- (3);
            \draw (1) -- (4);
            \draw[dashed] (0) -- (4);
            
            \draw[style=help lines, dashed] (0.85,-1) -- (l0);
            \draw[style=help lines, dashed] (0.6,-1) -- (l1);
            \draw[style=help lines, dashed] (0.45,-1) -- (l2);
            \draw[style=help lines, dashed] (0.2,-1) -- (l3);
            \draw[style=help lines, dashed] (0.35,-1) -- (l4);
            
            \draw (0, -1) node[below]{0} -- (0.25, -1) node[below]{}  -- (0.5, -1) node[below]{} -- (0.75, -1) node[below]{} -- (1, -1) node[below]{$1$};
            \draw[style=help lines] (0,-1) -- (0,1);
            \draw[style=help lines] (1,-1) -- (1,1);
        \end{tikzpicture}
        \caption{}
    \end{subfigure}
    \begin{subfigure}{0.45 \textwidth}
        \centering
        \begin{tikzpicture}[xscale=6]
            \draw (0.85, 0.5) node(0)[style=vertex]{};
            \draw (0.85, 1) node[](l0){$x$};
            \draw (0.3, -0.5) node(1)[style=vertex]{};
            \draw (0.3,1) node[](l1){$y$};
            \draw (0.75, 0) node(2)[style=vertex]{};
            \draw (0.75, 1) node[](l2){$z$};
            \draw (0.2,0.4) node(3)[style=vertex]{};
            \draw (0.2, 1) node[](l3){$a$};
            \draw (0.1, -0.5) node(4)[style=vertex]{};
            \draw (0.1, 1) node[](l4){$b$};
        
            \draw (0) -- (1);
            \draw (0) -- (2);
            \draw (1) -- (2);
            \draw (0) -- (3);
            \draw[dashed] (1) -- (4);
            
            \draw[style=help lines, dashed] (0.85,-1) -- (l0);
            \draw[style=help lines, dashed] (0.3,-1) -- (l1);
            \draw[style=help lines, dashed] (0.75,-1) -- (l2);
            \draw[style=help lines, dashed] (0.2,-1) -- (l3);
            \draw[style=help lines, dashed] (0.1,-1) -- (l4);
            
            \draw (0, -1) node[below]{0} -- (0.25, -1) node[below]{}  -- (0.5, -1) node[below]{} -- (0.75, -1) node[below]{} -- (1, -1) node[below]{$1$};
            \draw[style=help lines] (0,-1) -- (0,1);
            \draw[style=help lines] (1,-1) -- (1,1);
        \end{tikzpicture}
        \caption{}
    \end{subfigure}
    \caption{We show in (a) how to create a bull when the edge $bx$ is removed and  we show in (b) how to create a bull when $by$ is added.}
    \label{fig:bull-constructions}
\end{figure}
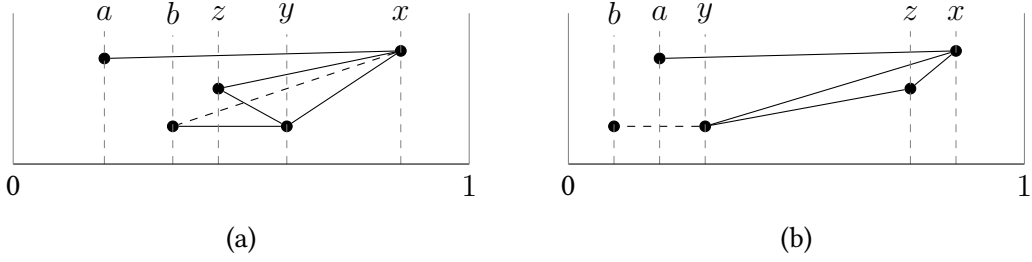

Finally, we explain how to handle any graph $H$ whose (1,1)-core is a copy of the bull graph or $P_4$. Since the torero graph has no induced copy of $P_4$, it also has no copy of $H$. To embed a copy of $H$ after a locally finite perturbation, we first embed a copy of the bull graph or $P_4$ as appropriate. The graph $H$ may be obtained from either the bull or $P_4$ by iteratively adding vertices connected to none of the preceding vertices, or to all of them. Given any set of vertices, there are infinitely many vertices which are connected to every vertex in the set (those sufficiently close to 1) and infinitely many vertices which are connected to none of the vertices (those sufficiently close to 0), and we can find a copy of $H$ by iteratively choosing such vertices.
\end{proof}

\subsection{Computer check for gatekeepers}\label{sec:computermagic}
Let us say that a 2-cut $\{u,v\}$ of $G$ is an \emph{edge-2-cut} if $\{u,v\}$ is an edge in $G$, and otherwise we say it is a \emph{non-edge-2-cut}.

Let $H$ be a 2-connected graph and let $e=xy$ be an edge of $H$. We show that if there is no non-edge-2-cut $\{u,v\}$ of $H$ such that a component of $H - \{u,v\}$ is an induced subgraph of $H-e$, then $e$ is a gatekeeper. 
Suppose towards a contradiction that the edge $e = xy$ is not a gatekeeper. Then there is some graph $G$ which does not contain a copy of $H$ such that gluing $H - e$ to $G$ by identifying $x$ and $y$ with the endpoints of a non-edge of $G$ creates a copy $H'$ of $H$. The vertices $x$ and $y$ are a $2$-cut of $H'$ 
and removing them splits $H'$ into components, say, $C_1, \dots, C_r$.  At least one of these components is disjoint from $G$, else $H'$ would be contained entirely in $G$. Let $u$ and $v$ be the vertices from $H$ that correspond to $x$ and $y$ in $H'$. Then the component contained in $H' -\{x,y\}$ corresponds to a  component of $H-\{u,v\}$ which is an induced subgraph of $H-e$. 

In fact, we can strengthen this condition slightly by including how a component of $H - \{u,v\}$ connects to the vertices $\{u,v\}$, and this is what we use in practice. That is, for each non-edge-2-cut $\{u,v\}$ which splits $H$ into components $C_1, \dots, C_r$ (say), we check if there is a copy of $H[V(C_i) \cup \{u,v\}]$ in $H-e$ where the vertices $\{u,v\}$ correspond to $\{x,y\}$ (either way round).
An example is given in \Cref{fig:cut-criterion}.

For the constructed graph to be \emph{strongly}-$H$-induced-saturated it is not sufficient to simply glue on a copy of $H-e$, even when $e$ is a gatekeeper. Instead, we will replace each vertex in $H-e$ except for $x$ and $y$ by either infinite cliques or independent sets. As seen in the proof of \Cref{lem:blowup}, this can be done if neither vertex is a true twin or if neither vertex is a false twin, and we check for this condition before we check if a particular edge or non-edge is a gatekeeper using the above method.

Let us summarise by describing the implementation.
First, the code checks that the graph is 2-connected and not a complete graph. Then all of the 2-cuts are enumerated and split into edge-2-cuts and non-edge-2-cuts. 
For each edge-2-cut $e=xy$, we check that $x$ and $y$ do not have twins of differing types, and we move onto the next edge-2-cut if they do. 
We then replace $e$ by a ``red'' edge, which serves to colour the two vertices $x$ and $y$.
We loop over the non-edge-2-cuts and, for each non-edge-2-cut $\{u,v\}$, we find the connected components of $H - \{u,v\}$.
For each connected component, $C$, we form the subgraph $H[V(C) \cup \{u,v\}]$ and add in a red edge between $u$ and $v$.
If this subgraph is isomorphic (including edge colours) to a subgraph of $H$, then we move onto another edge-2-cut.
If we did not find such a subgraph for any choice of non-edge-2-cut and connected component, then we have found the suitable fixing operation and we move onto non-edge-2-cuts, which are handled similarly.

This gives us a method to look for fixing operations in a given $2$-connected graph, but we can combine this with \Cref{lem:handle_core} to cover many more graphs. Indeed, suppose that the $k$-core of $H$, which we denote $H'$, has a fixing operation (potentially found by the method above) for the class of $H'$-free graphs. Then, by repeatedly applying \Cref{lem:handle_core}, the graph $H$ also admits a fixing operation for the class of $H'$-free graphs and, in particular, is strongly saturating.
Using the third and fourth parts of \Cref{lem:handle_core}, we can also generalise this to what we call the 2-edge-core and the 2-non-edge-core. 
The \emph{2-edge-core} (resp. \emph{2-non-edge-core}) of a graph is formed by repeatedly removing vertices of degree less than $2$ and vertices of degree 2 whose neighbours are adjacent (resp. non-adjacent). The 2-edge-core of a graph has minimum degree at least 2 and no vertex $v$ with degree $2$ whose neighbours are adjacent. Using \Cref{lem:handle_core} repeatedly, it follows that the graph $H$ admits a fixing operation if its 2-edge-core or 2-non-edge-core admits a fixing operation, as required for check \ref{it:7} above.

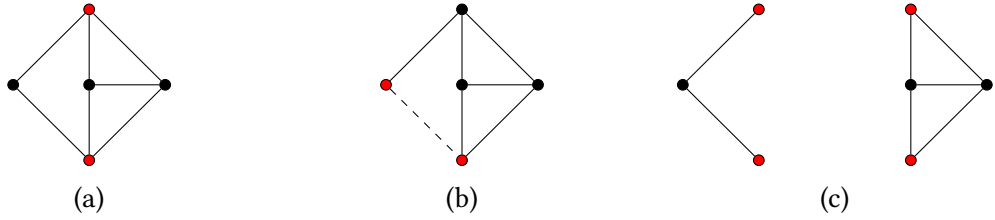
\begin{figure}
    \centering
    \begin{subfigure}{0.3\textwidth}
    \centering
        \begin{tikzpicture}
             \draw (-1, 0) node(0)[style=vertex]{};
             \draw (0, -1) node(1)[style=vertex, fill=red]{};
             \draw (0, 1) node(2)[style=vertex, fill=red]{};
             \draw (0, 0) node(3)[style=vertex]{};
             \draw (1, 0) node(4)[style=vertex]{};

             \draw (1) -- (3) -- (2) -- (0) -- (1) -- (4) -- (2);
             \draw (3) -- (4);
        \end{tikzpicture}
    \caption{}
    \end{subfigure}
            \begin{subfigure}{0.3\textwidth}
        \centering
        \begin{tikzpicture}
             \draw (-1, 0) node(0)[style=vertex, fill=red]{};
             \draw (0, -1) node(1)[style=vertex, fill=red]{};
             \draw (0, 1) node(2)[style=vertex]{};
             \draw (0, 0) node(3)[style=vertex]{};
             \draw (1, 0) node(4)[style=vertex]{};

            \draw (0) -- (2) -- (4) -- (1) -- (3) -- (2);
            \draw (3) -- (4);
            \draw[dashed] (0) -- (1);
        \end{tikzpicture}
    \caption{}
    \end{subfigure}
    \begin{subfigure}{0.3\textwidth}
    \centering
        \begin{tikzpicture}
             \draw (-1, 0) node(0a)[style=vertex]{};
             \draw (0, -1) node(1a)[style=vertex, fill=red]{};
             \draw (0, 1) node(2a)[style=vertex, fill=red]{};

             \draw (2a) -- (0a) -- (1a);

             \begin{scope}[shift={(2,0)}]
                \draw (0, -1) node(1b)[style=vertex, fill=red]{};
                \draw (0, 1) node(2b)[style=vertex, fill=red]{};
                \draw (0, 0) node(3b)[style=vertex]{};
                \draw (1, 0) node(4b)[style=vertex]{};
                \draw (3b) -- (2b) -- (4b) -- (1b) -- (3b) -- (4b);
             \end{scope}
        \end{tikzpicture}
    \caption{}
    \end{subfigure}

    \caption{
    (a)~The graph \texttt{Dr[} with its only 2-cut highlighted in red. There are no 2-cuts which are edges, so every non-edge is a gatekeeper.
    (b)~The graph with the edge $xy$ removed, with $x$ and $y$ highlighted in red. 
    (c)~The two fragments of the graph created by taking the components of the only 2-cut and adding back in the 2-cut. Observe that there is no copy of either of the fragments in (b) with matching vertex colours, which means that $xy$ is a gatekeeper.    }
    \label{fig:cut-criterion}
\end{figure}

\subsection{The rational geometric graph}\label{sec:circ}
The \emph{rational geometric graph} is the graph on $\mathbb{Q}$ where there is an edge between $q$ and $r$ if and only if $|q - r| < \pi$. Our main interest in this construction is to handle the three problematic small graphs shown in Figure \ref{fig:circulant_graphs}. First, we make the easy observation that the neighbourhood of any vertex $v$ in the rational geometric graph can be partitioned into two cliques (where there may be edges between the cliques). In particular, this immediately shows that none of the graphs in Figure \ref{fig:circulant_graphs} are induced subgraphs of the rational geometric graph.

\begin{figure}[ht!]
    \centering
            \begin{subfigure}{0.3\textwidth}
        \centering
        \begin{tikzpicture}
            \draw (70:0.5) node(a)[style=vertex]{};
            \draw (110:0.5) node(b)[style=vertex]{};
            \draw (0,0) node(c)[style=vertex, fill=red]{};
            \draw (-0.5,-0.866) node(d)[style=vertex]{};
            \draw (0.5, -0.866) node(e)[style=vertex]{};
            \draw (1, -0.866) node(f)[style=vertex]{};
            
            \draw (a)  -- (c);
            \draw (b)  -- (c);
            \draw (c)  -- (d);
            \draw (c)  -- (e);
            \draw (d)  -- (e);
            \draw (e)  -- (f);
        \end{tikzpicture}
        \caption{\texttt{E?qw}}
\end{subfigure}
    \begin{subfigure}{0.3\textwidth}
    \centering
    \begin{tikzpicture}
        \draw (0,0) node(a)[style=vertex, fill=red]{};
        \draw (0,1) node(b)[style=vertex]{};
        \draw (1,0) node(c)[style=vertex]{};
        \draw (1,1) node(d)[style=vertex]{};
        \draw (0.5, 1.866) node(e)[style=vertex]{};
        \draw (200:0.5) node(f)[style=vertex]{};
        \draw (250:0.5) node(g)[style=vertex]{};
        
        \draw (a)  -- (c);
        \draw (a)  -- (b);
        \draw (a)  -- (d);
        \draw (a)  -- (c);
        \draw (a)  -- (f);
        \draw (a)  -- (g);
        \draw (c)  -- (d);
        \draw (b)  -- (d);
        \draw (b)  -- (c);
        \draw (b)  -- (e);
        \draw (d)  -- (e);
    \end{tikzpicture}
    \caption{\texttt{F?rLw}}
    \end{subfigure}
    \begin{subfigure}{0.3\textwidth}
    \centering
        \begin{tikzpicture}
            \draw (0,0) node(a)[style=vertex]{};
            \draw (0,1) node(b)[style=vertex, fill=red]{};
            \draw (1,0) node(c)[style=vertex]{};
            \draw (1,1) node(d)[style=vertex]{};
            \draw (0.5, 1.866) node(e)[style=vertex]{};
            \draw (200:0.5) node(f)[style=vertex]{};
            \draw (0,1) ++ (200:0.5) node(g)[style=vertex]{};
            
            \draw (a)  -- (c);
            \draw (a)  -- (b);
            \draw (a)  -- (d);
            \draw (a)  -- (c);
            \draw (a)  -- (f);
            \draw (b)  -- (g);
            \draw (c)  -- (d);
            \draw (b)  -- (d);
            \draw (b)  -- (c);
            \draw (b)  -- (e);
            \draw (d)  -- (e);
        \end{tikzpicture}
        \caption{\texttt{F?S|w}}
        \end{subfigure}

\caption{The three problematic graphs which we handle using the rational geometric graph. The vertices highlighted in red have neighbourhoods which cannot be split into two cliques.}
\label{fig:circulant_graphs}
\end{figure}
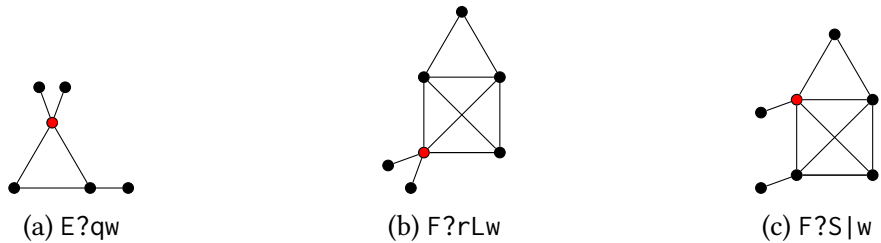

To handle these graphs, what remains to show is that making any locally finite edit to the rational geometric graph creates an induced copy. Let us first consider making just a single change, either swapping an edge to a non-edge or a non-edge to an edge. By the symmetry of the rational geometric graph, we can assume the edge/non-edge is from $0$ to some $r > 0$, where $r < \pi$ if it is an edge and $r > \pi$ if it is a non-edge. It is straightforward to find a sequence of intervals from which we can choose the vertices of our small graphs, but in the interest of conciseness we only sketch the constructions in Figure \ref{fig:circulant_2}. 

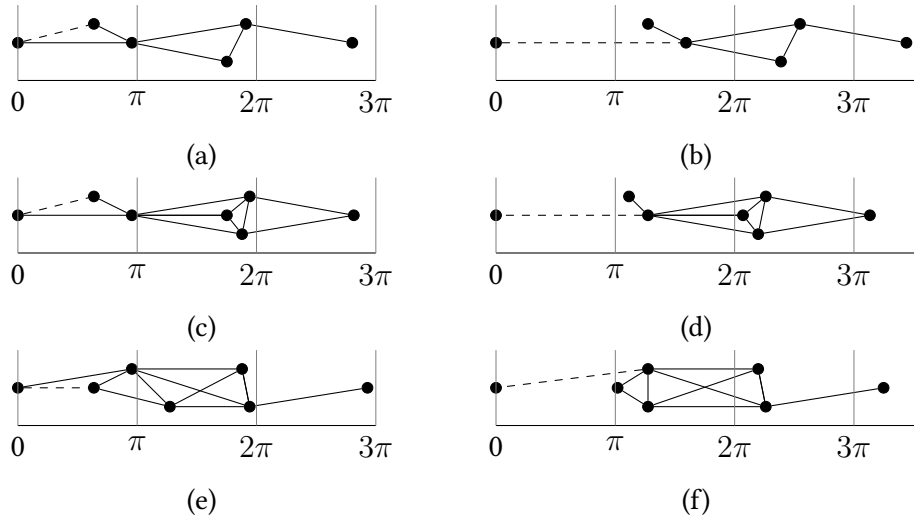
\begin{figure}[ht!]
    \centering
        \begin{subfigure}{0.4\textwidth}
        \centering
        \begin{tikzpicture}[scale=0.5]
            \draw (0,0) node(a)[style=vertex]{};
            \draw (2,0.5) node(b)[style=vertex]{};
            \draw (3,0) node(c)[style=vertex]{};
            \draw (5.5,-0.5) node(d)[style=vertex]{};
            \draw (6, 0.5) node(e)[style=vertex]{};
            \draw (8.8, 0) node(f)[style=vertex]{};
            
            \draw[dashed] (a) -- (b);
            \draw (a)  -- (c);
            \draw (b)  -- (c);
            \draw (c)  -- (d);
            \draw (c)  -- (e);
            \draw (d)  -- (e);
            \draw (e)  -- (f);
            \draw (0, -1) node[below]{0} -- (3.14, -1) node[below]{$\pi$}  -- (6.28, -1) node[below]{$2\pi$} -- (9.42, -1) node[below]{$3\pi$};
            \draw[style=help lines] (0,-1) -- (0,1);
            \draw[style=help lines] (3.14,-1) -- (3.14,1);
            \draw[style=help lines] (6.28,-1) -- (6.28,1);
            \draw[style=help lines] (9.42,-1) -- (9.42,1);
        \end{tikzpicture}
        \caption{} 
    \end{subfigure}
    \begin{subfigure}{0.4\textwidth}
        \centering
        \begin{tikzpicture}[scale=0.5]
            \draw (0,0) node(a)[style=vertex]{};
            \draw (4,0.5) node(b)[style=vertex]{};
            \draw (5,0) node(c)[style=vertex]{};
            \draw (7.5,-0.5) node(d)[style=vertex]{};
            \draw (8, 0.5) node(e)[style=vertex]{};
            \draw (10.8, 0) node(f)[style=vertex]{};
            
            \draw[dashed] (a)  -- (c);
            \draw (b)  -- (c);
            \draw (c)  -- (d);
            \draw (c)  -- (e);
            \draw (d)  -- (e);
            \draw (e)  -- (f);
            \draw (0, -1) node[below]{0} -- (3.14, -1) node[below]{$\pi$}  -- (6.28, -1) node[below]{$2\pi$} -- (9.42, -1) node[below]{$3\pi$} -- (11,-1);
            \draw[style=help lines] (0,-1) -- (0,1);
            \draw[style=help lines] (3.14,-1) -- (3.14,1);
            \draw[style=help lines] (6.28,-1) -- (6.28,1);
            \draw[style=help lines] (9.42,-1) -- (9.42,1);
        \end{tikzpicture}
        \caption{} 
        \end{subfigure}
    \begin{subfigure}{0.4 \textwidth}
        \centering
        \begin{tikzpicture}[scale=0.5]
            \draw (3,0) node(a)[style=vertex]{};
            \draw (5.9,-0.5) node(b)[style=vertex]{};
            \draw (5.5,0) node(c)[style=vertex]{};
            \draw (6.1,0.5) node(d)[style=vertex]{};
            \draw (8.84,0) node(e)[style=vertex]{};
            \draw (0, 0) node(f)[style=vertex]{};
            \draw (2,0.5) node(g)[style=vertex]{};
            \draw (a)  -- (c);
            \draw (a)  -- (b);
            \draw (a)  -- (d);
            \draw (a)  -- (c);
            \draw (a)  -- (f);
            \draw (a)  -- (g);
            \draw (c)  -- (d);
            \draw (b)  -- (d);
            \draw (b)  -- (c);
            \draw (b)  -- (e);
            \draw (d)  -- (e);
            \draw[dashed] (f) -- (g);
            
            \draw (0, -1) node[below]{0} -- (3.14, -1) node[below]{$\pi$}  -- (6.28, -1) node[below]{$2\pi$} -- (9.42, -1) node[below]{$3\pi$};
            \draw[style=help lines] (0,-1) -- (0,1);
            \draw[style=help lines] (3.14,-1) -- (3.14,1);
            \draw[style=help lines] (6.28,-1) -- (6.28,1);
            \draw[style=help lines] (9.42,-1) -- (9.42,1);
        \end{tikzpicture}
        \caption{}
    \end{subfigure}
    \begin{subfigure}{0.4 \textwidth}
        \centering
        \begin{tikzpicture}[scale=0.5]
            \draw (4,0) node(a)[style=vertex]{};
            \draw (6.9,-0.5) node(b)[style=vertex]{};
            \draw (6.5,0) node(c)[style=vertex]{};
            \draw (7.1,0.5) node(d)[style=vertex]{};
            \draw (9.84,0) node(e)[style=vertex]{};
            \draw (0, 0) node(f)[style=vertex]{};
            \draw (3.5,0.5) node(g)[style=vertex]{};
            \draw (a)  -- (c);
            \draw (a)  -- (b);
            \draw (a)  -- (d);
            \draw (a)  -- (c);
            \draw[dashed] (a)  -- (f);
            \draw (a)  -- (g);
            \draw (c)  -- (d);
            \draw (b)  -- (d);
            \draw (b)  -- (c);
            \draw (b)  -- (e);
            \draw (d)  -- (e);
            
            \draw (0, -1) node[below]{0} -- (3.14, -1) node[below]{$\pi$}  -- (6.28, -1) node[below]{$2\pi$} -- (9.42, -1) node[below]{$3\pi$} -- (11,-1);
            \draw[style=help lines] (0,-1) -- (0,1);
            \draw[style=help lines] (3.14,-1) -- (3.14,1);
            \draw[style=help lines] (6.28,-1) -- (6.28,1);
            \draw[style=help lines] (9.42,-1) -- (9.42,1);
        \end{tikzpicture}
        \caption{}
    \end{subfigure}
    \begin{subfigure}{0.4 \textwidth}
        \centering
        \begin{tikzpicture}[scale=0.5]
            \draw (6.1,-0.5) node(a)[style=vertex]{};
            \draw (3,0.5) node(b)[style=vertex]{};
            \draw (5.9,0.5) node(c)[style=vertex]{};
            \draw (4,-0.5) node(d)[style=vertex]{};
            \draw (2,0) node(e)[style=vertex]{};
            \draw (9.2, 0) node(f)[style=vertex]{};
            \draw (0,0) node(g)[style=vertex]{};
            \draw (a)  -- (c);
            \draw (a)  -- (b);
            \draw (a)  -- (d);
            \draw (a)  -- (c);
            \draw (a)  -- (f);
            \draw (b)  -- (g);
            \draw (c)  -- (d);
            \draw (b)  -- (d);
            \draw (b)  -- (c);
            \draw (b)  -- (e);
            \draw (d)  -- (e);
            \draw[dashed] (g) -- (e);
            
            \draw (0, -1) node[below]{0} -- (3.14, -1) node[below]{$\pi$}  -- (6.28, -1) node[below]{$2\pi$} -- (9.42, -1) node[below]{$3\pi$};
            \draw[style=help lines] (0,-1) -- (0,1);
            \draw[style=help lines] (3.14,-1) -- (3.14,1);
            \draw[style=help lines] (6.28,-1) -- (6.28,1);
            \draw[style=help lines] (9.42,-1) -- (9.42,1);
        \end{tikzpicture}
        \caption{}
    \end{subfigure}
    \begin{subfigure}{0.4 \textwidth}
        \centering
        \begin{tikzpicture}[scale=0.5]
            \draw (7.1,-0.5) node(a)[style=vertex]{};
            \draw (4,0.5) node(b)[style=vertex]{};
            \draw (6.9,0.5) node(c)[style=vertex]{};
            \draw (4,-0.5) node(d)[style=vertex]{};
            \draw (3.2,0) node(e)[style=vertex]{};
            \draw (10.2, 0) node(f)[style=vertex]{};
            \draw (0,0) node(g)[style=vertex]{};
            \draw (a)  -- (c);
            \draw (a)  -- (b);
            \draw (a)  -- (d);
            \draw (a)  -- (c);
            \draw (a)  -- (f);
            \draw[dashed] (b)  -- (g);
            \draw (c)  -- (d);
            \draw (b)  -- (d);
            \draw (b)  -- (c);
            \draw (b)  -- (e);
            \draw (d)  -- (e);
            
            \draw (0, -1) node[below]{0} -- (3.14, -1) node[below]{$\pi$}  -- (6.28, -1) node[below]{$2\pi$} -- (9.42, -1) node[below]{$3\pi$}  -- (11,-1);
            \draw[style=help lines] (0,-1) -- (0,1);
            \draw[style=help lines] (3.14,-1) -- (3.14,1);
            \draw[style=help lines] (6.28,-1) -- (6.28,1);
            \draw[style=help lines] (9.42,-1) -- (9.42,1);
        \end{tikzpicture}
        \caption{}
    \end{subfigure}
    \caption{The left-hand side shows the constructions when removing an edge on the dotted line and the right-hand side shows the constructions when adding the edge indicated by the dotted line. }
    \label{fig:circulant_2}
\end{figure}
Again, since we may choose from infinitely many vertices at any stage, we can always choose one such that none of the edges/non-edges to the proceeding vertices have been altered.  
\subsection{The final graph}\label{sec:z3}
There is one final graph we need to handle, shown in Figure \ref{fig:final-graph}.
\begin{figure}[ht!]
    \centering
    \begin{tikzpicture}
        \draw (-1.5,0.5) node(0)[style=vertex]{};
        \draw (0.5, -0.5) node(1)[style=vertex]{};
        \draw (0,1) node(2)[style=vertex]{};
        \draw (-1.5,-0.5) node(3)[style=vertex]{};
        \draw (0.5, 0.5) node(4)[style=vertex]{};
        \draw (-0.5, -0.5) node(5)[style=vertex]{};
        \draw (-0.5, 0.5) node(6)[style=vertex]{};
        
        \draw (0) -- (5);
        \draw (0) -- (6);
        \draw (1) -- (4);
        \draw (1) -- (5);
        \draw (1) -- (6);
        \draw (2) -- (4);
        \draw (2) -- (6);
        \draw (3) -- (5);
        \draw (3) -- (6);
        \draw (4) -- (5);
        \draw (4) -- (6);
        \draw (5) -- (6);
        
    \end{tikzpicture}
    \caption{The final graph \texttt{F?q\textasciitilde{}w}.}
    \label{fig:final-graph}
\end{figure}

We first give an auxiliary construction and then blow this up to allow for locally finite perturbations. Let $G$ be the graph with vertex set $\mathbb{Z}^3$.  We join the vertex $(i,j, k)$ to  $(a,b,c)$ in $G$ if they agree in at least one coordinate, i.e. $i = a$, $j = b$ or $k = c$. There are two different kinds of edges $uv$ in $G$: those where $u$ and $v$ agree in exactly one coordinate and those where $u$ and $v$ agree in exactly two coordinates. There is one type of non-edge. 

The graph $G'$ that we are interested in is obtained by blowing up each vertex of $G$ into an infinite clique (replacing edges by complete bipartite graphs).
We can split the neighbours of any vertex of $G'$ into three cliques based on which coordinate they agree on in $G$, which shows that $G'$ does not contain a copy of $H$. 

Next, we show that making any locally finite perturbation creates a copy of $H$. We again first show that a single perturbation in $G$ results in a copy of $H$ in Figure \ref{fig:final-graph-constructions}.
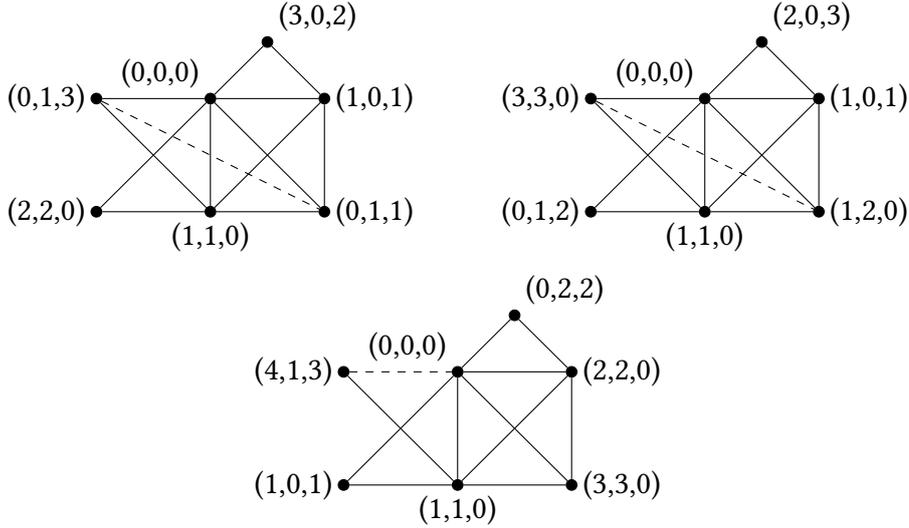
\begin{figure}[ht!]
    \centering
    \begin{subfigure}{0.4\textwidth}
    \begin{tikzpicture}[scale=1.5]
        \draw (-1.5,0.5) node(0)[style=vertex]{};
        \draw (-1.5,0.5) node[left]{(0,1,3)};
        \draw (0.5, -0.5) node(1)[style=vertex]{};
        \draw (0.5, -0.5) node[right]{(0,1,1)};
        \draw (0,1) node(2)[style=vertex]{};
        \draw (0,1) node[above right]{(3,0,2)};
        \draw (-1.5,-0.5) node(3)[style=vertex]{};
        \draw (-1.5,-0.5) node[left]{(2,2,0)};
        \draw (0.5, 0.5) node(4)[style=vertex]{};
        \draw (0.5, 0.5) node[right]{(1,0,1)};
        \draw (-0.5, -0.5) node(5)[style=vertex]{};
        \draw (-0.5, -0.5) node[below]{(1,1,0)};
        \draw (-0.5, 0.5) node(6)[style=vertex]{};
        \draw (-0.5, 0.5) node[above left]{(0,0,0)};
        
        \draw (0) -- (5);
        \draw (0) -- (6);
        \draw (1) -- (4);
        \draw (1) -- (5);
        \draw (1) -- (6);
        \draw (2) -- (4);
        \draw (2) -- (6);
        \draw (3) -- (5);
        \draw (3) -- (6);
        \draw (4) -- (5);
        \draw (4) -- (6);
        \draw (5) -- (6);
        \draw[dashed] (0) -- (1);
        
    \end{tikzpicture}
    \end{subfigure}
    \begin{subfigure}{0.4\textwidth}
    \begin{tikzpicture}[scale=1.5]
        \draw (-1.5,0.5) node(0)[style=vertex]{};
        \draw (-1.5,0.5) node[left]{(3,3,0)};
        \draw (0.5, -0.5) node(1)[style=vertex]{};
        \draw (0.5, -0.5) node[right]{(1,2,0)};
        \draw (0,1) node(2)[style=vertex]{};
        \draw (0,1) node[above right]{(2,0,3)};
        \draw (-1.5,-0.5) node(3)[style=vertex]{};
        \draw (-1.5,-0.5) node[left]{(0,1,2)};
        \draw (0.5, 0.5) node(4)[style=vertex]{};
        \draw (0.5, 0.5) node[right]{(1,0,1)};
        \draw (-0.5, -0.5) node(5)[style=vertex]{};
        \draw (-0.5, -0.5) node[below]{(1,1,0)};
        \draw (-0.5, 0.5) node(6)[style=vertex]{};
        \draw (-0.5, 0.5) node[above left]{(0,0,0)};
        
        \draw (0) -- (5);
        \draw (0) -- (6);
        \draw (1) -- (4);
        \draw (1) -- (5);
        \draw (1) -- (6);
        \draw (2) -- (4);
        \draw (2) -- (6);
        \draw (3) -- (5);
        \draw (3) -- (6);
        \draw (4) -- (5);
        \draw (4) -- (6);
        \draw (5) -- (6);
        \draw[dashed] (0) -- (1);
        
    \end{tikzpicture}
    \end{subfigure}
    \begin{subfigure}{0.4\textwidth}
    \begin{tikzpicture}[scale=1.5]
        \draw (-1.5,0.5) node(0)[style=vertex]{};
        \draw (-1.5,0.5) node[left]{(4,1,3)};
        \draw (0.5, -0.5) node(1)[style=vertex]{};
        \draw (0.5, -0.5) node[right]{(3,3,0)};
        \draw (0,1) node(2)[style=vertex]{};
        \draw (0,1) node[above right]{(0,2,2)};
        \draw (-1.5,-0.5) node(3)[style=vertex]{};
        \draw (-1.5,-0.5) node[left]{(1,0,1)};
        \draw (0.5, 0.5) node(4)[style=vertex]{};
        \draw (0.5, 0.5) node[right]{(2,2,0)};
        \draw (-0.5, -0.5) node(5)[style=vertex]{};
        \draw (-0.5, -0.5) node[below]{(1,1,0)};
        \draw (-0.5, 0.5) node(6)[style=vertex]{};
        \draw (-0.5, 0.5) node[above left]{(0,0,0)};
        
        \draw (0) -- (5);
        \draw[dashed] (0) -- (6);
        \draw (1) -- (4);
        \draw (1) -- (5);
        \draw (1) -- (6);
        \draw (2) -- (4);
        \draw (2) -- (6);
        \draw (3) -- (5);
        \draw (3) -- (6);
        \draw (4) -- (5);
        \draw (4) -- (6);
        \draw (5) -- (6);
        
    \end{tikzpicture}
    \end{subfigure}
    \caption{Examples of how to embed $H$ into a copy of $G$ when an edge of $G$ has been perturbed. The dotted lines represent the location of the edge which was removed (top) or added (bottom).}
    \label{fig:final-graph-constructions}
\end{figure}

Suppose now that $v_1'v_2'$ is perturbed in a locally finite perturbation of  $G'$ where $v_1'$ and $v_2'$ correspond to different vertices $v_1$ and $v_2$ in $G$. Then a copy of $H$ will still be created. The argument for this is similar to previous constructions: if $v_1,\dots,v_k$ are the vertices in $G$ on which a copy of $H$ is created after perturbing $v_1v_2$ in $G$, then we may iteratively choose $v_i'$ corresponding to $v_i$ such that the edges/non-edges from $v_i'$ to $v_1',\dots,v_{i-1}'$ have not been adjusted.

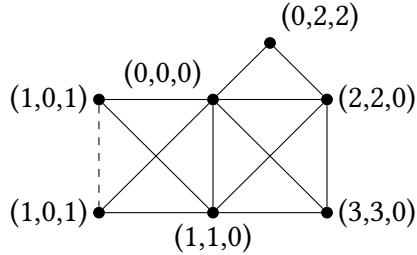
\begin{figure}[ht!]
\centering
    \begin{tikzpicture}[scale=1.5]
        \draw (-1.5,0.5) node(0)[style=vertex]{};
        \draw (-1.5,0.5) node[left]{(1,0,1)};
        \draw (0.5, -0.5) node(1)[style=vertex]{};
        \draw (0.5, -0.5) node[right]{(3,3,0)};
        \draw (0,1) node(2)[style=vertex]{};
        \draw (0,1) node[above right]{(0,2,2)};
        \draw (-1.5,-0.5) node(3)[style=vertex]{};
        \draw (-1.5,-0.5) node[left]{(1,0,1)};
        \draw (0.5, 0.5) node(4)[style=vertex]{};
        \draw (0.5, 0.5) node[right]{(2,2,0)};
        \draw (-0.5, -0.5) node(5)[style=vertex]{};
        \draw (-0.5, -0.5) node[below]{(1,1,0)};
        \draw (-0.5, 0.5) node(6)[style=vertex]{};
        \draw (-0.5, 0.5) node[above left]{(0,0,0)};
        
        \draw (0) -- (5);
        \draw (0) -- (6);
        \draw (1) -- (4);
        \draw (1) -- (5);
        \draw (1) -- (6);
        \draw (2) -- (4);
        \draw (2) -- (6);
        \draw (3) -- (5);
        \draw (3) -- (6);
        \draw (4) -- (5);
        \draw (4) -- (6);
        \draw (5) -- (6);
        \draw[dashed] (0) -- (3); 
        
    \end{tikzpicture}
    \caption{The construction shows how to obtain a copy of $H$ when an edge is removed between two vertices which come from the same vertex of $G$ (in this case, $(1,0,1)$).}
    \label{fig:final-graph-constructions2}
\end{figure}

We have also introduced a new type of edge in $G'$: an edge $u'u''$ where both $u'$ and $u''$ come from the infinite clique that corresponds to a single vertex $u$ of $G$. We indicate in Figure~\ref{fig:final-graph-constructions2} how to obtain a copy of $H$ in a locally finite perturbation that only affects edges of this type. 

\subsection{Proof of Theorem~\ref{thm:main}}\label{subsec:proof}
We now have all the components required to prove Theorem~\ref{thm:main}.

\begin{proof}[Proof of Theorem~\ref{thm:main}]
Suppose that $H$ is a finite graph on at least 12 vertices which is not a clique or independent set. 
By Theorem \ref{thm:cases}, either the graph $H$ or its complement $\overline{H}$ satisfies one of the following statements:
\begin{enumerate}
    \item $H$ is a forest with a unique vertex of maximum degree,
    \item the $2$-core of $H$ is $K_{2,p}$ for $p\geq 3$,
    \item the $2$-core of $H$ is $K_{1,1,p}$ for $p\geq 3$, or 
    \item the $3^*$-core of $H$ is 3-connected and not a clique.
\end{enumerate}
In the first and fourth cases, there exists a strongly $H$-induced-saturated graph by Lemma~\ref{lem:forest} and Corollary~\ref{cor:3starcore}, respectively. By Lemma~\ref{lem:K2p}, any graph in the second case admits a fixing operation for the class of $K_{2,p}$-free graphs. By Lemma~\ref{lem:K11p}, any graph in the third case admits a fixing operation for a particular class of $K_{1,1,p}$-free graphs. Applying Lemma~\ref{lem:fix_to_strong} in each of these cases, and noting that the complement of any strongly $H$-induced-saturated graph is a strongly $\overline{H}$-induced-saturated graph, completes the proof for any $H$ on at least 12 vertices.

For graphs on at most 11 vertices, our computer search identifies whether each graph $H$ or its complement satisfies any of the following conditions. 

\begin{enumerate}
    \item The graph $H$ is a non-empty forest with a unique vertex of maximum degree.
    \item The 2-core of $H$ is a copy $K_{2,p}$ with $p \geq 3$.
    \item The 2-core of $H$ is a copy of $K_{1,1,p}$ where $p \geq 2$.
    \item The $3$-core of $H$ is 3-connected and not a clique.
    \item The $(1,1)$-core of $H$ is a copy of $P_4$ or the bull graph.
    \item The graph $H$ is close to a permutation graph.
    \item The $2$-core, $3$-core, $2$-edge-core or $2$-non-edge-core have fixing operations.
\end{enumerate}
The first four cases follow for the same reasons they did for graphs on at least 12 vertices. The fifth and sixth cases are resolved by Theorems~\ref{thm:bull} and \ref{thm:upandright}, respectively. The last case is considered in \Cref{sec:computermagic}.
    There are 8 graphs (\texttt{E?qw}, \texttt{F?S|w}, \texttt{F?q|w} and \texttt{F?q~w}, and their complements) which do not fall into one of the above cases. The first six of these are resolved in Section~\ref{sec:circ}, and the final pair is resolved in Section~\ref{sec:z3}.
\end{proof}

\section{Open problems}\label{sec:ccl}
We have shown that a finite graph $H$ admits a countable $H$-induced-saturated graph if and only if $H$ is not a clique or independent set, and in fact the characterisation remains the same when asking for a much stronger notion of saturation. 

In the finite case, the picture is less clear. Indeed, the classification of the existence of finite $H$-induced-saturated graphs is still widely open. We do not know of any other example of a finite graph $H$, besides $P_4$, independent sets and cliques, for which no finite $H$-induced-saturated graph exists. Since cliques and independent sets are in some sense ``trivial'' examples for which no $H$-induced-saturated graph can exist, it would be very interesting to know if there are infinitely many non-trivial examples.
\begin{problem}
    Is there an infinite family $\mathcal{G}$ of finite graphs, not containing cliques or independent sets, such that for all $H\in \mathcal{G}$, there is no finite $H$-induced-saturated graph?
\end{problem}
An $H$-induced-saturated graph on $n$ vertices can be seen as an isolated vertex in the class of $H$-free graphs (where edges are placed between graphs at edit-distance 1). Instead of finding an isolated vertex, it may be easier to show that this graph is disconnected.
\begin{problem}
\label{prob:disconnected}
    For a finite graph $H$ and integer $n$, let $G_{H,n}$ denote the graph with the $H$-free graphs on vertex set $[n]$ as vertices, and with an edge between $G,G'\in V(G_{H,n})$ if and only if $|E(G)\triangle E(G')|=1$. For which $H$ is $G_{H,n}$ disconnected for all sufficiently large $n$?
\end{problem}
We remark that, from the inductive cograph definition, it is not too difficult to show that $G_{P_4,n}$ is connected (for all choices of $n\geq 1$). 

Many interesting directions in the infinite world also remain open. 
What about structures other than graphs? For example, with an appropriate notion of ``perturbation'', can a similar (or partial) characterisation be obtained for $k$-uniform hypergraphs, coloured graphs, matroids, posets or directed graphs?  For tournaments the natural operation is to reverse the direction of edges, which leads to the following problem.
\begin{problem}
    For which finite tournaments $H$, does there exist a countable tournament $G$ that does not contain $H$ as subtournament, yet any tournament obtained from $G$ by changing the direction of a single arc does contain $H$? 
\end{problem}
For example, when $H$ is a directed triangle then there is no finite example for $G$, but an infinite example can be given by taking the graph on vertex set $\mathbb{Q}$ where $q\to q'$ if and only if $q<q'$. For the transitive tournament on $3$ elements, the directed triangle is a finite example.
% for any total order, the arc between the last two elements in the order can be reversed, to give another total order
% the given example works since you can take r with q<r<q' and now q->r->q'->q

Perhaps the following analogue of Theorem \ref{thm:main} is true for the ``locally finite perturbation'' variant of this question.
\begin{conjecture}
Let $T$ be a finite tournament. If $T$ is not transitive, then there exists a countably infinite $T$-free tournament $S_T$ such that every locally finite perturbation of $S_T$ has an induced copy of $T$.
\end{conjecture}
Note that the locally finite perturbations enforce that $S_T$ is infinite. As any tournament on $2^n$ elements has a transitive subtournament on $n$ elements\footnote{This can be seen by greedily constructing such a transitive subtournament using the fact that each vertex has either at least half of the (remaining) vertices in its in-neighbourhood, or in its out-neighbourhood.}, the requirement that $T$ is not transitive is necessary. 

Similar questions arise for $k$-uniform hypergraphs, where a single perturbation is the addition or removal of an edge. Here, the strongest conjecture is the following.
\begin{conjecture}
Let $k\ge3$ be an integer, and let $H$ be a finite $k$-uniform hypergraph. If $H$ has both an edge and a non-edge, then there exists a countably infinite $k$-uniform hypergraph $G_H$ such that every locally finite perturbation of $G_H$ has an induced copy of $H$.
\end{conjecture}

It would also be interesting to prove analogous results for coloured structures.  Let $k\ge2$ be an integer, and consider a $k$-colouring $c:E(K)\to[k]$ of the edges of a (finite or infinite) complete graph $K$.  A {\em locally finite perturbation} of $c$ is a colouring with colours on some (nonempty) locally finite subgraph of $G$ changed.
\begin{conjecture}
Let $k\ge2$ be an integer, and let $c$ be a $k$-colouring
of the edges of a finite clique $K$ such that there is an edge of every colour.  Then there is a colouring $\chi$ of the edges of the complete countably infinite graph such that every locally finite perturbation contains a copy of $K$ with colouring $c$.
\end{conjecture}
Another direction is to show the existence (or non-existence) of graphs that are simultaneously (strongly) $H$-induced-saturated for all graphs $H$ in a specific class of graphs. We gave such a result for the class of forests which have a unique vertex of maximum degree, and also characterised in Theorem \ref{thm:upandright} for which graphs $H$ our ``up and right graph'' is strongly $H$-induced-saturated. What about the class of finite cographs?  Or finite graphs of bounded twinwidth? 

Finally, there could be analogues of our result for infinite $H$, for example, if $H$ is a graph of cardinality $\kappa$, when are there $H$-free graphs of cardinality $\lambda$ that satisfy our theorem with locally $\kappa$ perturbations? Could this be true whenever $H$ is neither complete nor empty, and $\lambda$ is larger than $\kappa$?

\paragraph{Acknowledgements.} We would like to thank Mathieu Rundstrom for helpful comments and questions that fixed some issues in an earlier version of this paper, Reinhard Diestel for helpful comments and Torsten Ueckerdt for suggesting Problem~\ref{prob:disconnected} and allowing us to include this here.

\bibliographystyle{style}
\bibliography{refs}

\end{document}